\documentclass[11pt]{amsart}
\usepackage{fullpage,amsmath, amssymb,amsfonts,amsmath,latexsym,amscd,amsthm, mathtools, stackrel,hyperref}
\usepackage{tikz-cd}
\usepackage[top=2cm, bottom=4.5cm, left=3cm, right=3cm]{geometry}
\usepackage{float, graphicx}
\usepackage{bm}
\graphicspath{ {./images/} }

\usepackage{tikz-cd}
\usepackage{tikz}
\usepackage{graphicx,subfigure}

\usepackage{enumerate}
\usepackage[shortlabels]{enumitem}

\newtheorem{theorem}{Theorem}[section]
\newtheorem{corollary}[theorem]{Corollary}
\newtheorem{proposition}[theorem]{Proposition}
\newtheorem{lemma}[theorem]{Lemma}
\newtheorem{claim}[theorem]{Claim}
\newtheorem{fact}[theorem]{Fact}

\newtheorem{construction}[theorem]{Construction}

\newtheorem{conjecture}[theorem]{Conjecture}

\theoremstyle{definition}

\newtheorem{remark}[theorem]{Remark}
\newtheorem{definition}[theorem]{Definition}

\makeatletter

\newcommand{\Bd}{\text{Bd}}
\newcommand{\Ch}{\text{Ch}}
\newcommand{\imp}{\text{Imp}}
\newcommand{\Ss}{\mathbb S}
\newcommand{\phat}{\hat{\varphi}}
\newcommand{\RS}{\hat{\C}}
\newcommand{\U}{\mathcal{U}}
\newcommand{\trdeg}{\text{trdeg}}
\newcommand{\ord}{\text{ord}}
\newcommand{\zar}{\text{Zar}}

\newcommand{\bpsi}{\mathbf{\Psi}}

\newcommand{\D}{\mathbb D}

\newcommand{\Z}{\mathbb Z}
\newcommand{\N}{{\mathbb N}}
\newcommand{\C}{\mathbb C}

\newcommand{\bP}{\mathbb P}

\newcommand{\bG}{\mathbb G}

\newcommand{\lra}{\longrightarrow}
\newcommand{\dra}{\dashrightarrow}
\newcommand{\CC}{\mathcal C}

\makeatletter

\title{Bialgebraic geometry of B\"ottcher coordinates}
\author{Sina Saleh}
\date{}

\begin{document}
\begin{abstract}
Becker and Bergweiler showed that if $f$ is a non-exceptional polynomial, then the B\"ottcher coordinate $\Psi_f \colon \D_R \to B_\infty(f)$ associated to $f$ is a transcendental function. In this paper, we study $f$-bialgebraic sets: algebraic subsets of $\D_R^n$ whose image under the coordinate-wise action of $\Psi_f$ is contained in an algebraic set of the same dimension.
 We give a complete dynamical classification of bialgebraic sets under the additional assumption that the Julia set of $f$ is either disconnected, or connected and admits a nondegenerate locally connected model. Inspired by the Ax--Lindemann--Weierstrass theorem and the Ax--Schanuel conjecture, we formulate analogs with $\Psi_f$ in place of the exponential function and prove them in the case where the Julia set $J_f$ is disconnected.
\end{abstract}
\maketitle

\section{Introduction}
Let $f$ be a polynomial of degree $d \ge 1$. Denote by $B_\infty(f)$ the basin of attraction of $\infty$, and by $J_f$ the Julia set of $f$. We assume that $\infty \in B_\infty(f)$ as well, and for each \(R>0\), let \(\mathbb{D}_R\) denote the open disk of radius \(R\). A classical theorem of B\"ottcher asserts that there exists $R \le 1$ and a conformal isomorphism
\[
\Psi_f \colon \mathbb{D}_R \longrightarrow U_\infty(f) \subset B_\infty(f)
\]
that conjugates $f|_{U_\infty(f)}$ to $z \mapsto z^d$ on $\mathbb{D}_R$. A theorem of Becker and Bergweiler \cite[Theorem 1]{Bottcher-transcendence} shows that if $f$ is \emph{non-exceptional}, that is if $f$ is not affine conjugate to a power map, a Chebyshev polynomial, or a negative Chebyshev polynomial, then $\Psi_f$ is transcendental over $\mathbb{C}(z)$. In view of this transcendence, one expects $\Psi_f$ to fail to preserve algebraic structure. For example, it is shown in \cite{Bottcher-transcendence} that if $f$ is non-exceptional and $\alpha \in \mathbb{D}_R$ is an algebraic number, then $\Psi_f(\alpha)$ is necessarily transcendental. One may also interpret this as a Lindemann-type theorem \cite{lindemann}, with $\Psi_f$ in place of the exponential map $e^z$. 

From a more algebro-geometric point of view, it is natural to consider the images of complex algebraic sets—namely, sets defined by finitely many complex polynomials—under the uniformizing map
\[
\Psi_{f,n} \colon \mathbb{D}_R^n \longrightarrow U_\infty(f)^n,
\qquad
(x_1,\dots,x_n) \longmapsto (\Psi_f(x_1),\dots,\Psi_f(x_n)).
\]
This parallels the study of images of algebraic sets under the uniformization
\[
\pi_n \colon \mathbb{C}^n \longrightarrow (\mathbb{C}^\ast)^n,
\qquad
(z_1,\dots,z_n) \longmapsto (\exp(z_1),\dots,\exp(z_n)),
\]
a setting that is well understood through the Ax--Schanuel theorem \cite{Ax-paper} and its consequences, such as the Ax--Lindemann--Weierstrass theorem and the classification of bialgebraic sets; see \cite[Theorems~1.2.6 and~1.2.7]{Bakker-Tsimerman-notes}. 

Given that $\Psi_f$ is transcendental when $f$ is non-exceptional, one expects the image of complex algebraic sets under $\bpsi_{f,n}$ to be non-algebraic. However, as the next list of examples shows, there are exceptions:
\begin{enumerate}
    \item The diagonal $C_1 := \{y = x\} \subset \D_R^2$. The image of $C_1$ under $(\Psi_f, \Psi_f)$ is also contained in the diagonal. 

    \item The curves $C_2:= \{x = a\}, C_3:=\{y = b\}$ where $a,b \in \D_R$. The images of $C_2$ and $C_3$ are contained in $\{x = \Psi_f(a)\}$ and $\{y = \Psi_f(b)\}$, respectively. 
\end{enumerate}
These examples show that this situation is subtler than \cite[Theorem 2]{Bottcher-transcendence} and motivate the following definition. We say that an algebraic branch $V \subset \mathbb{D}_R^n$ of dimension $r \ge 0$ is \textit{$f$-bialgebraic}, or simply \textit{bialgebraic}, if $\bpsi_{f,n}(V) \subset B_\infty(f)^n$ is an algebraic branch of a subvariety of $\C^n$ of dimension $r$. Here, by an \emph{algebraic branch of dimension \(r\)}, we mean an irreducible component, of dimension \(r\), of the analytic germ \((V',p)\), where \(V'\subset \mathbb{C}^n\) is an algebraic subvariety and \(p\in V'\). The examples above suggest a natural method for constructing such sets. To describe this construction, we introduce the notion of an \textit{$f$-special} subset of $\C^n$.

We say that an algebraic set $W \subset \C^n$ is \textit{$f$-special} if there exists a subset of coordinates $\mathcal{I}=\{i_1,\dots,i_s\} \subseteq \{1,\dots,n\},$
for some $s \ge 0$, such that $W$ is contained in a fiber of the projection to the complementary coordinates, and the projection $\pi_\mathcal{I}(W)$
to the coordinates indexed by $\mathcal{I}$ is preperiodic under the map $F_s \colon \C^s \to \C^s,$
\[
\qquad
F_s(x_1,\dots,x_s) = \bigl(f(x_1),\dots,f(x_s)\bigr).
\]
That is, there exist integers $0 \le N < M$ such that
\[
F_s^{N}(\pi_\mathcal{I}(W)) = F_s^{M}(\pi_\mathcal{I}(W)).
\]

This definition produces a large supply of bialgebraic sets. Indeed, if $W \subset \C^n$ is an $f$-special subvariety, and $W_1$ is a branch of $W$ contained in $W \cap U_\infty(f)^n$, then
\[
(\Psi_f^{-1},\dots,\Psi_f^{-1})(W_1) \subset \mathbb{D}_R^n
\]
is $f$-bialgebraic; see Proposition~\ref{prop:f-special-gives-bialg}. 

Our first main result shows that if $J_f$ is either connected and admits a non-degenerate locally connected model (see Section~\ref{sec:Bottcher}) or disconnected, then every bialgebraic branch arises in this way.  

\begin{theorem}
\label{thm:classification-of-bialgebraic}
Suppose that $f$ is a non-exceptional polynomial of degree $d \ge 2$. Assume that one of the following holds: 
\begin{enumerate}
    \item $J_f$ is connected and has a non-degenerate locally connected model, or

    \item $J_f$ is disconnected.
\end{enumerate}
Let $V_1 \subset \C^n$ be an irreducible subvariety with non-empty intersection with $\D_R^n$. Then, a branch $V_1'$ of $V_1$ contained in $\D_R^n$ is $f$-bialgebraic if and only if there exists an irreducible $f$-special subvariety $V_2$ such that $\dim_\C(V_1) = V_2$ and $\bpsi_{f,n}(V_1') \subset V_2$. In fact, we have 
\[
\overline{\bpsi_{f, n}(V_1')}^{\zar} = V_2.
\]
\end{theorem}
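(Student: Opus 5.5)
The ``if'' direction is Proposition~\ref{prop:f-special-gives-bialg}: if $\bpsi_{f,n}(V_1')$ is contained in an irreducible $f$-special $V_2$ of the same dimension, then it is an open piece of a branch of $V_2$, hence algebraic of dimension $\dim V_1$. So the work is the ``only if'' direction together with the equality $\overline{\bpsi_{f,n}(V_1')}^{\zar}=V_2$. I will argue by induction on $n$, and for fixed $n$ by induction on $r=\dim V_1$. The base cases are $r=0$, where $V_1'$ is a point and its image is contained in a fiber (take $\mathcal I=\emptyset$), and $n=1$, which is the Becker--Bergweiler transcendence: a bialgebraic curve in $\D_R$ is open and its image is all of $\C$.

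The first reduction uses the functional equation $\Psi_f(z^d)=f(\Psi_f(z))$. Write $W=\overline{\bpsi_{f,n}(V_1')}^{\zar}$ and $P_d(x)=(x_1^d,\dots,x_n^d)$. Then $P_d^k(V_1)$ is bialgebraic with image Zariski closure $F_n^k(W)$. If some coordinate $x_i$ is constant on $V_1'$, I will split it off and apply induction to the projection. This reduces to the case where every coordinate projection of $V_1$ is dominant. Next I will look at the sequence of irreducible varieties $F_n^k(W)$, all of dimension $r$. The goal is to show that this sequence is eventually periodic, which is exactly the $f$-special condition with $\mathcal I=\{1,\dots,n\}$. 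The parallel statement on the source side, that $P_d^k(V_1)$ is eventually periodic, should come out of the same rigidity.

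The core step, and the main obstacle, is turning bialgebraicity into a rigidity statement near the boundary circle $|z|=R$. I plan to analytically continue the algebraic relations. $V_1$ and $W$ are algebraic, and $\bpsi_{f,n}$ maps one into the other on an open set, so the correspondence $\overline{\{(v,\bpsi_{f,n}(v))\}}$ defines an algebraic correspondence between $V_1$ and $W$ that is compatible with $\Psi_f$ wherever $\Psi_f$ continues.

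The two hypotheses enter here.
\begin{itemize}
\item In the disconnected case, I would use the structure of $\Psi_f$ near a critical point escaping to $\infty$: the maximal disk of definition is bounded by a critical obstruction, and $\Psi_f$ is multivalued or branched there.
\item In the connected, locally connected model case, I would use the continuous extension of $\Psi_f$ to the circle, with landing rays. Along an algebraic curve in $V_1$ hitting the torus boundary, the radial limits must satisfy the algebraic relation.
\end{itemize}
In either case I then aim to show that the relation forces coordinates to be related by $x_i^{d^a}=\zeta x_j^{d^b}$ on the source and by $f^a(y_i)=\sigma(f^b(y_j))$ on the target, with $\sigma$ a symmetry of $J_f$. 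The idea is to compare the natural boundaries, or the landing combinatorics, of the two sides. Non-exceptionality rules out additional relations, in the spirit of Medvedev--Scanlon's classification of $F_n$-invariant varieties and the Becker--Bergweiler argument. I expect to invoke those earlier results for the final step: a variety of dimension $r$ invariant under some iterate of $F_n$, with all coordinate projections dominant, is cut out by such relations.

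Finally, I will check that $W$ has dimension exactly $r$ and is preperiodic, so that $V_2:=W$ works. The Zariski closure of the image has dimension at least $r$ because $\Psi_f$ is biholomorphic, and at most $r$ by bialgebraicity. Hence $\overline{\bpsi_{f,n}(V_1')}^{\zar}=V_2$. Any other $f$-special $V_2'\supseteq\bpsi_{f,n}(V_1')$ of dimension $r$ that is irreducible must equal this closure.
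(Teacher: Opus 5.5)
Your proposal has a genuine gap where the content of the theorem lies. The ``core step'' is stated only as an aim (``the relation forces coordinates to be related by \dots''), with no mechanism, and in the connected case the tool you invoke is not available. You rely on the continuous extension of $\Psi_f$ to $\partial\D$ and on landing rays. But the hypothesis is only that $J_f$ has a non-degenerate locally connected \emph{model}: $J_f$ need not be locally connected, rays need not land, and impressions can be non-trivial. Comparing natural boundaries can at best give the easy dichotomy in Proposition~\ref{prop:bialg-curve-case}. After continuing the branch $(z,h(z))$ to a boundary point $w_0$, either $h(w_0)\in\D$, and then $Q(\Psi(z),\Psi(h(z)))=0$ continues $\Psi_f$ across an arc, contradicting Fatou's theorem; or $h$ maps $\partial\D$ to itself near $w_0$. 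The real work is the second alternative. There one must show that $H=\Psi\circ h\circ\Phi$, continued via $Q$ to a neighborhood of $\imp(w_0)$, is a local symmetry of $J_f$, i.e.\ $H^{-1}(J_f)=J_f$ locally. Boundary limits only control Julia points approached from $\Psi_f(U_0\cap\D)$. They say nothing about Julia points near a biaccessible or polyaccessible point that are reached through other accesses. The paper handles this in three steps. It chooses $w_0$ so that $\iota(w_0)$ is an endpoint, interval point or Jordan point of the model. It uses the simply connected neighborhoods $D_{i,y_0}$ of Construction~\ref{cons:base-at-fibers}, in which every Julia point is a limit of points where $H=\Psi\circ h\circ\Phi$. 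It proves that $h$ respects the lamination (Lemma~\ref{lem:h-respects-lamination}), so $h(w_0)$ is of the same type and the argument also runs for $h^{-1}$. Proposition~\ref{prop:alg-symmetries} then turns the algebraic local symmetry into $(f,f)$-preperiodicity. None of these ingredients appears in your plan.

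In the disconnected case the mechanism is not a boundary phenomenon. $\Psi_f$ continues multivaluedly along paths in $\D_{R^{1/d^n}}$ via $\Psi_f(z^{d^n})=f^n(\Psi_f(z))$, branching at the precritical sets $E_{n,c}$. The paper's proof (Theorem~\ref{thm:DAS-totally-disconnected}) is a monodromy argument. Loops around points of $E_{n,c}$ act by $\Psi(z)\mapsto\Psi(\zeta z)$ with $\ord(\zeta)\ge 2^{n-n_c+1}$ (Lemma~\ref{lem:loop-monodromy-around-E_n}). After a permutation and a rotation, these loops are chosen so that the other coordinates return to themselves. Coordinates that also hit precritical points are forced by Laurent's theorem (Lemma~\ref{lem:many-preimages-then-special}) onto torsion cosets. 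The resulting positive-dimensional family of rotated relations is then specialized. Your remark that $\Psi_f$ is ``multivalued or branched'' gestures at this but supplies none of it. Your passage to pairwise relations also does not work as stated. A curve inside a bialgebraic $V_1$ need not be bialgebraic, so ``an algebraic curve in $V_1$ hitting the torus boundary'' carries no algebraic relation on its image. Moreover, for $r\ge 2$ the projections of $V_1$ to pairs of coordinates need not be curves. The paper instead projects to $r+1$ coordinates to obtain bialgebraic hypersurfaces. It proves the hypersurface case (Proposition~\ref{prop:bialg-hypersurface-case}) by slicing along fibers of $\pi_1$, whose slices are again bialgebraic, together with the countability of preperiodic hypersurfaces. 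Your predicted source relations $x_i^{d^a}=\zeta x_j^{d^b}$ are also too narrow: if $f=h^{\ell}$ with $\ell\ge 2$, then $\{y=\theta x^{\deg h}\}$ is bialgebraic for a suitable root of unity $\theta$.
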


\begin{remark}
\label{rem:one-dir-clear}
The ``if'' direction of Theorem~\ref{thm:classification-of-bialgebraic} follows directly from the definition of $f$-bialgebraic sets. Therefore, our proof will focus on the ``only if'' direction. 
\end{remark}
\begin{remark}
\label{conj:non-lc}
Although our methods rely on the two topological assumptions above, we expect Theorem~\ref{thm:classification-of-bialgebraic} to hold in full generality without any extra assumptions on $J_f$.
\end{remark}

\begin{remark}
The hypothesis that $J_f$ admit a non-degenerate locally connected model does indeed exclude certain polynomials. For example, \cite[Theorem~2.2]{Blokh-trivial-models} shows that the model is degenerate whenever \(f\) is a quadratic polynomial with a Cremer fixed point. More generally, \cite[Theorem~5.2]{blokh-uniCremer} proves that every basic uniCremer polynomial, of arbitrary degree, has a degenerate locally connected model.
\end{remark}
\begin{remark}
The hypothesis that $J_f$ admit a non-degenerate locally connected model is less restrictive than the hypothesis that $J_f$ be locally connected. In particular, \cite{milnor-non-lc-construction} gives an example of an infinitely renormalizable quadratic polynomial with non-locally connected Julia set. By construction, this polynomial has no Cremer or Siegel periodic orbit, so it lies within the scope of Kiwi's theory \cite{kiwi-real-lamination}. It follows that, although $J_f$ is not locally connected, its locally connected model is still non-degenerate.
\end{remark}
\begin{remark}
\label{rem:motivation}
Theorem \ref{thm:classification-of-bialgebraic} and Remark \ref{conj:non-lc} are partially motivated by \cite[Lemmas 5.9 and 6.11]{schmidt} which together prove Theorem \ref{thm:classification-of-bialgebraic} without any assumptions on $J_f$ when $n = 2$ and $V_1 = \{y = \theta x\}$ for some $\theta \in \Ss^1$. The original proof of \cite[Lemma 5.9]{schmidt} contained a gap, which has since been resolved following extensive correspondence between the author and Harry Schmidt.
\end{remark}

An interesting consequence of Theorem \ref{thm:classification-of-bialgebraic} is the next rigidity result.

\begin{corollary}
\label{cor:shared-basin}
Let $f$ be as in Theorem \ref{thm:classification-of-bialgebraic} and assume that the Julia set is connected and has a non-degenerate locally connected model. Suppose that $r(z) \in \C(z)$ is a rational function with $B_\infty(f)$ as a periodic Fatou component. Then, 
\[
r^{b}(z) = L \circ h^a(z),
\]
for some $a \ge 0$, some $b \ge 1$, and some polynomial $h$ satisfying $h^\ell = f$ for some $\ell \ge 1$, and a linear function $L$ commuting with a compositional power of $h$.
\end{corollary}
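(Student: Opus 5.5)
We prove Corollary~\ref{cor:shared-basin} by applying Theorem~\ref{thm:classification-of-bialgebraic} to the graph of an iterate of $r$ in Böttcher coordinates. Replace $r$ by an iterate $r^{b_0}$ so that $B_\infty(f)$ is invariant, $r(B_\infty(f))=B_\infty(f)$. Since $J_f$ is connected, $\Psi_f$ extends to a conformal isomorphism $\D_R \to B_\infty(f)$ (after the usual normalization, with $\infty$ corresponding to $0$). Then $g:=\Psi_f^{-1}\circ r\circ \Psi_f$ is a proper holomorphic self-map of the disk, hence a finite Blaschke product. Because $r$ is invariant on the basin and $J_f=\partial B_\infty(f)$, we should be able to push $r$'s fixed point structure to $\infty$. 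First we show that $r(\infty)=\infty$, possibly after a further iterate. By Denjoy--Wolff or the classification of Fatou components, $\infty$ lies in a cycle of $r$ in the basin. We may have to conjugate by a Möbius self-map of the disk; we handle this below.

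The key step is bialgebraicity of the graph. Consider the curve $\Gamma=\{(x,y): y=g(x)\}\subset \D_R^2$, which is algebraic since $g$ is a Blaschke product. Its image under $\bpsi_{f,2}$ is $\{(z,r(z))\}$, an algebraic curve. So $\Gamma$ is $f$-bialgebraic, and Theorem~\ref{thm:classification-of-bialgebraic} gives an irreducible $f$-special curve $V_2$ with $\overline{\bpsi_{f,2}(\Gamma)}^{\zar}=\operatorname{graph}(r)\subseteq V_2$. This curve is not contained in a fiber of either coordinate projection, so $\mathcal I=\{1,2\}$. Hence $\operatorname{graph}(r)$ is preperiodic under $F_2=(f,f)$. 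That is, $(f^N\times f^N)(\operatorname{graph} r)=(f^M\times f^M)(\operatorname{graph} r)$, so the curve $C:=(f^N\times f^N)(\operatorname{graph} r)$ is a periodic curve for $(f,f)$.

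Next we invoke the classification of curves in $\C^2$ that are periodic under split polynomial maps $(f,f)$, due to Medvedev--Scanlon (and Ghioca--Nguyen for the same polynomial). For non-exceptional $f$, an irreducible periodic curve that is not a fiber is given by $y=L\circ h^a(x)$ or $x=L\circ h^a(y)$. Here $h$ is a compositional root of $f$ (up to linear conjugacy) and $L$ is a linear map commuting with an iterate of $h$. Since $C$ is the image of the graph of $r$, the relation $f^N(r(z))=L\circ h^a(f^N(z))$ holds, or the symmetric version. Then we need to cancel $f^N$: write $f^N=h^{\ell N}$. Using the uniqueness of decompositions of non-exceptional polynomials (Ritt theory), together with the fact that $r$ preserves $\infty$, we conclude that $r$ is a polynomial and equals $L'\circ h^{a'}$ after possibly passing to a further iterate $r^b$. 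In the symmetric case $x=L h^a(y)$, degree considerations force $a=0$, so $r$ is linear.

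The main obstacle is this last cancellation, passing from a relation like $f^N\circ r = L\circ h^{a}\circ f^N$ to a formula for $r$ itself. We would first try to work with Böttcher coordinates directly. The relation says $g(x)^{d^N}=\lambda x^{k d^N}$ near $0$, so $g(x)=\zeta x^k$ on $\D_R$. This makes $r$ commute appropriately with $f$ up to the symmetry group of $J_f$. By Ritt/Julia-set rigidity, two rational maps sharing a Julia set with conjugate Böttcher actions are related as claimed, with the root of unity $\zeta$ producing $L$. This also resolves the earlier concern about $\infty$ being fixed, since $g(0)=0$.
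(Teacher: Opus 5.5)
Your proposal follows the paper's proof essentially step for step. You pass to an iterate so that $B_\infty(f)$ is invariant, note that $\Phi_f\circ r\circ\Psi_f$ is a finite Blaschke product so its graph is $f$-bialgebraic, and apply Theorem~\ref{thm:classification-of-bialgebraic} and Medvedev--Scanlon to get $f^N\circ r=p\circ f^N$ (or the reversed relation, which you correctly dispose of by degrees). You then finish with rigidity for polynomials sharing a Julia set, which the paper takes from \cite{atela}.

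Your final step is harder than it needs to be, and its framing is slightly off. Comparing poles in $f^N\circ r=p\circ f^N$ gives $r^{-1}(\infty)=\{\infty\}$ directly. So $r$ is a polynomial with $B_\infty(r)=B_\infty(f)$, hence $J_r=J_f$, which is exactly the hypothesis of that theorem. You need no Ritt-type cancellation, no Denjoy--Wolff argument, and no B\"ottcher computation $g(x)=\zeta x^k$. The rigidity result to invoke is the one for polynomials, not for general rational maps.
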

% \begin{remark}
% Corollary \ref{cor:shared-basin} is closely related to \cite[Conjecture 4.1]{Hinkkanen-Martin}. Let us explain why. Suppose we are in the setting of Corollary \ref{cor:shared-basin}, and let $G=\langle r,f\rangle$
% be the semigroup generated by $r$ and $f$. We claim that the Julia set of $G$ is equal to $J(r)$ (see \cite[Section 2]{Hinkkanen-Martin}).

% Indeed, since $J(f)\subset J(r)$, both $r$ and $f$ are normal on $\widehat{\mathbb{C}}\setminus J(r)$. It follows that the family $G$ is normal on $\widehat{\mathbb{C}}\setminus J(r)$, and hence $ 
% N(G)\supset \widehat{\mathbb{C}}\setminus J(r)$. Equivalently, $
% J(G)\subset J(r)$. 
% On the other hand, by \cite[Lemma 2.1]{Hinkkanen-Martin}, the Julia set of each generator is contained in $J(G)$, so in particular $ J(r)\subset J(G).$
% Therefore $J(G)=J(r).$

% Thus, we are exactly in the setting of \cite[Conjecture 4.1]{Hinkkanen-Martin}. Moreover, in the terminology of \cite{Hinkkanen-Martin}, the conclusion of Corollary \ref{cor:shared-basin} is precisely that the semigroup $\langle r,f\rangle$ is \emph{nearly abelian}. Hence, Conjecture 4.1 of \cite{Hinkkanen-Martin} holds in the special case covered by Corollary \ref{cor:shared-basin}.
% \end{remark}
\begin{remark}
In fact, we can take $b \in \{1, 2\}$. This is because if an iterate of a rational function is a polynomial, then the second iterate must already be a polynomial; see \cite[Section 4.1]{beardon}.     
\end{remark}

\begin{remark}
\label{rem:Julia-rigidity}
If we are in the setting of Corollary \ref{cor:shared-basin}, then $J(f)\subset J(r)$, since $B_\infty(f)$ is a periodic Fatou component of $r$. Thus, this situation naturally touches on rigidity questions concerning rational maps with identical Julia sets or with Julia sets having substantial overlap.

For instance, the case of equal Julia sets was studied by Levin and Przytycki in \cite{levin-przytcki}, under additional hypotheses involving the measure of maximal entropy. Another related direction is the theory of buried Julia components: Wang and Yang show in \cite{Wang-Yang-buried} that one may realize a buried Julia component quasiconformally conjugate to a given Julia set, although their setting concerns such embedded dynamical copies rather than literal set-theoretic inclusion $J(f)\subsetneq J(g)$
for two given rational maps. The theory of local symmetries of Julia sets, which we discuss briefly in Section \ref{sec:alg-symms}, is also closely related to this situation. Indeed, our proof of Theorem \ref{thm:classification-of-bialgebraic}, and hence ultimately of Corollary \ref{cor:shared-basin}, relies heavily on this point of view. Nevertheless, Corollary \ref{cor:shared-basin} does not appear to be a direct consequence of any of the existing results on overlapping Julia sets.
\end{remark} 
In view of Theorem~\ref{thm:classification-of-bialgebraic} and by analogy with the Ax--Lindemann--Weierstrass theorem, it is natural to expect that the Zariski closure in $\C^n$ of the image of an algebraic branch under $\bpsi_{f,n}$ is $f$-special. We therefore pose the following conjecture.
\begin{conjecture}
\label{conj:DALW}
Suppose that $f$ is a non-exceptional polynomial of degree $d \ge 2$. Let $V_1 \subset \C^n$ be an irreducible subvariety with non-empty intersection with $\D_R^n$, and let $V_1'$ be a branch of $V_1$ contained in $\D_R^n$. Then, the Zariski closure $\overline{\bpsi_{f,n}(V_1')}^{\text{Zar}}$ is $f$-special.
\end{conjecture}
\begin{remark}
\label{rem:DALW-gives-classification}
Conjecture \ref{conj:DALW} implies Theorem \ref{thm:classification-of-bialgebraic}. 
\end{remark}
Ultimately, by analogy with the Ax--Schanuel theorem, we arrive at the following conjecture.
\begin{conjecture}
\label{conj:dyn-ax-schanuel}
Suppose that $f$ is a non-exceptional polynomial of degree $d \ge 2$. Let $Z \subset \D_R^n \times U_\infty(f)^n$ be an analytic subset of dimension $0 \le m \le n$ defined by the parametrization
\[
\{(h_0(x), \dots,h_{n-1}(x), \Psi_f(h_0(x)), \dots,\Psi_f(h_{n-1}(x))) : x \in D \subset \C^m\},
\]
where $D$ is a non-empty open subset of $\C^m$ for some $m \ge 1$ and $h_0,\dots,h_{n-1}$ are holomorphic functions on $D$. Suppose that 
\[
(\Psi_f(h_0(x)), \dots,\Psi_f(h_{n-1}(x)))
\]
is not contained in a proper $f$-special subvariety of $\C^n$. Then, 
\[
\dim_\C\left(\overline{Z}^{Zar}\right) \ge m + n.
\]
\end{conjecture}
\begin{remark}
Conjecture \ref{conj:dyn-ax-schanuel} holds in the case \(m=n\). Indeed, in this setting,
\[
Z=\{(x_1,\dots,x_n,\Psi_f(x_1),\dots,\Psi_f(x_n)) : (x_1,\dots,x_n)\in D\},
\]
where $D$ is an open subset of $\mathbb{D}_R^n$.

To prove Conjecture \ref{conj:dyn-ax-schanuel} we need to show that $Z$ is Zariski dense in $\C^{2n}$. Assume, for contradiction, that there exists a non-zero polynomial $P \in \mathbb{C}[X_1,\dots,X_n,Y_1,\dots,Y_n]$ satisfying
\[
P(x_1,\dots,x_n,\Psi_f(x_1),\dots,\Psi_f(x_n))=0.
\]
 The above identity then forces an algebraic relation between \(x_i\) and \(\Psi_f(x_i)\) for some $1 \le i \le n$, which contradicts the transcendence of \(\Psi_f\).
\end{remark}
As in the classical setting of the Ax--Schanuel and Ax--Lindemann--Weierstrass theorems, one can show that Conjecture~\ref{conj:dyn-ax-schanuel} implies Conjecture~\ref{conj:DALW}, yielding the following result. 
\begin{theorem}
\label{thm:DAS-then-DALW}
Conjecture \ref{conj:dyn-ax-schanuel} implies Conjecture \ref{conj:DALW}.
\end{theorem}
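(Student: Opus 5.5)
Assume Conjecture~\ref{conj:dyn-ax-schanuel}, and let $V_1'\subset \D_R^n$ be a branch of the irreducible $V_1$ of dimension $m$. Set $W:=\overline{\bpsi_{f,n}(V_1')}^{\zar}$, which is irreducible because $V_1'$ is an irreducible analytic germ. I would use the same dimension-counting argument that derives Ax--Lindemann--Weierstrass from Ax--Schanuel. Choose a smallest $f$-special subvariety $S\subset \C^n$ containing $\bpsi_{f,n}(V_1')$. Intersections of $f$-special sets are not obviously special, so instead I would take $S$ of minimal dimension among irreducible $f$-special subvarieties containing $\bpsi_{f,n}(V_1')$. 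The whole space $\C^n$ qualifies with $\mathcal I=\{1,\dots,n\}$, so $S$ exists. The goal is to show $W=S$.

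The first step reduces to the case $S=\C^n$. By definition, $S$ lies in a fiber $\{x_j=c_j : j\notin\mathcal I\}$, and $\pi_{\mathcal I}(S)\subset \C^s$ is preperiodic under $F_s$. The coordinates $j\notin\mathcal I$ are constant on $\bpsi_{f,n}(V_1')$, so they can be dropped. If $\pi_{\mathcal I}(S)$ is a proper subvariety of $\C^s$, I would apply $F_s^N$, which is finite and commutes with $\bpsi$ via $\Psi_f(z^{d})=f(\Psi_f(z))$. This replaces $V_1'$ by its image under $z\mapsto z^{d^N}$ and makes the special set periodic. I do not expect this to reduce to the ambient-space case, so a cleaner strategy is to apply Conjecture~\ref{conj:dyn-ax-schanuel} directly only when $S=\C^n$, and to handle the general case by induction on $n$. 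A special $S$ whose preperiodic part is all of $\C^s$ with $s<n$ is just a coordinate slice, so the problem reduces to $n'=s$. If $\pi_{\mathcal I}(S)$ is a proper preperiodic subvariety of $\C^s$, then after replacing everything by $F^N$-images (the set $F_s^N(W)$ has the same dimension as $W$ since $F_s$ is finite), we need $W$ to equal the periodic set. I expect this case to be the main obstacle. The conjecture as stated only constrains curves not contained in a \emph{proper} special subvariety of $\C^n$. My first attempt would be to show that a periodic irreducible $P\subsetneq\C^s$ meeting $U_\infty(f)^s$ nontrivially has a $\bpsi$-preimage that is a finite union of translates of diagonal-type sets $\{x_i^{d^k}=\zeta x_j^{d^l}\}$ and points. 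This is the kind of description given by the classification in Theorem~\ref{thm:classification-of-bialgebraic}, though here it would have to be argued directly. Such a description would let me reparametrize $P$ by a lower-dimensional product and reduce $n$.

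Now the main case: $\bpsi_{f,n}(V_1')$ is not contained in any proper $f$-special subvariety. Parametrize a smooth open piece of $V_1'$ by holomorphic $h_0,\dots,h_{n-1}$ on an open $D\subset\C^m$ with generic rank $m$. The associated $Z$ has dimension $m$, and the conjecture gives $\dim\overline{Z}^{\zar}\ge m+n$. On the other hand, $Z\subset V_1\times W$, so $m+n\le \dim V_1+\dim W=m+\dim W$. Hence $\dim W\ge n$, so $W=\C^n$, which is $f$-special.

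Finally, combining the cases: in every case $W$ equals the minimal special $S$. Outside the main case this comes from the induction on the number of non-constant, non-periodic coordinates. The preimage description of periodic sets in the second step is the technical heart, and I would try to obtain it from Becker--Bergweiler transcendence together with the functional equation of $\Psi_f$.
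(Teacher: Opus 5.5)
\textbf{Verdict: there is a genuine gap.} Your main case is correct and is the same dimension count the paper uses: if $\bpsi_{f,n}(V_1')$ lies in no proper $f$-special subvariety, then $m+n\le\dim\overline{Z}^{\zar}\le m+\dim W$ forces $W=\C^n$.

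The gap is the case you flag yourself: the minimal special $S$ has $\pi_{\mathcal I}(S)$ a proper $F_s$-preperiodic subvariety of $\C^s$. For this case the proposal gives no argument. You offer only a hoped-for description of $\bpsi$-preimages of periodic subvarieties ``from Becker--Bergweiler''. The closing induction ``on the number of non-constant, non-periodic coordinates'' is never set up. So as written you have only handled the case where $S$ is a coordinate slice.

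The suggested tool is also the wrong one. Becker--Bergweiler is a one-variable transcendence statement and says nothing about the shape of $F_s$-periodic subvarieties. That shape comes from Medvedev--Scanlon (Lemma \ref{lem:form-of-invariant}). Its compatibility with $\Phi_f$ comes from uniqueness of B\"ottcher coordinates (Lemma \ref{lem:commutation-of-L-and-h}). Your guessed description is not right as stated either: the exponents are powers of $\deg h$ where $h^\ell=f$, not powers of $d$. For example, if $f=h^2$, the invariant curve $y=h(x)$ pulls back to $y=\theta x^{\sqrt d}$ with $\theta$ a root of unity.

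The idea you are missing is that the conjecture's conclusion already pins $W$ down when $W$ is a hypersurface, so no minimal special subvariety is needed. In that case $\overline{Z}^{\zar}\subset V_1\times W$ has dimension at most $m+n-1$. Conjecture \ref{conj:dyn-ax-schanuel} then puts $\bpsi_{f,n}(V_1')$ inside a proper $f$-special $T$. The irreducible hypersurface $W$ is a top-dimensional component of $T$, hence special.

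For $\dim W=k<n$, the paper permutes coordinates so that $\pi_{1,\dots,k}|_W$ is dominant. It then applies the hypersurface case to each $\pi_{1,\dots,k,\ell}(V_1')$, whose image has Zariski closure the hypersurface $\pi_{1,\dots,k,\ell}(W)\subset\C^{k+1}$. Each such projection is therefore either $\C^k\times\{c_\ell\}$ or $F_{k+1}$-preperiodic. A finiteness argument over the dominant map to $\C^k$ then shows $W$ is special.

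Your route can also be closed, but by structure theory rather than transcendence:
\begin{enumerate}
\item Your $F^N$ step makes $S$ periodic, and minimality survives by Proposition \ref{prop:preimage-also-special}.
\item Lemma \ref{lem:form-of-invariant} then presents $S$ as a graph over $\C^r$ of polynomials commuting with an iterate of $f$. Special sets correspond under this graph.
\item Minimality then says the projected branch lies in no proper special subvariety of $\C^r$.
\item The conjecture in $\C^r$ gives $\dim W\ge r=\dim S$.
\end{enumerate}
Either way, this is exactly the step your proposal leaves unproved.
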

Our next theorem proves Conjecture \ref{conj:dyn-ax-schanuel} under the assumption that $J_f$ is disconnected and also assuming that the projection of $Z$ to the first $n$ coordinates defines an algebraic subvariety of $\C^n$.
\begin{theorem}
\label{thm:DAS-totally-disconnected}
Assume we are in the setting of Conjecture \ref{conj:dyn-ax-schanuel} and that $J_f$ is disconnected. Moreover, assume that 
\[
(h_0(x), \dots,h_{n-1}(x))
\]
is a branch of an irreducible algebraic subvariety of $\C^n$. Suppose that 
\[
(\Psi_f(h_0(x)), \dots,\Psi_f(h_{n-1}(x)))
\]
is not contained in a proper $f$-special subvariety of $\C^n$. Then,
\[
\dim_\C\left(\overline{Z}^{Zar}\right) = m + n.
\]
\end{theorem}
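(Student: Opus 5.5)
We prove the statement by induction on $n$, reducing to a bialgebraicity statement and then using Theorem~\ref{thm:classification-of-bialgebraic} in the disconnected case.

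\textbf{Setup.} Let $V_1\subset\C^n$ be the irreducible variety with branch $V_1'=\{(h_0(x),\dots,h_{n-1}(x))\}$, so $\dim V_1=m$. Let $W=\overline{\bpsi_{f,n}(V_1')}^{\zar}$. The Zariski closure $\overline{Z}^{\zar}$ projects to $V_1$ in the first factor. Its generic fibre over $V_1$ contains the Zariski closure of the fibre of $Z$, which is a single point. Hence
\[
\dim\overline{Z}^{\zar}=m+\dim(\text{generic fibre})\le m+n,
\]
and equality holds exactly when the fibres are $n$-dimensional, i.e.\ $\overline{Z}^{\zar}=V_1\times\C^n$. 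So we must rule out a polynomial relation $P(v,\bpsi_{f,n}(v))=0$ on $V_1'$ with $P\not\equiv0$ on $V_1\times\C^n$.

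\textbf{Step 1: reduce to $m<n$ and work in the $W$-direction.} The case $m=n$ is the remark above, so assume $m<n$. Suppose the defect
\[
\delta:=m+n-\dim\overline{Z}^{\zar}
\]
is positive. Consider the family of slices: for each $w$ in a suitable Zariski open set, intersect $\overline{Z}^{\zar}$ with $\C^n\times\{w\}$. The slices in the $x$-direction produce analytic sets $V_1'\cap\bpsi_{f,n}^{-1}(\text{fibre})$. The plan is to find a positive-dimensional algebraic subset $A$ of $V_1'$, namely a component of $V_1\cap\pi_1(\overline{Z}^{\zar}\cap(\C^n\times\{w\}))$, whose image $\bpsi_{f,n}(A)$ lies in an algebraic set $B$ with $\dim B<\dim A+\delta'$. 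We then iterate until some algebraic piece is bialgebraic, as in the standard Ax--Schanuel $\Rightarrow$ bialgebraic argument. Concretely: choose a component $Y$ of $\overline{Z}^{\zar}\cap(V_1\times W)$ and take its fibres over points of $W$. By dimension counting, the generic such fibre $A_w$ has
\[
\dim A_w\ge m-\dim\overline{Z}^{\zar}+\dim W+\cdots,
\]
and its image lies in the point $w$. Fibres over $\bpsi$-images that are algebraic of the same dimension give bialgebraic branches.

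\textbf{Step 2: use the classification in the disconnected case.} Theorem~\ref{thm:classification-of-bialgebraic} applies because $J_f$ is disconnected. Any bialgebraic branch produced in Step 1 has image Zariski closure that is $f$-special. Moving through the family, there are countably many $f$-special subvarieties: each is determined by a coordinate set, preperiodic components, and a fibre value, with the fibre values varying continuously. Hence a one-parameter family of them has constant "special part". This forces $\bpsi_{f,n}(V_1')$ into a proper $f$-special subvariety, contradicting the hypothesis.

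\textbf{Main obstacle.} The hard step is the dimension bookkeeping in Step 1: showing that a relation with $\delta>0$ actually produces a \emph{positive-dimensional} bialgebraic branch, not merely a branch whose image closure has too small dimension. I would try induction on $n$. First project away coordinates on which $\bpsi_{f,n}(V_1')$ is constant, which is a special fibre condition. Then apply the induction hypothesis to the coordinate projections to show that the fibres of $\overline{Z}^{\zar}$ over $V_1$ must contain the full $\Psi$-direction unless some coordinate relation already exists. Finally, use that each single coordinate satisfies the one-variable Becker--Bergweiler transcendence to close the base case.
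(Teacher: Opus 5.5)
There is a genuine gap. Step~2 rests on Theorem~\ref{thm:classification-of-bialgebraic} in the disconnected case, but in the paper that case is \emph{deduced from} the statement you are proving. Proposition~\ref{prop:bialg-curve-case} settles disconnected $J_f$ by invoking Theorem~\ref{thm:DAS-totally-disconnected}, and the hypersurface and general cases are built on it. The connected-case machinery (impressions, local symmetries, Fatou's theorem) needs $R=1$, so it is unavailable here. The argument is therefore circular.

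More fundamentally, the classification cannot give the result even if granted. It only concerns bialgebraic branches, and is weaker even than Conjecture~\ref{conj:DALW}. At best it constrains relations among the values $\Psi_f(h_i)$ alone. You must instead exclude relations $P(v,\bpsi_{f,n}(v))=0$ in which the coordinates $v$ genuinely occur, i.e.\ prove algebraic independence over $\C(V_1)$ rather than over $\C$.

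Your Step~1 does not bridge this. Since $\Psi_f$ is injective on $\D_R$, the fibre of $Z$ over a point $w$ is at most one point, so no positive-dimensional $A_w\subset V_1'$ has image in $\{w\}$. Algebraic fibres of $\overline{Z}^{\zar}$ are not fibres of $\bpsi_{f,n}$. Concretely, take $m=1$, $n=2$ and suppose $\overline{Z}^{\zar}$ were a surface $Y\subset V_1\times\C^2$. The classification plus the hypothesis force $\overline{\bpsi_{f,2}(V_1')}^{\zar}=\C^2$, so $Y\to\C^2$ is generically finite. There is then no positive-dimensional sub-branch to exploit, and your scheme yields no contradiction exactly in the base case, which is the heart of the theorem.

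The ``standard'' argument you cite runs the other way: Ax--Schanuel $\Rightarrow$ bialgebraic classification, as in Theorem~\ref{thm:DAS-then-DALW}. A further warning sign is that your plan uses disconnectedness only through the classification. If it worked, it would also prove the Ax--Schanuel statement in the connected, non-degenerate-model case, which the paper does not obtain.

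The missing ingredient is the monodromy of $\Psi_f$, which exists only because $J_f$ is disconnected ($R<1$). The paper continues $\Psi_f$ through $\Psi_f(z^{d^j})=f^j(\Psi_f(z))$ along loops around points of $E_{j,c}$, reached by precritical continuation paths (Lemma~\ref{lem:loop-monodromy-around-E_n}, Proposition~\ref{prop:path-for-non-triv-monodromy}). After a permutation and rotation chosen so that the algebraic functions $h_i$ have trivial monodromy (Lemma~\ref{lem:permute+rotate-for-monodromy}), this continuation replaces $\Psi_f(w)$ by $\Psi_f(\zeta w)$ with $\mathrm{ord}(\zeta)\to\infty$.

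This turns a single relation $Q(w,\Psi_f(w),\Psi_f(h_1(w)),\dots)=0$ into a positive-dimensional family $Q(w,\Psi_f(g_0w),\Psi_f(g_1h_1(w)),\dots)=0$. Combined with Laurent's theorem (Lemma~\ref{lem:many-preimages-then-special}) to handle pairs $(w,h_i(w))$ lying on torsion cosets, and induction on $n$, this removes the dependence on $w$ and proves the curve case (Proposition~\ref{prop:DAS-curve}). The case $m\ge 2$ then follows by induction on $m$, slicing with a generic linear fibration that keeps every coordinate projection dominant (Lemma~\ref{lem:dim-red-via-fibration}).
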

As a corollary of Theorem \ref{thm:DAS-totally-disconnected} and the proof of Theorem \ref{thm:DAS-then-DALW} we get  the following result.
\begin{corollary}
\label{cor:DALW-disc}
If $J_f$ is disconnected, then Conjecture \ref{conj:DALW} holds. 
\end{corollary}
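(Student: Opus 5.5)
The plan is to combine Theorem~\ref{thm:DAS-totally-disconnected} with the standard reduction used to prove Theorem~\ref{thm:DAS-then-DALW}. That reduction from Ax--Schanuel to Ax--Lindemann--Weierstrass goes through only when the input $Z$ is the graph of $\bpsi_{f,n}$ over an \emph{algebraic} branch. This is exactly the extra hypothesis Theorem~\ref{thm:DAS-totally-disconnected} allows, so the corollary should follow once we check that this is the only case the reduction needs.

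Fix an irreducible $V_1$ and a branch $V_1'\subset \D_R^n$ of dimension $m$. Let $W=\overline{\bpsi_{f,n}(V_1')}^{\zar}$. We must show $W$ is $f$-special. Let $S$ be a minimal $f$-special subvariety containing $\bpsi_{f,n}(V_1')$; one exists since $\C^n$ is $f$-special and the relevant irreducible special subvarieties satisfy a descending chain condition. By definition, $S$ fixes some coordinates outside an index set $\mathcal I$, and $\pi_{\mathcal I}(S)$ is preperiodic. Replacing $V_1'$ by its image under the iterate $z\mapsto z^{d^N}$ lets us assume $\pi_{\mathcal I}(S)$ is periodic, since $\Psi_f$ conjugates $z^d$ to $f$; preimages of $f$-special sets under $F$ remain special. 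Then $W$ is special if its image is.

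The key structural step is to reduce to the case $S=\C^{n'}$. I would project away the constant coordinates, so that $V_1'$ is replaced by its projection, which is still a branch of an algebraic variety. I would also use the periodic structure of $\pi_{\mathcal I}(S)$ to reduce to a torus-like situation. This is the step I expect to be the main obstacle: a periodic subvariety of $\C^s$ under $F_s$ need not be a product. The cleanest route is probably the one in the proof of Theorem~\ref{thm:DAS-then-DALW}, which I assume already handles this reduction, so that we may assume $\bpsi_{f,n}(V_1')$ lies in no proper $f$-special subvariety.

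In that case, apply Theorem~\ref{thm:DAS-totally-disconnected} with $h=(h_0,\dots,h_{n-1})$ a local parametrization of $V_1'$ by an open $D\subset\C^m$. It gives
\[
\dim \overline{Z}^{\zar}=m+n .
\]
On the other hand, $Z\subset V_1\times W$, so $m+n\le m+\dim W$. Hence $\dim W=n$, so $W=\C^n$, which is $f$-special. Undoing the reductions, $W$ is the corresponding $f$-special subvariety $S$, which proves Conjecture~\ref{conj:DALW} when $J_f$ is disconnected.
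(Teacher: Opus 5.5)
Your opening observation is the right one. The corollary holds because every use of Ax--Schanuel needed to get Ax--Lindemann--Weierstrass is for a graph over an \emph{algebraic} branch, which is exactly the setting of Theorem~\ref{thm:DAS-totally-disconnected}.

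\textbf{The gap.} The problem is the step you flag as the main obstacle. The reduction to the case where $\bpsi_{f,n}(V_1')$ lies in no proper $f$-special subvariety is never carried out. The proof of Theorem~\ref{thm:DAS-then-DALW} does not supply it either, because it makes no such reduction. Projecting away constant coordinates is not enough. Take $V_1'$ to be the diagonal of $\D_R^2$. The minimal $f$-special set containing its image is the diagonal itself, which has no constant coordinate. So Theorem~\ref{thm:DAS-totally-disconnected} cannot be applied to $V_1'$, and your closing dimension count never gets started. As written, the argument only covers the case $\pi_{\mathcal I}(S)=\C^{|\mathcal I|}$.

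\textbf{How the paper avoids it.} The paper never reduces to that case; it uses the theorem contrapositively.
\begin{enumerate}
\item If $W$ is a hypersurface, then $Z\subset V_1\times W$ gives $\dim\overline{Z}^{\zar}\le m+n-1<m+n$. Hence $\bpsi_{f,n}(V_1')$ lies in a proper $f$-special subvariety, and that subvariety must be the irreducible hypersurface $W$.
\item If $\dim W=k<n-1$, permute coordinates so that $W$ dominates the first $k$ coordinates. For each $\ell>k$, the set $\pi_{1,\dots,k,\ell}(V_1')$ is still a branch of an algebraic variety, and the closure of its image is the hypersurface $\pi_{1,\dots,k,\ell}(W)$. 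So the hypersurface case applies.
\item The resulting preperiodicity and fiber conditions are glued using common exponents $N<M$.
\end{enumerate}

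\textbf{How to complete your own route.} You need the graph structure of periodic subvarieties.
\begin{enumerate}
\item After passing to iterates, Lemma~\ref{lem:form-of-invariant} writes $T=\pi_{\mathcal I}(S)$ as a graph $(x_1,\dots,x_k,g_1(x_{i_1}),\dots)$, with each $g_j$ commuting with an iterate of $f$.
\item The inverse of the isomorphism $\pi_{1,\dots,k}\colon T\to\C^k$ sends $f$-special subvarieties of $\C^k$ to $f$-special subvarieties of $T$. Hence minimality of $S$ forces $\bpsi_{f,k}(\pi_{1,\dots,k}(V_1'))$ to lie in no proper $f$-special subvariety of $\C^k$.
\item Apply Theorem~\ref{thm:DAS-totally-disconnected} to the algebraic branch $\pi_{1,\dots,k}(V_1')$. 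This makes its image Zariski dense in $\C^k$.
\item Therefore $\pi_{\mathcal I}(W)=T$, and so $W=S$.
\end{enumerate}
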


\subsection{Related dynamical transcendence questions}
\label{transcendence-of-canonical-ht}
Although we have focused on functional transcendence properties of B\"ottcher coordinates, algebraic independence of values of B\"ottcher coordinates has also been given some attention recently; see, for instance, \cite{xie-transcendence}. Moreover, while our results concern the B\"ottcher coordinate associated to a fixed polynomial, one may also ask analogous algebraic independence questions involving B\"ottcher coordinates associated to distinct polynomials. Some results in this direction are known; see, for example, \cite{nguyen-alg-independence} and the forthcoming work \cite{saleh-yap}. Finally, we note that our results are similar in spirit to questions concerning the transcendence of polynomial canonical height values; see \cite{nguyen-transcendence-hts}.

\subsection{Other Ax-Schanuel-type results}
Over the last decade, Ax--Schanuel type results have been established for a broad range of more general uniformization maps. Pila and Tsimerman \cite{Pila-Tsimerman} proved an Ax--Schanuel theorem for the modular $j$-function, showing that unexpected algebraic relations are explained by modular relations. Mok, Pila, and Tsimerman \cite{Mok-Pila-Tsimerman} then proved the corresponding theorem for uniformizing maps of pure Shimura varieties, while Bakker and Tsimerman \cite{Bakker-Tsimerman}, and later Gao and Klingler \cite{Gao-Klinger}, extended these ideas to period maps arising from variations of Hodge structure. For further recent developments, we refer the reader to \cite{Gao,Chiu-Tak,Freitag-Nagloo} and the references therein. We also mention that \cite{gamma-bialg} recently proved a classification of the bialgebraic varieties associated with Euler's $\Gamma$-function.

\subsection{Functional transcendence of Mahler functions}
\label{rem:transcendence-of-
mahler-functions}
The B\"ottcher function $\Psi_f(z)$ defined above is a Mahler function, i.e., a function satisfying a functional equation involving the transformation $z \mapsto z^d$ (see also \cite[Theorem 1.3]{Nishioka-book}). The study of Mahler functions and the transcendence and algebraic independence of their values dates back to Mahler \cite{Mahler-1,Mahler-2,Mahler-3}. Algebraic independence of Mahler functions was later studied by Kubota, Loxton, Van der Pooten, and Nishioka (see \cite{Nishioka-book} and the references therein). However, we note that Conjectures \ref{conj:DALW} and \ref{conj:dyn-ax-schanuel} require us to study the algebraic independence of functions
\[
z,a_1(z),\dots,a_{n-1}(z),\Psi_f(z),\Psi_f(a_1(z)),\dots,\Psi_f(a_{n-1}(z)),
\]
where $a_1,\dots,a_{n-1}$ are analytic functions with values in $\D_R^\ast$. Even though the function $\Psi_f(z)$ itself is a Mahler function, the functions $\Psi_f(a_i(z))$ typically are not Mahler functions. Therefore, the algebraic independence results of Mahler functions do not directly apply in our situation. 

\vspace{1em}

The proof of Theorem \ref{thm:classification-of-bialgebraic} in the case of a disconnected Julia set is deduced from Corollary \ref{cor:DALW-disc}, which in turn follows from Theorem \ref{thm:DAS-totally-disconnected}. Accordingly, the proofs are organized in a slightly non-linear order: we first establish Theorem \ref{thm:DAS-totally-disconnected} and Corollary \ref{cor:DALW-disc}, and only then prove Theorem \ref{thm:classification-of-bialgebraic}. We next outline the proof strategies for Theorems \ref{thm:DAS-totally-disconnected} and \ref{thm:classification-of-bialgebraic}.
\subsection{Proof Strategy for Theorem \ref{thm:DAS-totally-disconnected}.} If $J_f$ is disconnected, then $\Psi_f$ is only defined on $\D_R$ for some $R < 1$. But, we can still use the functional equation 
\begin{equation}
\label{eqn:psi-z-dn}
\Psi_f(z^{d^j}) = f^j(\Psi_f(z)),    
\end{equation}
to analytically continue $\Psi_f$ along paths in $\D_{R^{1/d^j}}$ for any $j \ge 1$. If $c$  is a critical point of $f$ on the boundary of $\Psi_f(\D_R)$, we can define 
\[
E_{j,c} = \{z \in \D: \Psi_f(z^{d^j}) = f(c)\},
\]
for any $j \ge 1$. It is shown  in \cite[Lemma 6.1]{schmidt} that $\Psi_f$ exhibits non-trivial monodromy along certain loops. More precisely, we can take the loops to be of the form
\[
\gamma_j := p_j \cdot \ell_j \cdot p_j^{-1}
\]
where $\ell_j$ is a small simple loop around a point $z_j \in E_{j,c}$ and $p_j$ is a carefully chosen simple path (see Definition \ref{def:precritical-cont-path}) starting in $\D_R$ and ending at a point of $\ell_j$. 

The nontrivial monodromy arising from the continuation of \(\Psi_f\) via equation~\eqref{eqn:psi-z-dn} is because of the fact that, by definition, \(\Psi_f(z_j^{d^j})\) has critical preimages under \(f^j\). Thus, by choosing an appropriate branch of \(f^{-j}\), we can ensure that the continuation of \(\Psi_f(z)\) along \(\ell_j\), which amounts to continuing \(f^{-j}\) along a small loop around \(f(c)\), actually changes  \(\Psi_f(z)\). In fact, \cite[Lemma 6.1]{schmidt} shows that continuing $\Psi_f$ along $\gamma_j$ using \eqref{eqn:psi-z-dn} sends $\Psi_f(z)$ to $\Psi_f(\zeta_jz)$ for some root of unity $\zeta_j$ of order at least $\alpha 2^j$ for some constant $\alpha > 0$; see also Lemma \ref{lem:loop-monodromy-around-E_n} and Proposition \ref{prop:path-for-non-triv-monodromy}. 

We now sketch the idea of the proof of Theorem~\ref{thm:DAS-totally-disconnected} in the case of curves, that is, when \(m=1\); see Proposition~\ref{prop:DAS-curve}. Suppose we have an algebraic relation
\[
P(x, \Psi_f(x), \Psi_f(h_1(x)), \dots, \Psi_f(h_{n-1}(x))) = 0,
\]
for all $x$ in a non-empty open subset $U$ of $\C$ where $h_1,\dots,h_{n-1}$ are holomorphic on $U$ defining a local chart $(x,h_1(x),\dots,h_{n-1}(x))$ of an algebraic curve in $\D_R^{n}$. Using the rich monodromy discussed above, and choosing the loops $\gamma_j$ carefully (see Definition \ref{def:increasing-psi-monodromy} and Lemma \ref{lem:permute+rotate-for-monodromy}), we can turn
\[
P(x, \Psi_f(x), \Psi_f(h_1(x)), \dots, \Psi_f(h_{n-1}(x))) = 0
\]
to a continuous family of relations
\[
P(x, \Psi_f(g_0x), \Psi_f(g_1h_1(x)), \dots, \Psi_f(g_{n-1}h_{n-1}(x))) = 0
\]
for all $(g_0, g_1, \dots, g_{n-1}) \in \mathcal{G}$, where $\mathcal{G}$ is a positive-dimensional analytic set. A specialization argument, together with an application of Laurent's theorem \cite[Th\'eor\`eme 1]{Laurent} (see Lemma \ref{lem:many-preimages-then-special}) allows us to turn the above relation into a relation of the form
\[
Q_1(x, \Psi_f(h_{s+1}(x)), \dots, \Psi_f(h_{n-1}(x))) = 0
\]
or a relation
\[
Q_2(\Psi_f(y), \Psi_f(\tilde h_1(y)), \dots, \Psi_f(\tilde h_s(y))) = 0,
\]
for some $0 \le s \le n-1$ where each $(y, \tilde{h}_i(y))$ defines a branch of a torsion coset of an algebraic subgroup of $\bG_m^2$; see Claim \ref{claim:h-sends-to-E-comp} and equations \eqref{eqn:u-rel} and \eqref{eqn:w-rel}. In the first case, we can conclude the proof inductively. In the second case, we use the special form of $\tilde{h}_i$ to prove the Zariski closure of the branch defined by 
\[
\Psi_f(y), \Psi_f(\tilde h_1(y)), \dots, \Psi_f(\tilde h_s(y))
\]
must be $F_{s+1}$-preperiodic. 

Finally, to prove the theorem in all dimensions, we proceed by induction on the dimension. The key ingredient is a slicing lemma (see Lemma~\ref{lem:dim-red-via-fibration}), which uses a suitable fibration to cut the given branch into a family of branches of lower dimension. We refer the reader to the end of Section~\ref{sec:pf-of-DAS-disconnected} for further details.
\subsection{Proof Strategy for Theorem \ref{thm:classification-of-bialgebraic}.} When $J_f$ is disconnected, Theorem \ref{thm:classification-of-bialgebraic} follows from Corollary \ref{cor:DALW-disc} (see also Remark \ref{rem:DALW-gives-classification}). So, we focus on the case where $J_f$ is connected and has a non-degenerate locally connected model. In this case, we know that $R = 1$ and $\Psi_f(\D) = B_\infty(f)$. 

The high-level flow of the proof is as follows: 
\begin{align}
\begin{tikzcd}
\text{Theorem \ref{thm:classification-of-bialgebraic} for $n = 2$, $V_1$ irreducible curve} \arrow[Rightarrow, d] \\
\text{Theorem \ref{thm:classification-of-bialgebraic} for $V_1$ a hypersurface} \arrow[Rightarrow, d] \\
\text{Theorem \ref{thm:classification-of-bialgebraic}}
\end{tikzcd}\notag
\end{align}
Thus, the first step of the proof is the case where $n=2$ and $V_1$ is an irreducible curve defined by a polynomial $P \in \C[X, Y]$; see Proposition \ref{prop:bialg-curve-case}. Suppose a branch $V'_1 \subset \D^2$ of $V_1$ is $f$-bialgebraic. Then, there exists some irreducible algebraic curve $V_2$ given by some polynomial $Q \in \C[X, Y]$ such that $\bpsi_{f,2}(V'_1) \subset V_2$. Suppose $(x, h(x))$ is a local parametrization of $V'_1$ for some holomorphic function $h: U \lra \D$ and some $U \subset \D$. We then also have
\begin{equation}
\label{eqn:Q-eqn}
Q(\Psi_f(x) ,\Psi_f(h(x))) = 0,
\end{equation}
for all $x \in U$. 

Since $P$ is an algebraic curve, we can use analytic continuation to extend $h$ along paths in $\C$. Using these analytic continuations we can choose a ``suitable'' $w_0 \in \partial\D$ (see Claims \ref{claim:w_0-is-nice} and \ref{claim:image-also-endpoint}) and a continuation of $h$ to some neighborhood $U_0$ of $w_0$ such that one of the following occurs:
\begin{enumerate}
    \item $h(U_0) \subset \D$, or

    \item $h(U_0 \cap \D) \subset \D$ and $h(U_0 \cap \overline{\D}^c) \subset \overline{\D}^c$; 
\end{enumerate}
see also the discussion preceding Claim \ref{claim:image-also-endpoint}. In the first case, we use equation \eqref{eqn:Q-eqn} to extend $\Psi_f$ beyond $\D$ and prove that an open subset of the Julia set must be smooth (see Claim \ref{lem:extend-psi}). This contradicts Fatou's theorem \cite[pp. 250]{Fatou} and finishes the proof. 

The second case is much more delicate. Here we use \eqref{eqn:Q-eqn} to extend the function
\[
H := \Psi_f \circ h \circ \Psi_f^{-1},
\]
initially defined on $\Psi_f(U_0 \cap \D)$, to an open neighborhood $N$ of the impression $\imp(w_0)$ in such a way that
\begin{equation}
\label{eqn:H-local-sym}
H^{-1}(H(N) \cap J_f) = N \cap J_f.
\end{equation}
In other words, $H$ is a local symmetry of $J_f$ in the sense of \cite{Levin-symm}. Establishing this identity is the main difficulty. Let us explain the source of the problem.

It is easy to see, by a simple continuity argument, that every point of
\[
\partial \Psi_f(U_0 \cap \D) \cap J_f
\]
is mapped by $H$ into $J_f$. However, the set $N \cap J_f$ may also contain points that do not lie in
\[
\partial \Psi_f(U_0 \cap \D) \cap J_f.
\]
This already occurs, for instance, when $J_f$ is locally connected and $\imp(w_0)$ is a biaccessible or polyaccessible point of the Julia set. For such points, the continuity argument gives no information, and a priori there is no reason for their images under $H$ to lie in $J_f$.

It is precisely at this stage that we use the assumption that $J_f$ admits a non-degenerate locally connected model. Depending on the type of this model, we can ensure the impression $\imp(w_0)$ corresponds to either a Jordan point, an interval point, or an endpoint in the sense of subsection~\ref{subsec:nbhd-bases}; see Claim \ref{claim:w_0-is-nice}. In each case, we construct a basis of simply connected open neighborhoods $\{N_i\}_{i \ge 1}$ of $\imp(w_0)$ (see Construction \ref{cons:base-at-fibers}) such that
\[
N_i \cap J_f \subset \partial \Psi_f(U_0 \cap \D) \cap J_f
\]
By the observation above, this implies that
\[
H(N_i \cap J_f) \subset J_f,
\]
and hence
\[
H^{-1}(H(N_i) \cap J_f) \supset N_i \cap J_f.
\]

To prove the reverse inclusion, we show that $\imp(h(w_0))$ is of the same type as $\imp(w_0)$; see Claim~\ref{claim:image-also-endpoint}. We may then repeat the above argument with $h^{-1}$ in place of $h$. The key point is that $h$ respects the lamination relation induced by the locally connected model and satisfies
\[
\imp(h(w_0)) = H(\imp(w_0));
\]
see Lemma~\ref{lem:h-respects-lamination}. The desired conclusion about $\imp(h(w_0))$ then follows from the fact that the model of $J_f$ is either a Jordan curve, an interval, or else has the property that uncountably many points are endpoints (see Lemma~\ref{lem:model-types}).

After showing $H$ gives a local symmetry of the Julia set, we can conclude the theorem using the fact that algebraic local symmetries must come from preperiodic curves under $(f,f)$ (see Proposition \ref{prop:alg-symmetries}). 

Once the curve case has been established, we prove the hypersurface case by induction; see Proposition~\ref{prop:bialg-hypersurface-case}. The main idea is to study the slices $V_1^a$ obtained by intersecting $V'_1$ with the fibers of the projection $\pi_1 \colon \C^n \lra \C$ 
onto the first coordinate. We show that, for all but finitely many $a \in \D_R$, the projection
\[
\pi_{2,\dots,n}\bigl(V_1^a\bigr)
\]
is  $f$-bialgebraic, and hence \(f\)-special by the inductive hypothesis. We can then use our definition of $f$-special sets, together with Medvedev and Scanlon's classification \cite{Scanlon}, to translate this information about the slices into the conclusion that \(\Psi_{f,n}(V_1')\) itself must also be contained in an \(f\)-special set.

Finally, let \(V_1 \subset \C^n\) be a subvariety of dimension \(k\). To prove Theorem~\ref{thm:classification-of-bialgebraic}, we consider the projections of \(V_1\) onto suitable collections of \(k+1\) coordinates and apply the hypersurface case. This shows that the image of each such projection under $\Psi_{f,k+1}$ is either contained in an \(F_{k+1}\)-preperiodic hypersurface or contained in a fiber. It is then straightforward to combine these projection constraints and conclude that $\Psi_{f,n}(V_1')$ is contained in an $f$-special subvariety of $\C^n$. 

\subsection{Notations and conventions.} We write $\D$, $\D_R$, and $\mathbb{S}^1$ for the unit disk, the open disk of radius $R$, and the unit circle, respectively.

Given a polynomial $f \in \C[x]$ of degree $d \ge 1$, we let $B_\infty(f)$ denote the basin of infinity, and throughout the paper we assume that $\infty \in B_\infty(f)$. We let $R \le 1$ denote the maximal radius of convergence of the B\"ottcher coordinate $\Psi_f$ (see Section~\ref{sec:Bottcher}), and we set
\[
U_\infty(f) := \Psi_f(\D_R).
\]
We also write
\[
\Phi_f \colon U_\infty(f) \to \D_R
\]
for the inverse of $\Psi_f$. Throughout the paper, $f$ will be a polynomial of degree $d \ge 2$ and $\Psi_f$ and $\Phi_f$ will be its associated B\"ottcher and inverse B\"ottcher coordinates. When no confusion can arise, we omit the subscript $f$ and write
$\Psi$ and $\Phi$ instead of $\Psi_f$ and $\Phi_f$.

We let $\bP^1$ denote the complex projective line, and we fix a choice of point at infinity so as to identify $\bP^1$ with the Riemann sphere $\RS$. In this way, we obtain the natural inclusion
\[
\C = \bP^1 \setminus \{\infty\} \subset \bP^1.
\]
Given a subset $J = \{j_1,\dots,j_r\} \subset \{1,\dots,n\}$, we let
\[
\pi_J = \pi_{j_1,\dots,j_r} \colon (\bP^1)^n \to (\bP^1)^{|J|}
\]
denote the projection
\[
(x_1,\dots,x_n) \mapsto (x_{j_1},\dots,x_{j_r}).
\]

We let $I = [0,1]$. A path in $\C^n$ is a continuous map $\gamma \colon I' \to \C^n$ where $I' \subset I$ is a subinterval, not necessarily closed. Throughout the paper, all paths are assumed to be smooth and locally injective. By abuse of notation, we also use $\gamma$ to denote the image of the path in $\C^n$.  Given a path $\gamma \colon I \to X$, we let $\gamma^{-1}$ denote the reversed path defined by
\[
\gamma^{-1}(s) = \gamma(1-s).
\]
If $I'' \subset I'$ is a subinterval, not necessarily closed, we write $\gamma(I'')$ for the corresponding subpath of $\gamma$. Given two paths $a$ and $b$ such that the endpoint of $a$ coincides with the starting point of $b$, we write $a \cdot b$ for the path obtained by first traversing $a$ and then $b$.

For any $k \ge 1$, we let $\mu_k$ denote the set of $k$-th roots of unity and we set
\[
\mu_\infty := \bigcup_{k \ge 1} \mu_k.
\]
Similarly, we let
\[
\mu_{d^\infty} := \bigcup_{k \ge 1}\mu_{d^k}.
\]

Suppose $X \subset \C^n$ is an analytic subvariety of $\C^n$. By an analytic branch of $X$ at a point $p \in X$, we mean an irreducible component of the germ $(X, p)$, or equivalently, an irreducible component of $B_\epsilon(p) \cap X$ for a sufficiently small $\epsilon > 0$.  
\subsection{Outline of the paper} In Section~\ref{sec:Bottcher}, we recall some basic facts about B\"ottcher coordinates and laminations. We also review the main result of \cite{block-lc-models} on locally connected models of polynomial Julia sets. The remainder of the section is devoted to constructing neighborhood bases at Jordan, interval, and endpoint impressions of $J_f$ (see Constructions~\ref{cons:base-at-model} and~\ref{cons:base-at-fibers}) and to proving several useful topological properties of these bases (see Propositions~\ref{prop:D_i-is-base} through~\ref{prop:D_i-cap-B_infty-2-conn-comps} and Corollary~\ref{cor:imps-are-equal}). We conclude the section by proving that a certain class of analytic continuations of holomorphic functions beyond the basin of infinity must send impressions to impressions and must be locally compatible with the lamination; see Lemma~\ref{lem:h-respects-lamination}. This lemma is crucial for the proof of Theorem~\ref{thm:classification-of-bialgebraic}.

Section~\ref{sec:alg-symms} is a short section in which we collect a classification of algebraic local symmetries of Julia sets of non-exceptional polynomials.

In Section~\ref{sec:reductions}, we prove Theorem~\ref{thm:DAS-then-DALW}, along with several other useful reductions needed later in the proof of Theorem~\ref{thm:DAS-totally-disconnected}.

In Section~\ref{sec:f-special-sets}, we prove that $f$-special sets give rise to $f$-bialgebraic sets.

In Section~\ref{sec:monodromy}, we study the nontrivial monodromy of $\Psi_f$ in the case where $J_f$ is disconnected. Our goal is to formulate some of the results of \cite[Section~6]{schmidt} in a more detailed form suited to our purposes.

Finally, in Sections~\ref{sec:pf-of-DAS-disconnected}, \ref{sec:pf-of-bialg-classification}, and~\ref{sec:pf-of-shared-comp}, we prove Theorems~\ref{thm:DAS-totally-disconnected} and~\ref{thm:classification-of-bialgebraic}, as well as Corollary~\ref{cor:shared-basin}, respectively.

\vspace{1em}
\textbf{Acknowledgements.} I would like to express my deepest gratitude to my PhD advisor, Laura DeMarco, for her continuous support and constructive feedback throughout this project. Many helpful conversations with her made the results of this paper possible. Theorem \ref{thm:classification-of-bialgebraic} and Conjecture \ref{conj:DALW} were motivated by a joint work in progress with Jit Wu Yap, and I would like to thank him for many fruitful conversations on this topic. I would also like to emphasize that he was the first to observe that Conjecture \ref{conj:DALW} is the right type of statement to hope for in the context of B\"ottcher coordinates. I am very grateful to Harry Schmidt for many helpful conversations. I am also grateful to Saeed Zakeri for many useful discussions, which helped me refine my initially simplistic understanding of non-locally connected Julia sets and pointed me in the right direction for learning more about the non-locally connected case. I also thank Yusheng Luo for very helpful conversations regarding local symmetries of polynomial Julia sets and on his feedback on Proposition \ref{prop:alg-symmetries}. Special thanks go to Max Weinreich for his constructive feedback which improved the presentaition of this article. Finally, I thank Adam Melrod, who suggested during a talk that ``special sets'' be renamed ``$f$-special sets'' for more clarity.

\section{B\"ottcher coordinates, laminations, and locally connected Julia set models}
\label{sec:Bottcher}
In this section, we recall some basic facts about B\"ottcher coordinates. We then review the main results of \cite{block-lc-models} and explain how they allow us to associate to the polynomial $f$ a $d$-invariant lamination, in the sense of Thurston \cite{Thurston-laminations}. The remainder of the section is devoted to using the locally connected model to construct suitable simply connected local neighborhood bases on $J_f$ and to prove useful topological properties of these bases. We conclude with a key lemma showing that a special kind of analytic continuation beyond $B_\infty(f)$ locally sends impressions to impressions and also respects the lamination.

Let $f$ be a polynomial of degree $d \ge 1$ and let $\infty \in B_\infty(f)$ be the basin of attraction of infinity.  B\"ottcher's theorem \cite[Theorem 9.1]{milnor} asserts that there is an open subset $\infty \in U_\infty(f) \subset B_\infty(f)$, some $0 < R \le 1$, and a biholomorphic function $\Psi_f: \D_R \lra U_\infty(f)$ with inverse $\Phi_f$ such that $\Psi_f(0) = \infty$ and 
\begin{equation}
\label{eqn:bottcher-eqns}
\Psi_f(z^d) = f(\Psi_f(z)) \ \ \text{ and } \ \ \Phi_f(f(w)) = \Phi_f(w)^d,
\end{equation}
for all $z \in \D_R$ and all $w \in U_\infty(f)$. 
\begin{fact}[\text{\cite[Theorem 9.1]{milnor}}]
$\Psi_f(z)$ is unique up to replacement by $\Psi_f(\zeta z)$ for some $(d-1)$-th root of unity $\zeta$.   
\end{fact}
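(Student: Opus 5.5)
The plan is to prove uniqueness by comparing any two Böttcher coordinates that satisfy \eqref{eqn:bottcher-eqns} and showing that their discrepancy is a rotation by a $(d-1)$-th root of unity. Suppose $\Psi_1,\Psi_2 \colon \D_R \lra U_\infty(f)$ are biholomorphic, send $0$ to $\infty$, and both satisfy $\Psi_i(z^d)=f(\Psi_i(z))$. After shrinking to a common small disk $\D_r$, I form the composition
\[
\phi := \Psi_2^{-1}\circ\Psi_1,
\]
which is holomorphic and injective near $0$ and satisfies $\phi(0)=0$. From the two functional equations I obtain the conjugacy relation
\[
\phi(z^d)=\Psi_2^{-1}(f(\Psi_1(z)))=\Psi_2^{-1}(\Psi_1(z))^d=\phi(z)^d .
\]
The goal is to show that this relation forces $\phi$ to be a rotation.

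To do so, I write $\phi(z)=az(1+g(z))$ with $a\neq0$ (since $\phi$ is injective) and $g(0)=0$. Substituting into $\phi(z^d)=\phi(z)^d$ gives
\[
a z^d (1+g(z^d)) = a^d z^d (1+g(z))^d .
\]
Comparing leading coefficients yields $a=a^d$, so $a^{d-1}=1$. Dividing out then gives $(1+g(z))^d = 1+g(z^d)$.

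Next I show that $g\equiv 0$. Suppose not, and let $cz^k$ with $c \neq 0$ be the lowest-order term of $g$. The lowest-order term of $(1+g(z))^d-1$ is $dcz^k$, while $g(z^d)$ has lowest order $kd>k$. This forces $dc=0$, a contradiction. Hence $g\equiv0$ and $\phi(z)=\zeta z$ with $\zeta=a\in\mu_{d-1}$.

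It remains to pass from the local identity to a global one. By the identity theorem, $\Psi_1(z)=\Psi_2(\zeta z)$ holds on all of $\D_R$. Conversely, the replacement $\Psi_f(\zeta z)$ still satisfies the Böttcher equation, because $(\zeta z)^d=\zeta z^d$ when $\zeta^{d-1}=1$.

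The only step requiring care is the normalization: one must use that the coordinate is tangent to a fixed normal form at $\infty$ only up to this rotation. The coefficient comparison above handles exactly that point, and I expect no real obstacle beyond it.
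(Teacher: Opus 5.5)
Your argument is correct. It is the standard uniqueness proof from Milnor's Theorem 9.1, which the paper cites without giving a proof of its own: the germ $\Psi_2^{-1}\circ\Psi_1$ commutes with $z\mapsto z^d$, and comparing power-series coefficients forces it to be $z\mapsto\zeta z$ with $\zeta^{d-1}=1$, after which the identity theorem gives $\Psi_1(z)=\Psi_2(\zeta z)$ on all of $\D_R$.
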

\begin{fact}[\text{\cite[Theorem 9.3]{milnor}}]
There exists a unique $R \le 1$ such that $\Psi_f$ extends holomorphically to $\D_R$. If $R = 1$, $\Psi_f$ sends $\D$ biholomorphically to $B_\infty(f)$. In this case, $J_f$ must be connected and $\infty$ is the only critical point of $f$ in $B_\infty(f)$. If $R < 1$, $J_f$ must be disconnected and we must have $\overline{U_\infty(f)} \subsetneq B_\infty(f)$. Moreover, the boundary of $U_\infty(f)$ must contain  a critical point of $f$.
\end{fact}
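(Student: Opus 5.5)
The statement is the classical Böttcher theorem package from Milnor's book (Theorem 9.3): the dichotomy $R=1$ versus $R<1$. I would follow the standard extension argument.

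First, the Green's function. On $U_\infty(f)$ set $G(w) = -\log|\Phi_f(w)|$. The functional equation \eqref{eqn:bottcher-eqns} gives $G(f(w)) = d\,G(w)$. Iterating, $G$ extends to all of $B_\infty(f)$ by
\[
G(w) = d^{-k} G(f^k(w))
\]
for $k$ large. It is positive and harmonic away from $\infty$, and it tends to $0$ at $J_f = \partial B_\infty(f)$. In these terms $U_\infty(f)=\{G > -\log R\}$.

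Next, existence of a maximal $R$. I would extend $\Phi_f$ along the sets $\{G > t\}$ by decreasing $t$. On $\{G>t\}$ one defines
\[
\Phi_f(w) = \Phi_f\bigl(f(w)\bigr)^{1/d},
\]
choosing the root by continuity. This is well defined and injective as long as $\{G > t\}$ contains no critical point of $f$ other than $\infty$. Indeed, then $f \colon \{G>t\} \to \{G>dt\}$ is a degree $d$ covering ramified only at $\infty$. Hence $\{G>t\}$ is a punctured disk around $\infty$, and the $d$-th root lifts uniquely.

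Let $t_0 = \max\{G(c) : c \text{ a finite critical point in } B_\infty(f)\}$, with $t_0 = 0$ if there is none. The extension then works exactly down to $t_0$, giving $R = e^{-t_0}$. Uniqueness of $R$ comes from uniqueness of analytic continuation of $\Psi_f$, combined with the normalization fact stated above.

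Finally, the two cases.

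\emph{Case $R=1$.} There is no finite critical point in $B_\infty(f)$. Then $\Phi_f$ is a conformal map of $B_\infty(f)$ onto $\D$. So $B_\infty(f)\cup\{\infty\}$ is simply connected, and hence its complement, the filled Julia set $K_f$, is connected; therefore $J_f = \partial K_f$ is connected.

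\emph{Case $R<1$.} A critical point $c$ with $G(c) = t_0$ lies on $\partial U_\infty(f)$. Moreover, $\overline{U_\infty(f)} \subset \{G \ge t_0\} \subsetneq B_\infty(f)$, since $G$ takes values below $t_0$ near $J_f$.

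For disconnectedness, note that $\{G = t_0\}$ is a level curve through the critical point $c$. This curve is a figure-eight type set whose complementary bounded components each contain part of $K_f$. Here one uses that every bounded component of $\{G < t_0\}$ meets $K_f$, by the maximum principle applied to $G$. This separates $K_f$, and hence $J_f$.

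The main obstacle is this last step, showing that separation of $K_f$ by the critical level curve forces $J_f$ to be disconnected. I would handle it via the Riemann–Hurwitz count on $f \colon \{G>t_0/d\} \to \{G>t_0\}$, which shows that $\{G>t_0/d\}$ is not simply connected.
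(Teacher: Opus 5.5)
The paper does not prove this fact; it cites Milnor. Milnor's Theorem 9.3 continues $\Psi_f$ itself as far as possible, using local inverse branches of $f$ in $f(\Psi_f(z))=\Psi_f(z^d)$. There $R$ is the supremal radius of continuation, and the critical point on $\partial U_\infty(f)$ appears as the obstruction to going further. The connectivity claims about $J_f$ come from a separate result in Milnor, Fatou's criterion that $K_f$ is connected if and only if it contains all finite critical points.

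Your route is correct in outline but genuinely different. You build $\Phi_f$ outward over the superlevel sets $\{G>t\}$ by lifting $d$-th roots through the branched covering $f\colon\{G>t\}\to\{G>dt\}$. This gives the explicit value $R=e^{-t_0}$ in terms of critical escape rates, and it handles the connectivity dichotomy in the same language. The price is that maximality of $R$ is no longer built in, and that is the one load-bearing step you skip.

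\textbf{The missing step.} ``The extension then works exactly down to $t_0$'' only proves $R\ge e^{-t_0}$. You need $R\le e^{-t_0}$ for two of your conclusions: that $R=1$ forces no finite critical point in $B_\infty(f)$ (and hence $J_f$ connected), and that $c\in\partial U_\infty(f)$ when $R<1$. Here is the fix. Suppose $\Psi_f$ continued holomorphically to $\D_{R'}$ with $R'>R=e^{-t_0}$.
\begin{itemize}
\item The identity $\Psi_f(z^d)=f(\Psi_f(z))$ persists on $\D_{R'}$.
\item $\Psi_f(\overline{\D}_R)$ is compact and contains $\overline{U_\infty(f)}\ni c$. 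So there is $z_c$ with $\Psi_f(z_c)=c$, and $|z_c|=R$ because $c\notin U_\infty(f)$.
\item Differentiating at $z_c$ gives $d\,z_c^{d-1}\Psi_f'(z_c^d)=f'(c)\Psi_f'(z_c)=0$, so $\Psi_f'(z_c^d)=0$.
\item But $|z_c^d|=R^d<R$, where $\Psi_f$ is univalent. This is a contradiction.
\end{itemize}

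\textbf{The misplaced obstacle.} The step you call the main obstacle is not where the difficulty lies. Once $K_f$ is split by a Jordan curve in $B_\infty(f)$, the two pieces have nonempty boundaries lying in $J_f$, so $J_f$ is disconnected at once. What your Riemann--Hurwitz count actually supplies is the separation itself. If $\{G>t_0/d\}$ were simply connected, the proper degree-$d$ map $f$ onto the disk $\{G>t_0\}$ would have exactly $d-1$ critical points counted with multiplicity. It in fact has $d-1$ at $\infty$ plus $c$, since $G(c)=t_0>t_0/d$. So the complement of $\{G>t_0/d\}$ is disconnected, and your minimum-principle remark puts points of $K_f$ in each component. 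This replaces the figure-eight picture, which would otherwise require the local cross structure of $\{G=t_0\}$ at $c$, a critical point of $G$ because $G\circ f=dG$.
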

\begin{fact}[\text{\cite[Thoerem 18.3]{milnor}}]
\label{fact:loc-conn}
When $J_f$ is locally connected, $\Psi_f$ extends continuously to the boundary $\partial\D$ and we must have $\Psi_f(\partial\D) = J_f$. 
\end{fact}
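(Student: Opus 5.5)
The statement to prove is Fact~\ref{fact:loc-conn}: if $J_f$ is locally connected, then $\Psi_f$ extends continuously to $\partial\D$ and $\Psi_f(\partial\D)=J_f$.

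The plan is to reduce to Carathéodory's theorem on boundary extension of conformal maps. First, I will show $J_f$ is connected. Suppose not; then $R<1$ and some critical point $c$ of $f$ lies in $B_\infty(f)$. The standard argument, via escaping critical points and the structure of the Green's function level sets, shows that $J_f$ has infinitely many components. In particular it has point components, or at least components that fail local connectivity at some point, contradicting the hypothesis. Granting connectedness, $R=1$ and $\Psi_f\colon\D\to B_\infty(f)$ is a conformal isomorphism sending $0$ to $\infty$. Moreover $B_\infty(f)$ is simply connected in $\RS$, since $K_f$ is connected and full. Its boundary is $\partial B_\infty(f)=J_f$; this holds for polynomials because the Julia set is the common boundary of the basin of infinity and of $K_f$.

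Next, I invoke Carathéodory's extension theorem. A conformal isomorphism from $\D$ onto a simply connected domain $U\subset\RS$ extends continuously to $\overline{\D}$ if and only if $\partial U$ is locally connected. Here $\partial U=J_f$, so the hypothesis applies directly and gives a continuous extension $\Psi_f\colon\overline{\D}\to\overline{B_\infty(f)}$.

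It remains to identify the image of the circle. The extension is continuous on the compact set $\overline{\D}$, so $\Psi_f(\overline{\D})$ is compact and contains $B_\infty(f)$, hence equals $\overline{B_\infty(f)}$. Points of $\partial\D$ cannot map into the open set $B_\infty(f)$: if $\Psi_f(w)\in B_\infty(f)$ for some $|w|=1$, then radial limits would land in the interior, contradicting properness of the conformal map $\D\to B_\infty(f)$. Hence $\Psi_f(\partial\D)\subset\partial B_\infty(f)=J_f$. Conversely, every point of $J_f$ is a limit of points $\Psi_f(z_k)$ with $|z_k|\to1$. Passing to a convergent subsequence $z_k\to w\in\partial\D$ and using continuity shows surjectivity onto $J_f$.

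The only step with real content is the connectedness reduction in the first paragraph. I will handle it by quoting the standard result (Milnor, Theorem~9.5 and the discussion after it) that a polynomial Julia set is either connected or has uncountably many components. A disconnected compact set with uncountably many components is not locally connected in the relevant sense, at least not in the form needed for Carathéodory's theorem. Alternatively, I could simply note that this Fact is stated as a citation of \cite[Theorem 18.3]{milnor}, where local connectivity is assumed together with connectedness of $J_f$, and read the hypothesis in that form.
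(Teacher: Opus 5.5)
Your proof is correct and is the standard one. The paper gives no argument beyond citing Milnor's Theorem~18.3, and that theorem is proved exactly as you do: apply Carath\'eodory's theorem to the simply connected domain $B_\infty(f)$, whose boundary is $J_f$, then use properness to get $\Psi_f(\partial\D)=J_f$. The one loose spot is the connectedness reduction. It is cleanest to say that components of a locally connected space are open, so a compact locally connected $J_f$ has only finitely many components, which rules out the uncountably many components of a disconnected polynomial Julia set. Having point components is not, by itself, a contradiction.
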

Now assume that $J_f$ is connected. For a point $\theta = e^{2\pi i t} \in \Ss^1$, we say that $z \in J_f$ lies in the \textit{impression of $\theta$}, denoted $\imp(\theta)$, if there exists a sequence $\{w_n\} \subset \D$ converging to $\theta$ such that
\[
\lim_{n\to\infty}\Psi_f(w_n)=z.
\]
If $J_f$ is locally connected, then every impression is a singleton by Fact~\ref{fact:loc-conn}. In that case, one obtains an equivalence relation on $\Ss^1$ by 
\[
x \sim_f y \quad \text{if and only if} \quad \Psi_f(x)=\Psi_f(y).
\]
This relation, first introduced by Thurston \cite{Thurston-laminations}, is called a \textit{$d$-invariant lamination}. The quotient $\Ss^1/{\sim_f}$ is homeomorphic to $J_f$, and hence provides a topological model for the Julia set. Kiwi \cite{kiwi-real-lamination} later extended this construction to polynomials without irrationally neutral cycles. Blokh, Curry, and Oversteegen \cite{block-lc-models} subsequently proposed a different approach, based on continuum theory, to construct locally connected models for arbitrary polynomial Julia sets. In what follows, we first review the definition of abstract circle laminations and then turn to the main result of \cite{block-lc-models}.

Given $t,t' \in \Ss^1$, let $\overline{tt'}$ denote the Poincar\'e geodesic in $\D$ connecting $t$ and $t'$. Also, for any set $G \subset \Ss^1$, let $\Ch(G)$ be the convex hull of the set $G$ in $\D$ i.e. the smallest convex set in $\overline{\D}$ containing $G$. Let $\sigma_d: \Ss^1 \lra \Ss^1$ be the multiplication by $d$ map given by 
\[
e^{2\pi i t} \mapsto e^{2\pi i dt}.
\]
Suppose $\sim$ is an equivalence relation on $\Ss^1$ and for any $x \in \Ss^1$ let $[x]_\sim$ denote the $\sim$-class of $x$. Following \cite{blokh-levin} and \cite{block-lc-models}, we say that the equivalence relation $\sim$ is a \textit{$d$-invariant lamination} if it satisfies the following properties:
\begin{itemize}
    \item[(E1)] The graph of $\sim$ in $\Ss^1 \times \Ss^1$ is a closed set; 

    \item[(E2)] If $\mathrm{g}_1$ and $\mathrm{g}_2$ are distinct $\sim$-classes, then the convex hulls $\Ch({\mathrm{g}_1})$ and $\Ch(\mathrm{g}_2)$ are disjoint; 

    \item[(D1)] $\sim$ is forward invariant i.e. the image $\sigma_d({\mathrm{g}_1})$ of a class is equal to another class ${\mathrm{g}_2}$; 

    \item[(D2)] $\sim$ is backward invariant i.e. the preimage of a class under $\sigma_d$ is a union of classes;

    \item[(D3)] For any class $\mathrm{g}$ with at least three elements, the restriction $\sigma_d|_{{\mathrm{g}}}: {\mathrm{g}} \lra \sigma_d({\mathrm{g}})$ is a covering map with positive orientation. 
\end{itemize}
A lamination is called \textit{non-degenerate} whenever it has at least two distinct classes. A class with three elements or more is called a \textit{gap}. The edges of the convex hulls of the $\sim$-classes are called the \emph{leaves} of the associated lamination.

By \cite[Theorem 2]{block-lc-models}, there exists a monotone continuous map $\phat: \RS \lra \RS$ that semiconjugates $f$ onto a topological polynomial $g: \RS \lra \RS$ such that 
\begin{enumerate}
    \item $\phat|_{U}$ is a homeomorphism when $U$ is equal to $B_\infty(f)$, or any other Fatou component whose boundary is not contracted to a single point by $\phat$; and
    \item $J_\sim := \phat(J_f)$ is locally connected; and 
    \item  every fiber of $\phat$ is equal to a union of impressions of $J_f$.
\end{enumerate}
 The image $J_\sim = \phat(J_f)$ is called the \textit{topological model} of $J_f$. We say that the model is \textit{non-degenerate} whenever $J_\sim$ is not a singleton. We also call $\phat(B_\infty(f))$ the \textit{basin of infinity of $g$} and denote it $B_\infty(g)$.

Let $\iota_f: \Ss^1 \lra J_\sim$ be the map given by 
\begin{equation}
\label{eqn:def-of-iota}
\iota_f := \phat \circ \imp
\end{equation}
which is well-defined since the impressions of $J_f$ are contracted by $\phat$. \cite[Theorem 32]{block-lc-models} shows that $\iota_f$ gives a semiconjugacy 
\begin{equation}
\iota_f(z^d) = g(\iota_f(z)). 
\end{equation}
The map $\iota$ induces a natural equivalence relation on $\Ss^1$ identifying the fibers of $\iota_f$ i.e. 
\[
x \sim_f y \iff \iota_f(x) = \iota_f(y). 
\]
Indeed, this equivalence relation defines a $d$-invariant circle lamination.

Let $K \subset \C$ be a full continuum whose boundary $\partial K$ is locally connected. A point $x \in \partial K$ may admit, up to homotopy, one, two, or more than two accesses. In these cases, we call $x$ \emph{uniaccessible}, \emph{biaccessible}, or \emph{polyaccessible}, respectively. See \cite[Definition 2.7]{kiwi-real-lamination} for the definition of accessibility. It is worth mentioning that aside from countably many points of $\partial K$, all points are either uniaccessible or biaccessible; see \cite[Proposition 2.18]{Pommerenke}.

Now consider the model $J_\sim$ which is the boundary of $B_\infty(g) = \C \setminus \phat(K_f)$. Whenever the model $J_\sim$ is not a Jordan curve, it necessarily contains a distinguished type of point called an \emph{endpoint}. Informally, endpoints are uni-accessible points of $J_\sim$ that arise as limits of poly-accessible points . The following definition makes this precise. 
\begin{definition}[$J_\sim$-endpoints]
\label{def:endpoints}
A point \(t\in \Ss^{1}\) is called a \emph{\(J_\sim\)-endpoint} if $[t]_{\sim_f} = \{t\}$ and there exist sequences
\(\{t_i\}_{i\ge 1}, \{t_i'\}_{i\ge 1}\subset \Ss^{1}\) such that
\begin{itemize}
    \item $t_i \ne t'_i$ and \(\iota_f(t_i)=\iota_f(t_i')\) for every \(i\ge 1\);
    \item \(t_i\to t\) and \(t'_i\to t\) as \(i\to\infty\); and
    \item for every \(i\ge 1\), the point \(t\) lies on the shorter closed arc of \(\Ss^{1}\) with endpoints \(t_i\) and \(t_i'\).
\end{itemize}
\end{definition}
\begin{remark}
\label{t_i,t'_i-are-leaves}
We may moreover assume that $\overline{t_i t'_i}$ is a non-degenerate leaf of the lamination. Indeed, if $\overline{t_i t'_i}$ is not a leaf, we replace the pair $(t_i,t'_i)$ by endpoints $a,b$ of a non-degenerate leaf $\overline{ab}$ in the convex hull of the class $[t_i]_{\sim_f}=[t'_i]_{\sim_f}$, chosen so that the shorter arc of $\Ss^1$ joining $a$ to $b$ contains $t$.
\end{remark}

To simplify notation, in the rest of the paper, we write $\iota := \iota_f$ and $[\cdot] := [\cdot]_{\sim_f}$. The next trichotomy is an analog of \cite[Lemma 3.2]{Meerkamp} for topological polynomials. 
\begin{lemma}
\label{lem:model-types}
One of the following three conditions must hold for the model $J_\sim$: 
\begin{itemize}
    \item $J_\sim$ is a Jordan curve;
    \item $J_\sim$ is an interval;
    \item $J_\sim$ has uncountably many endpoints. In fact, given any leaf $\overline{ab}$ of the lamination on $\Ss^1$, there exist uncountably many $J_\sim$ endpoints on the two arcs connecting $a$ to $b$. Moreover, any element $t \in \Ss^1$ whose equivalence class $[t]$ is the singleton $\{t\}$ must be a $J_\sim$-endpoint.  
\end{itemize}
\end{lemma}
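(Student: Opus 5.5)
The proof works entirely on the lamination side. We study the $d$-invariant lamination $\sim_f$ on $\Ss^1$, which satisfies (E1)--(D3), and recall that $J_\sim$ is homeomorphic to the quotient $\Ss^1/\sim_f$. This holds because $\iota$ is continuous and surjective, and $\iota$ identifies exactly the $\sim_f$-classes. We split into cases according to the set of leaves.

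\textbf{Case A: there are no non-degenerate leaves.} Then every class is a singleton, so $\iota$ is injective and $J_\sim\cong \Ss^1$ is a Jordan curve.

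\textbf{Case B: there are leaves.} Fix a leaf $\overline{ab}$. It splits $\overline{\D}$ into two sides, and the two complementary arcs $A_1,A_2$ of $\Ss^1$ lie on them. We aim to show one of two things:
\begin{itemize}
\item[(i)] every point is in a class of size at most two and the leaves are nested and "parallel", which forces $J_\sim$ to be an arc (the interval case);
\item[(ii)] each arc $A_k$ contains uncountably many endpoints.
\end{itemize}

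The key step is a density statement. We claim that, outside the interval case, the leaves are dense near every point of $A_k$ whose class is a singleton. Concretely, for every singleton class $\{t\}$ and every neighborhood of $t$, there is a leaf $\overline{t_it_i'}$ with $t$ on the shorter arc between $t_i$ and $t_i'$ and both $t_i,t_i'$ close to $t$.

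To prove the claim we use invariance. Pushing $\overline{ab}$ forward by $\sigma_d$ gives leaves by (D1). Pulling back by (D2) gives at least $d$ preimage leaves of each leaf, spread around the circle. Iterating the pullback produces leaves whose lengths tend to $0$, because $\sigma_d$ expands arcs by a factor $d$. Their endpoints accumulate densely, since the backward orbit of any point under $\sigma_d$ is dense.

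Given a singleton $t$, take a small arc $U\ni t$. By density, $U$ contains iterated preimages of $a$ on both sides of $t$; let $\ell_L,\ell_R$ be short preimage leaves on each side. If some short leaf straddles $t$ we are done. Otherwise every leaf near $t$ has both endpoints on one side. In that situation, the region between $\ell_L$ and $\ell_R$ near $t$ projects under $\iota$ onto an open arc of $J_\sim$ consisting of points with at most two accesses. Iterating this arc forward by $g$ (using (D3) and expansion) eventually covers all of $J_\sim$, and we then argue that $J_\sim$ is an arc or a circle. The circle option is excluded because a leaf exists, so we are in the interval case.

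Thus, outside the interval case, every singleton class is a $J_\sim$-endpoint, which is the last assertion of the third alternative. For uncountability, we note that each $A_k$ contains uncountably many singleton classes. Indeed, by (E2) and \cite[Proposition 2.18]{Pommerenke}-type countability, only countably many classes have at least three elements. The set of points lying on leaves cannot exhaust $A_k$, since otherwise (D3) and the closedness (E1) would give a contradiction with the Cantor-set structure of leaf endpoints. Hence $A_k$ minus the countably many gap vertices and the (nowhere dense, measure-zero) leaf endpoints is uncountable. Alternatively, we nest leaves $\ell_{s}$ indexed by binary strings inside $A_k$, using the density of short leaves on both sides. This produces a Cantor set of nested intersections, each of which shrinks to a single singleton class, giving uncountably many endpoints.

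\textbf{Main obstacle.} The expected main obstacle is the interval-versus-endpoint dichotomy in Case B: showing that failure of the straddling property near some singleton forces $J_\sim$ to be an arc. I would first try to adapt the proof of \cite[Lemma 3.2]{Meerkamp}, using its argument with $\sigma_d$ in place of the dynamics there. Laminations whose leaves are all parallel chords of the circle correspond exactly to the Chebyshev-type interval model.
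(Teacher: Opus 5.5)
\textbf{Your density claim fails, and it cannot be repaired.} The break is in the ``otherwise'' branch. From the assumption that no short leaf straddles the singleton $t$, you assert that the stretch of circle between $\ell_L$ and $\ell_R$ is carried by $\iota$ onto an open arc of $J_\sim$. You then assert that forward iteration forces $J_\sim$ to be an arc or a circle. Neither step is justified, and the conclusion is false.

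Take the basilica $f(z)=z^2-1$. Here $J_f$ is locally connected, so $J_\sim=J_f$. Every $\sim_f$-class has at most two elements, and $J_\sim$ is neither a Jordan curve nor an interval. The Fatou component $U_0\ni 0$ has Jordan boundary through $\alpha=(1-\sqrt5)/2$, where the external rays of angles $1/3,2/3$ land. All but countably many points of $\partial U_0$ are uniaccessible.

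Let $x=\iota(t)\in\partial U_0$ be uniaccessible, and suppose $\overline{t_it_i'}$ is a leaf with $\iota(t_i)=\iota(t_i')=:c_i$ and $t$ strictly inside the short arc. The curve $R_{t_i}\cup\{c_i\}\cup R_{t_i'}$ puts the connected set $\partial U_0\setminus\{c_i\}$ on the side of $x$. For large $i$, however, $\alpha$ lies on the other side. So $\{t\}$ is a singleton class that is not a $J_\sim$-endpoint. Near $x$ the model is a piece of $\partial U_0$ with limbs attached, not an arc.

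The same happens for Douady's rabbit, where $\alpha$ carries the three rays $1/7,2/7,4/7$, so a class of size three does not help. Thus the last sentence of the third alternative is false in this generality. The paper's own argument for it has the same defect: it places a pullback of a gap inside the arc and asserts that one of its edges separates $t$.

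That sentence does hold when $J_\sim$ is a dendrite. There a singleton class gives an endpoint of the dendrite, and the interior points of an arc ending there are cut points. Their consecutive accesses give leaves straddling $t$ whose endpoints shrink to $t$.

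\textbf{Comparison for the uncountability assertions.} Your second route is the right mechanism: nest short leaves along binary words and take the intersections. Unlike the paper's proof, it does not need a class with at least three elements. The paper assumes such a class $\mathbf g$ exists outside the Jordan and interval cases; this is asserted without proof and is false for the basilica. It then takes two boundary leaves of $\Ch(\mathbf g)$ cutting off short arcs and places pullbacks of $\mathbf g$ inside them, following Zdunik.

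Your route, though, rests on short leaves existing in every arc, and ``$\sigma_d$ expands arcs'' does not give this. Pullbacks across a critical leaf need not shrink. In the lamination of $z^2-2$, which identifies $e^{2\pi i\theta}$ with $e^{-2\pi i\theta}$, all leaves are parallel chords, and short ones occur only near $\pm1$. Proving that short leaves are dense in every arc outside the interval case is exactly where the trichotomy has to be earned, and your proposal does not do it.

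Also, leaf endpoints need not be nowhere dense; for dendrites such as $z^2+i$ they are dense. So your count of singleton classes should come from the nesting construction, not from that remark.
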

\begin{proof}
Suppose that none of the first two cases hold. As a result of this, we get a class of $\sim$ with at least three elements. This produces a gap in the lamination. To prove there are uncountably many endpoints using the presence of gaps, we follow an approach similar to the proof of \cite[Lemma 2]{zdunik}.

Let $\mathbf{g}$ be a gap of $\Ss^1$ and set $G:=\Ch(\mathbf{g})$. By the pigeonhole principle, there exist two distinct arcs
\[
I_1:=(a_1b_1)\quad\text{and}\quad I_2:=(a_2b_2),
\]
with positive arc length at most $\frac{2\pi}{3}$ such that $\overline{a_1b_1}, \overline{a_2b_2} \in \Bd(G)$ are leafs of the lamination. Since preimages of any point $g \in \mathbf{g}$ under the iterates of $\sigma_d$ are dense in $\Ss^1$, it is straightforward to find gaps $\mathbf{g}_1\subset I_1$ and $\mathbf{g}_2\subset I_2$ such that
\[
\sigma_d^{n_1}(\mathbf{g}_1)=\mathbf{g}
\qquad\text{and}\qquad
\sigma_d^{m_1}(\mathbf{g}_2)=\mathbf{g}
\]
for some integers $n_1,m_1\ge 1$. Let $G_i:=\Ch(\mathbf{g}_i)$ for $i=1,2$. Repeating the same argument for each $G_i$, we obtain, for every $i\in\{1,2\}$, two distinct arcs
\[
I_{i1}:=(a_{i1}b_{i1})
\quad\text{and}\quad
I_{i2}:=(a_{i2}b_{i2}),
\]
with positive arc length arc length at most $\frac{2\pi}{3^2}$ such that $\overline{a_{i1}b_{i1}}, \overline{a_{i2}b_{i2}}\in \Bd(G_i)$ are leafs of the lamination. 

Iterating this construction, we obtain for each finite word $s_1\cdots s_r\in\{1,2\}^r$ an arc $I_{s_1\cdots s_r}$ of length at most $\frac{2\pi}{3^r}$ such that
\[
I_{s_1}\supset I_{s_1s_2}\supset \cdots \supset I_{s_1\cdots s_r},
\]
and such that for any words $s_1\cdots s_r$ and $s'_1\cdots s'_{r'}$ the arcs $I_{s_1\cdots s_r}$ and $I_{s'_1\cdots s'_{r'}}$ are disjoint unless the shorter word agrees with the initial segment of the longer one.

Consequently, to every infinite sequence $\mathbf{s}=(s_i)_{i\ge 1}\in\{1,2\}^{\N}$ we can associate a unique point
\[
e_{\mathbf{s}}:=\bigcap_{i\ge 1} I_{s_1\cdots s_i}.
\]
By construction, $e_{\mathbf{s}}$ is a $J_\sim$-endpoint and each $\mathbf{s}$ determines a unique $e_{\textbf{s}}$. Hence, there are uncountably many endpoints. The proof of the second statement is analogous.

For the third statement, take any leaf $\overline{a_0b_0}$ belonging to a gap $\mathbf{c}$, such that $t$ is on an arc $I$ connecting $a_0$ and $b_0$. Take $\mathbf{g}$ to be the same as before. Arguing as above, we can find some $\mathbf{g}'$ such that 
\[
\sigma_d^n(\mathbf{g}') = \mathbf{g},
\]
for some $n \ge 2$ and the class $\mathbf{g'}$ is contained in $I$. This produces a smaller leaf $\overline{a_1b_1}$ such that $t$ is on the shorter arc $I$ connecting $a_0$ and $b_0$. Continuing in this way, we can construct leaves $\overline{a_ib_i}$ such that $a_i,b_i \to t$ as $i \to \infty$. The last statement follows.
\end{proof}
In light of Lemma \ref{lem:model-types}, we also make the next definition. 
\begin{definition}[interval points and Jordan points]
\label{def:interval-points}
In the case where $J_{\sim}$ is an interval, an \emph{interval point} is a point $t \in \Ss^1$ that is not an endpoint. In the case where $J_\sim$ is a Jordan curve, any point $t \in \Ss^1$ will be called a \textit{Jordan point}.
\end{definition}

\begin{figure}
    \centering
    \includegraphics[width=1\linewidth]{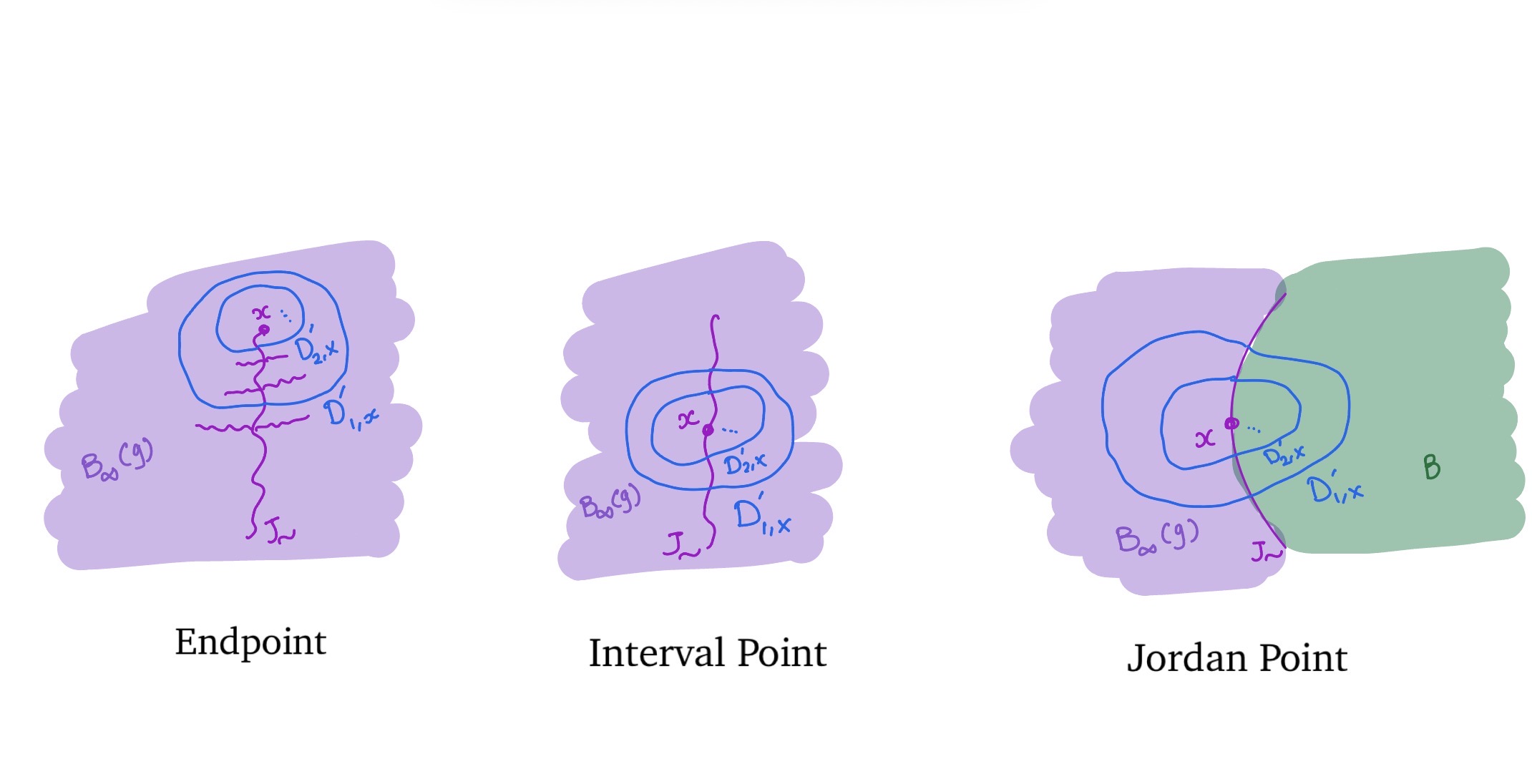}
    \caption{An illustration of the neighborhood bases $\{D'_{i,y}\}_{i \ge 1}$ around three different types of $J_\sim$ points}
    \label{fig:types-of-pts}
\end{figure}
\subsection{Construction of neighborhood bases around suitable fibers of \texorpdfstring{$\phat$}{phat}}
\label{subsec:nbhd-bases}
Our proof of Theorem~\ref{thm:classification-of-bialgebraic} hinges on constructing sufficiently small simply connected neighborhoods of carefully chosen points in the Julia set of $f$, with controlled topology and additional structural properties. 

Using Lemma \ref{lem:model-types}, there are three types of models $J_\sim$. In each case, we construct a suitable base around certain points of $J_\sim$. 
\begin{construction}
\label{cons:base-at-model}
Let $y = \iota(t) \in J_\sim$ be a point in $J_\sim$. We construct a neighborhood basis near $y$ in the next three cases as follows (see also Figure \ref{fig:types-of-pts}): 
\begin{enumerate}
    \item\textnormal{\textbf{ $t$ is an endpoint:}  Let \(\{t_i\}_{i\ge 1}, \{t_i'\}_{i\ge 1}\subset \Ss^{1}\) be as in Definition \ref{def:endpoints}. Let $\gamma_i$ be the hyperbolic geodesic in $\D$ connecting $t_i$ and $t'_i$. Then, $\phat \circ \Psi(\gamma_i)$ is a Jordan curve which divides $\C$ into two connected open sets. Let $D'_{i, y}$ be the open piece containing $y$. }
    \item \textnormal{\textbf{$t$ is an interval point:}} In this case there is another point $t' \in\Ss^1$ such that $\iota(t') = y$. Choose \(\{t_{i,1}\}_{i\ge 1}, \{t_{i,2}\}_{i\ge 1}\subset \Ss^{1}\) and \(\{t'_{i,1}\}_{i\ge 1}, \{t'_{i,2}\}_{i\ge 1}\subset \Ss^{1}\) such that $\iota(t_{i,1}) = \iota(t_{i,2})$, $\iota(t'_{i,1}) = \iota(t'_{i,2})$, $t$ is on the shorter arc connecting $t_{i,1}$ and $t_{i,2}$ and $t'$ is on the shorter arc connecting $t'_{i,1}$ and $t'_{i,2}$ for all $i \ge 1$, and $t_{i,j} \to t$ and $t'_{i,j} \to t'$ as $i \to \infty$ for $j = 1,2$. Let $\gamma_{i}$ and $\gamma'_i$ be the hyperbolic geodesic in $\D$ connecting $t_{i,1}$ and $t_{i,2}$, and $t'_{i,1}$ and $t'_{i,2}$, respectively. Then, $\phat \circ \Psi(\gamma_i \cup \gamma'_i)$ is a Jordan curve which divides $\C$ into two connected open sets. Let $D'_{i, y}$ be the open piece containing $y$. 

    \item \textnormal{\textbf{$t$ is a Jordan point:}} In this case $J_\sim$ divides $\C$ into $B_\infty(g)$ and a bounded component $B$. Choose \(\{t_{i}\}_{i\ge 1}, \{t'_{i}\}_{i\ge 1}\subset \Ss^{1}\) such that $t$ is on the shorter arc connecting $t_{i}$ and $t'_{i}$ for all $i \ge 1$, and $t_{i},t'_i \to t$ as $i \to \infty$. Let $\gamma_{i}$ be the hyperbolic geodesic in $\D$ connecting $t_{i}$ and $t'_{i}$. Let $p_i$ be a geodesic in $B$ connecting $\iota(t_{i})$ to $\iota(t'_{i})$. Then, $\left(\phat \circ \Psi(\gamma_i)\right) \cup p_i$ is a Jordan curve which divides $\C$ into two connected open sets. Let $D'_{i, y}$ be the open piece containing $y$. 
\end{enumerate}
\end{construction}

Naturally, we can pull back the neighborhoods $D'_{i,y}$ via $\phat$ to obtain open neighborhoods of $\phat^{-1}(y)$. This gives us the next construction.
\begin{construction}
\label{cons:base-at-fibers}
Given any $y \in J_\sim$ corresponding to an endpoint, interval point, or Jordan point, we define $D_{i, y} := \phat^{-1}(D'_{i,y})$ for all $i \ge 1$.
\end{construction}
% Also note that \(\Psi(\gamma_k)\) is a crosscut of \(B_\infty(g)\). It splits \(B_\infty(g)\) into two open connected components, one of which is precisely \(D''_i := D'_i \cap B_\infty(g)\).

The next series of results aim to give us a better understanding of the topological properties of the neighborhoods $D_{i,y}$. We start by showing that the sets $D_{i,y}$ are simply connected.
\begin{proposition}
\label{prop:D_i-is-base}
$D_{i,y} = \phat^{-1}(D'_{i,y})$ is a simply connected neighborhood base at $\phat^{-1}(y)$. 
\end{proposition}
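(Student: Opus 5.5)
We need to show three things: each $D_{i,y}$ is an open neighborhood of the fiber $\phat^{-1}(y)$, each $D_{i,y}$ is simply connected, and the $D_{i,y}$ shrink to $\phat^{-1}(y)$, meaning every open set containing $\phat^{-1}(y)$ contains some $D_{i,y}$. The plan is to prove each property first for the model sets $D'_{i,y}\subset\C$, and then transfer it to $D_{i,y}$ using two facts about $\phat$: it is continuous, and it is monotone, so its fibers are connected (and, being preimages of points under a map that is a homeomorphism on $B_\infty(f)$, full continua).

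\emph{Openness and neighborhood property.} By Construction~\ref{cons:base-at-model}, $D'_{i,y}$ is the bounded complementary component of a Jordan curve. The curve is made of one or two crosscuts of $B_\infty(g)$, possibly closed up by a geodesic $p_i$ in $B$ in the Jordan case, and its endpoints are the points $\iota(t_i)=\iota(t'_i)$ and so on.

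We first check that $y$ does not lie on this curve. The only candidate points are the landing points $\iota(t_i)$, $\iota(t_i')$ (and $\iota(t_{i,j})$, $\iota(t'_{i,j})$ in the interval case). These differ from $y$ because:
\begin{itemize}
\item in the endpoint case, $[t]=\{t\}$, so no $t_i\neq t$ has $\iota(t_i)=y$;
\item in the interval case, after passing to a subsequence the classes of $t_{i,j}$ avoid $[t]=\{t,t'\}$;
\item in the Jordan case, $\iota$ is injective.
\end{itemize}
Hence $D'_{i,y}$ is an open set containing $y$. By continuity of $\phat$, $D_{i,y}=\phat^{-1}(D'_{i,y})$ is open and contains $\phat^{-1}(y)$.

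\emph{Simple connectivity.} $D'_{i,y}$ is a Jordan domain, hence simply connected. We first show $D_{i,y}$ is connected. The map $\phat$ is surjective and monotone with compact connected fibers, so it is a closed monotone map of $\RS$. Preimages of connected open sets under such maps are connected: if $D_{i,y}=A\sqcup B$ with $A,B$ open and nonempty, each fiber lies in one piece, so $A,B$ are saturated. Their images are then disjoint and open (images of saturated open sets under a closed quotient map are open), and they partition $D'_{i,y}$, a contradiction.

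For simple connectivity, we show that $\RS\setminus D_{i,y}=\phat^{-1}(\RS\setminus D'_{i,y})$ is connected, which for a domain in the sphere is equivalent to simple connectivity. The set $\RS\setminus D'_{i,y}$ is a closed disk in the sphere, hence a continuum. Its preimage under a closed monotone surjection is again a continuum: compactness is clear, and connectedness follows by the same saturation argument applied to closed sets. I expect this monotone-preimage step to be the main obstacle, since it requires verifying that $\phat$ is closed on the sphere and handling the saturation carefully. It is standard continuum theory, and I would cite it.

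\emph{Shrinking.} Let $\overline{D'_{i,y}}$ denote the closure. We claim $\bigcap_i\overline{D'_{i,y}}=\{y\}$ and that the sets are eventually nested. Since $t_i,t_i'\to t$, the hyperbolic geodesics $\gamma_i$ shrink to $t$. The crosscuts $\phat\circ\Psi(\gamma_i)$ then shrink to $y$ by local connectivity of $J_\sim$ (Carathéodory continuity of the model Riemann map $\phat\circ\Psi$). In the Jordan case, the geodesics $p_i$ in $B$ between points converging to $y$ also shrink to $y$. The enclosed regions therefore converge to $y$, since the alternative side contains $\infty$ for large $i$.

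Now let $U$ be an open set with $U\supset\phat^{-1}(y)$. Suppose no $D_{i,y}$ lies in $U$. Pick $x_i\in D_{i,y}\setminus U$ and pass to a subsequence with $x_i\to x\notin U$. By continuity and the previous claim, $\phat(x_i)\to y$, so $\phat(x)=y$, that is, $x\in\phat^{-1}(y)\subset U$. This contradiction gives the neighborhood-base property.
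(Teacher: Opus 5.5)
Your proof is correct, and it reaches connectedness and simple connectivity by a different route than the paper.

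\textbf{The paper's route.} The paper first proves the neighborhood-base property, by the same compactness argument you give at the end. It then uses that property to prove connectedness by hand. It splits $D_{i,y}=U\sqcup V$ and uses that $\phat$ is a homeomorphism on the non-collapsed Fatou components to force $\phat^{-1}(D'_{i,y}\setminus J_\sim)\subset U$. It then rules out $V\neq\emptyset$ using a collapsed Fatou component $W$ and a point of $\partial W$. For simple connectivity, it takes a Jordan curve $\gamma\subset D_{i,y}$, builds an explicit path $\gamma'\subset (D'_{i,y})^c$ from $\infty$ to $\phat(x)$, and checks directly that $\phat^{-1}(\gamma')$ is connected.

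\textbf{Your route.} You replace all of this with one principle: a closed monotone surjection pulls connected sets back to connected sets. You apply it once to $D'_{i,y}$ and once to $\RS\setminus D'_{i,y}$. You then combine it with the fact that a domain in the sphere is simply connected exactly when its complement is connected.

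\textbf{Comparison.} Your argument is shorter and does not depend on the neighborhood-base property. It also does not need to know how many components $D'_{i,y}\setminus J_\sim$ has, or how bounded Fatou components are collapsed. The paper's argument stays closer to the dynamics and avoids quoting continuum theory.

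\textbf{Remarks on your write-up.}
\begin{itemize}
\item The step you flag as the main obstacle is immediate. $\phat$ is closed because $\RS$ is compact and Hausdorff. The closed-set version of your saturation argument only uses that continuous images of compact sets are compact.
\item You supply two facts the paper uses without comment. First, $y$ does not lie on the Jordan curve bounding $D'_{i,y}$. Second, $D'_{i,y}$ shrinks to $y$, which follows from continuity of $\phat\circ\Psi$ on $\overline{\D}$ with boundary values $\iota$.
\item Your claim that the $D'_{i,y}$ are eventually nested is neither proved nor needed, so you can drop it.
\end{itemize}
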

\begin{proof}
Let $U\subset \C$ be an open set containing $\phat^{-1}(y)$. For ease of notation, we drop $y$ from $D_{i,y}$ and $D'_{i,y}$ and denote them by $D_i$ and $D'_i$ for the remainder of the proof. We claim that there exists $i\ge 1$ such that
$D_i\subset U$. Assume otherwise. Then for every $i\ge 1$ we can choose a point
$x_i\in D_i\cap U^c$. In particular, $\hat{\varphi}(x_i)\in D'_i$ for all $i\ge 1$, and hence
$\hat{\varphi}(x_i)\to y$ as $i\to\infty$. Since $U^c$ is closed and $\RS$ is compact, the sequence $\{x_i\}$ has a convergent subsequence $\{x_{n_i}\}$ with
$x_{n_i}\to x\in U^c$. By continuity of $\hat{\varphi}$ we obtain
\[
y =\lim_{i\to\infty}\hat{\varphi}(x_{n_i})=\hat{\varphi}(x).
\]
Thus $x\in \hat{\varphi}^{-1}(y) \subset U$ contradicting $x\in U^c$. Therefore, $\{D_i\}$ is indeed a neighborhood basis at $\phat^{-1}(y)$.

We now show that each $D_i$ is connected. Suppose that
$D_i = U \cup V$
for some disjoint open sets $U,V\subset \C$. Observe that $U \cup V$ also covers $\phat^{-1}(y)$ which is connected by monotonicity of $\phat$. So, we may assume, without loss of generality, that $\phat^{-1}(y) \subset U$. Since $D_i$'s form a basis at $\phat^{-1}(y)$, there is some $j \ge i$ such that $D_j \subset U$. Now note that by construction, $D'_i \setminus J_\sim$ has at most two connected components depending on the type of $y$. By \cite[Theorem 2]{block-lc-models}, $\phat$ is a homeomorphism when restricted to the Fatou components that are not collapsed to points by $\phat$. This implies that $\phat^{-1}(D'_i \setminus J_\sim)$ has the same number of components as $D'_i \setminus J_\sim$. Also, $D'_j$ clearly intersects all of the components of $D'_i \setminus J_\sim$. Therefore, using $D_j \subset U$ we can conclude that $U$ must intersect all connected components of $\phat^{-1}(D'_i \setminus J_\sim)$. Since $U$ and $V$ cover these connected components, we conclude that $U$ contains all of them. Hence, $\phat^{-1}(D'_i \setminus J_\sim) \subset U$. This also implies $V \subset \phat^{-1}(J_\sim)$. To finish the proof, we must show that $V$ is empty. So, we assume for the sake of contradiction that $V$ is non-empty.

Since $V$ is open and non-empty, it has non-empty intersection with at least one Fatou component of $f$, say $W$. But, $V \subset \phat^{-1}(J_\sim)$ shows that $W$ must contract by $\phat$ to a point say $w \in D'_i \cap J_\sim$. Thus, $W \subset D_i$. Since $U$ and $V$ cover $D_i$ and $W$ has non-empty intersection with $V$ we also get $W \subset V$.
% Let $W\neq B_\infty(f)$ be a Fatou component of $f$ such that $\hat{\varphi}(W)$ meets $D'_i\setminus J_\sim$ at a point outside $D''_i$. Then $\hat{\varphi}(W)\subset D'_i$, since $\hat{\varphi}(W)$ does not meet $\partial D'_i$. This is immediate if $\hat{\varphi}(W)$ is a singleton. Otherwise, by \cite[Theorem~2]{block-lc-models}, $\hat{\varphi}$ maps $W$ homeomorphically onto a component of $\RS\setminus J_\sim$ different from $B_\infty(g)$, and such a component clearly cannot meet $\partial D'_i$ which is contained in $\overline{B_\infty(g)}$. Consequently,
% $W\subset \hat{\varphi}^{-1}(D'_i)=D_i$.
% We claim that in fact $W\subset U$.

The boundary of $W$ also contracts to $w$ by $\phat$ by continuity. So, $\partial W \subset D_i$. Choose a point $z\in \partial W \subset J_f$. Any sufficiently small neighborhood of $z$ contained in $D_i$ must intersect $B_\infty(f)$, and therefore also meets $D_i\cap B_\infty(f)$. But, we have
\[
D_i\cap B_\infty(f) \subset \phat^{-1}(D'_i \setminus J_\sim) \subset U. 
\]
Since $z \in \partial W$ any neighborhood of $z$ also intersects $W \subset V$. Since $U$ and $V$ are disjoint, this forces $z\in U$ and $z \in V$ which is absurd. So,  $V$ is empty. It follows that $D_i$ is connected for all $i \ge 1$.

The final step is to prove that $D_i$ is simply connected. Let $\gamma\subset D_i$ be a simple closed curve, and let $B$ denote the bounded component of $\C\setminus \gamma$. We claim that $B\subset D_i$. Suppose otherwise, and choose a point $x\in B\setminus D_i$. Then $\hat{\varphi}(x)\in (D'_i)^c$. We can choose a path $\gamma'\subset (D'_i)^c$ joining $\infty$ to $\hat{\varphi}(x)$ as follows: 
\begin{itemize}
    \item If $\phat(x) \in \overline{B_\infty(g)}$, let $\gamma'$ be a simple path in $B_\infty(g) \setminus D'_i$ connecting $\infty$ to $\phat(x)$; and
    \item if $\phat(x)$ lies in some bounded component $W$ of $\C \setminus J_\sim$, we can choose a point $z \in \partial W \setminus D'_i$ and define $\gamma'$ to be the union of a simple path in $W \setminus D'_i$ connecting $\phat(x)$ to $z$ and a simple path in $B_\infty(g) \setminus D'_i$ connecting $\infty$ to $z$.
\end{itemize}

\begin{claim}
\label{claim:gamma'-conn}
$\hat{\varphi}^{-1}(\gamma')$ is connected.
\end{claim}
\begin{proof}
This is clear if $\phat(x) \in B_\infty(g)$ since $\phat|_{B_\infty(f)}$ is a homeomorphism onto its image. If $\phat(x) \in J_\sim$ and $\hat{\varphi}^{-1}(\gamma') = U \cup V$, then we may assume $\phat^{-1}(\phat(x)) \subset U$ since $\phat^{-1}(\phat(x))$ is connected. It follows that $U$ must intersect $\phat^{-1}(\gamma' - \{\phat(x)\})$ which is also connected. Hence, $\phat^{-1}(\gamma' - \{\phat(x)\}) \subset U$ which implies that $\phat^{-1}(\gamma') \subset U$ and yields that $\phat^{-1}(\gamma')$ is connected. The proof for the case of $\phat(x) \in W$ for some bounded component $W$ of $\C \setminus J_\sim$ is analogous.   
\end{proof}
Since $x\in \hat{\varphi}^{-1}(\gamma')$, the set $\hat{\varphi}^{-1}(\gamma')$ meets $B$. On the other hand, $\hat{\varphi}^{-1}(\gamma')$ is disjoint from $\gamma$ because $\gamma\subset \hat{\varphi}^{-1}(D'_i)=D_i$. Therefore, $\hat{\varphi}^{-1}(\gamma')$ is contained in a single component of $\C\setminus \gamma$, and since it meets $B$ it must be contained in $B$. In particular,
\[
\infty=\hat{\varphi}^{-1}(\infty)\in \hat{\varphi}^{-1}(\gamma')\subset B,
\]
contradicting that $B$ is bounded. This contradiction shows that $B\subset D_i$, and hence every simple closed curve in $D_i$ bounds a disk contained in $D_i$. Therefore, $D_i$ is simply connected.  
\end{proof}

It is important for our proof of Theorem \ref{thm:classification-of-bialgebraic} to also get a handle on the topology of the intersections $D_{i,y} \cap B_\infty(f)$. The next Proposition aims to achieve this.
\begin{proposition}
\label{prop:D_i-cap-b_infty-connected}
If $t \in \Ss^1$ corresponds to an endpoint, Jordan point, or interval point and $y = \iota(t)$, then $$D_{i,y} \cap B_\infty(f) = \phat^{-1}(D'_{i,y} \cap B_\infty(g)).$$
Consequently, if $t$ is a $J_\sim$-endpoint or a Jordan point, $D_{i,y} \cap B_\infty(f)$ must be connected and if $t$ is an interval point, $D_{i, y} \cap B_\infty(f)$ has exactly two connected components $D^1_{i,y}$ and $D_{i,y}^2$.  
\end{proposition}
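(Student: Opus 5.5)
The plan is to establish the set identity first and then read off the component count from the topology of $D'_{i,y}\cap B_\infty(g)$ in the model. Since $D_{i,y}=\phat^{-1}(D'_{i,y})$, the identity follows once we know that $\phat^{-1}(B_\infty(g)) = B_\infty(f)$, i.e. that a point of $\C$ maps into $B_\infty(g)$ if and only if it lies in $B_\infty(f)$. One inclusion is immediate: $B_\infty(g)$ is by definition $\phat(B_\infty(f))$. For the other, take $x\notin B_\infty(f)$. Then either $x\in J_f$, so $\phat(x)\in J_\sim$ with $J_\sim=\partial B_\infty(g)$ disjoint from the open set $B_\infty(g)$, or $x$ lies in a bounded Fatou component $W$. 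In the latter case $\phat(W)$ is either a point of $J_\sim$ (when $\partial W$ is collapsed, since $\phat(\partial W)\subset J_\sim$ and $W$ goes to the same point by continuity/monotonicity), or a homeomorphic image of $W$ that is a component of $\RS\setminus J_\sim$ different from $B_\infty(g)$, by \cite[Theorem 2]{block-lc-models}. We must exclude $\phat(W)\cap B_\infty(g)\neq\emptyset$. For this I would use that $\phat|_{B_\infty(f)}$ is a homeomorphism onto $B_\infty(g)$ together with monotonicity: if $\phat(x)=\phat(z)$ with $z\in B_\infty(f)$, the fiber $\phat^{-1}(\phat(z))$ is a connected set containing $z$ and $x$. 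Every fiber is a union of impressions, hence contained in $J_f$, unless it is a point in the open set $B_\infty(g)$, where it must be the single point $z$ by injectivity. Either way this contradicts $x\neq z$. The cleanest route is to note that fibers over $B_\infty(g)$ are singletons (they are compact, connected, and meet $B_\infty(f)$ in exactly one point while containing no boundary point of $B_\infty(f)$, since such points map to $J_\sim$). This forces $\phat^{-1}(B_\infty(g))\subset B_\infty(f)$, and the identity follows by intersecting with $D_{i,y}$.

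For the component count, $\phat$ restricts to a homeomorphism $D_{i,y}\cap B_\infty(f)\to D'_{i,y}\cap B_\infty(g)$, so it suffices to count components in the model. Transport by the Böttcher coordinate of $g$, namely $\phat\circ\Psi$ on $\D$, which is a homeomorphism onto $B_\infty(g)$. Then $D'_{i,y}\cap B_\infty(g)$ corresponds to the part of $\D$ cut off by the geodesic crosscuts. In the endpoint and Jordan cases there is a single crosscut $\gamma_i$, and the region is the component of $\D\setminus\gamma_i$ adjacent to the short arc containing $t$. This region is connected, and it is the right one because $D'_{i,y}\ni y$ and the boundary Jordan curve meets $B_\infty(g)$ only along $\phat\circ\Psi(\gamma_i)$; in the Jordan case the arc $p_i$ lies in $B$, which is disjoint from $B_\infty(g)$. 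In the interval case there are two disjoint crosscuts $\gamma_i,\gamma'_i$, and $\D\setminus(\gamma_i\cup\gamma'_i)$ has three components: two lunes near $t$ and $t'$, and a middle piece. The Jordan curve $\phat\Psi(\gamma_i\cup\gamma'_i)$ separates the lunes from the middle piece, so $D'_{i,y}\cap B_\infty(g)$ is exactly the union of the two lunes, giving two components $D^1_{i,y},D^2_{i,y}$.

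The main obstacle is the first step, namely ruling out that a point outside $B_\infty(f)$ is sent into $B_\infty(g)$, which requires care about fibers meeting $\partial B_\infty(f)$. I would handle it via the fiber/impression description (property (3)) as outlined. A secondary check is identifying which side of each Jordan curve is $D'_{i,y}$ in the model; this uses that the lunes accumulate on $y$ while the middle piece contains $\infty$.
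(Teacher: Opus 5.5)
Your proposal is correct and follows the same route as the paper: you reduce the set identity to $\phat^{-1}(B_\infty(g))=B_\infty(f)$, then transfer the component count through the homeomorphism $\phat|_{B_\infty(f)}$. The paper takes the preimage identity directly from $B_\infty(g)=\C\setminus\phat(K_f)$, so your fiber argument is valid but not needed, and the paper leaves implicit the count of components of $D'_{i,y}\cap B_\infty(g)$, which your crosscut-and-lune analysis in $\D$ via $\phat\circ\Psi$ makes explicit.
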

\begin{proof}
The inclusion $D_{i,y} \cap B_\infty(f) \subset \phat^{-1}(D'_{i,y} \cap B_\infty(g))$ is immediate by the definition of $D_{i,y}$ and the fact that $\phat(B_\infty(f)) = B_\infty(g)$. The reverse inclusion $D_{i,y} \cap B_\infty(f) \supset \phat^{-1}(D'_{i,y} \cap B_\infty(g))$ is also clear since $\phat^{-1}(B_\infty(g)) = B_\infty(f)$ and 
\[
\phat^{-1}(D'_{i,y} \cap B_\infty(g)) \subset \phat^{-1}(D'_{i,y}) = D_{i,y}.
\]
The second conclusion follows since $\phat^{-1}|_{B_\infty(g)}$ is a homeomorphism onto $B_\infty(f)$.
\end{proof}

Proposition \ref{prop:D_i-cap-B_infty-2-conn-comps} shows that whenever $J_\sim$ is an interval and $y$ corresponds to an interval point $t \in \Ss^1$, $D_{i,y} \cap B_\infty(f)$ has two connected components $D_{i,y}^1$ and $D_{i,y}^2$. The next Proposition shows that any neighborhood $N$ of a point in the impression $\imp(t)$ must intersect both $D_{i,y}^1$ and $D_{i,y}^2$. Equivalently, the image of the neighborhood $N$ under $\phat$ intersects both $(D'_{i,y})^1$ and $(D'_{i,y})^2$.
\begin{proposition}
\label{prop:D_i-cap-B_infty-2-conn-comps}
Suppose that $J_\sim$ is an interval. Let $t \in \Ss^1$ be an interval point and set $y = \iota(t)$. Then, given any point $x \in \imp(t)$, any neighborhood $N$ of $x$ has non-empty intersection with both $D_{i,y}^1$ and $D_{i,y}^2$ (as defined in Proposition \ref{prop:D_i-cap-b_infty-connected}). 
\end{proposition}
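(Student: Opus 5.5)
The argument works in the model and then transfers back through $\phat$. Recall the setup: $J_\sim$ is an interval, $t$ is an interval point, and $t'\neq t$ is the other angle with $\iota(t')=y$. The Jordan curve $\phat\circ\Psi(\gamma_i\cup\gamma'_i)$ bounds $D'_{i,y}$. Since $J_\sim$ is an arc, $B_\infty(g)=\C\setminus J_\sim$. The two crosscuts $\phat\circ\Psi(\gamma_i)$ and $\phat\circ\Psi(\gamma'_i)$ cut $D'_{i,y}\cap B_\infty(g)$ into two pieces $(D'_{i,y})^1$ and $(D'_{i,y})^2$, one on each side of the arc $J_\sim$ near $y$. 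In the disk picture these pieces correspond to the two regions of $\D$ cut off by the geodesics $\gamma_i$ and $\gamma'_i$ and adjacent to the arcs of $\Ss^1$ containing $t$ and $t'$. Since $\phat\circ\Psi$ is a homeomorphism $\D\to B_\infty(g)$, we have $D^1_{i,y}=\Psi(A_i)$ and $D^2_{i,y}=\Psi(A'_i)$. Here $A_i$ is the region of $\D$ between $\gamma_i$ and the short arc through $t$, and $A'_i$ is the analogous region for $t'$.

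Next, fix $x\in\imp(t)$ and a neighborhood $N$ of $x$. By definition of impression there is a sequence $w_n\to t$ in $\D$ with $\Psi(w_n)\to x$. Eventually $w_n\in A_i$, so $\Psi(w_n)\in N\cap D^1_{i,y}$, which gives $N\cap D^1_{i,y}\neq\emptyset$.

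The main obstacle is the other component, since a priori $x$ need not lie in $\imp(t')$. The plan is to show $\imp(t)\subset\overline{\Psi(A'_j)}$ for every $j$, which in particular gives $x\in\overline{D^2_{i,y}}$. I would argue by contradiction. Suppose some $x\in\imp(t)$ has a neighborhood $N$ with $N\cap D^2_{i,y}=\emptyset$. Shrinking $N$ to a small disk about $x$, the set $N\cap B_\infty(f)$ then lies in $\Psi(\D\setminus \overline{A'_i})$.

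To get a contradiction, use that $J_\sim$ is an interval, so the fiber $\phat^{-1}(y)$ separates nothing in $B_\infty(f)$ except along the two accesses. More precisely, $\phat(N)$ is a connected neighborhood of $y=\phat(x)$, because $\phat$ is monotone, continuous, and surjective, hence closed. Any connected neighborhood of the interior point $y$ of the arc $J_\sim$ meets both sides of the arc in $\C\setminus J_\sim$. So $\phat(N)$ meets $(D'_{i,y})^2$ near $y$, and pulling back by the homeomorphism $\phat|_{B_\infty(f)}$ gives $N\cap D^2_{i,y}\neq\emptyset$. This contradicts the assumption.

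The delicate point is justifying that $\phat(N)$ is a neighborhood of $y$, i.e.\ that $\phat$ is open at points of the fiber. I would instead argue directly, as follows. Choose $j\ge i$ so that $D_{j,y}\subset N\cup(\text{the component of }D_{i,y}\text{ meeting }N)$. Then use the connectedness of $D_{j,y}$ from Proposition~\ref{prop:D_i-is-base}, together with the fact that $D_{j,y}\setminus \phat^{-1}(J_\sim)$ has both components nonempty, to force every neighborhood of each fiber point to see both components. If a neighborhood of $x$ missed $D^2_{i,y}$, then $D_{j,y}$ could be split into two disjoint nonempty open sets, namely the part near $x$ and the part near $\overline{D^2_{j,y}}$. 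That would contradict the connectedness of $D_{j,y}$.
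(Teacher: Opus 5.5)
Your argument that $N$ meets $D^1_{i,y}$ is fine. The half concerning $D^2_{i,y}$ has a genuine gap, and neither of your two routes closes it.

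The assertion that $\phat(N)$ is a neighborhood of $y$ is not merely unjustified; it is false in exactly the case of interest, when $\phat^{-1}(y)$ is nondegenerate and $N$ is small. Indeed, $\phat$ is injective on $B_\infty(f)$ and $\phat^{-1}(B_\infty(g))=B_\infty(f)$. So an open set $W\ni y$ with $W\subset\phat(N)$ would force $\phat^{-1}(W)\cap B_\infty(f)\subset N$. But $\phat^{-1}(W)$ is a neighborhood of the entire fiber, and the fiber contains points of $J_f=\partial B_\infty(f)$ far from $x$. Closedness of $\phat$ only gives that $\phat(N)$ is a neighborhood of $y$ when $N\supset\phat^{-1}(y)$.

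Your fallback argument fails for a related reason. The assumption that some neighborhood of $x$ misses $D^2_{i,y}$ is a condition at a single point of the fiber, and it produces no separation of $D_{j,y}$. The fiber $\phat^{-1}(y)$ is a connected subset of $D_{j,y}$ containing both $x$ and the points of $\imp(t')\subset\overline{D^2_{j,y}}$. Your ``part near $x$'' and ``part near $\overline{D^2_{j,y}}$'' are therefore joined through the fiber, and connectedness of $D_{j,y}$ is entirely compatible with $x$ being visible from only one side. The preliminary choice of $j$ with $D_{j,y}\subset N\cup(\cdots)$ is also not meaningful: $D_{i,y}$ is connected, and $D_{j,y}$ contains the whole fiber.

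What is missing is a genuinely global input. The $D^2_{i,y}$ are $\Psi$-images of crosscut neighborhoods shrinking to $t'$, so the statement is equivalent to $\imp(t)\subset\imp(t')$, which is Corollary~\ref{cor:imps-are-equal}. This concerns which points of the fiber are seen from which side. The connectedness of $D_{j,y}$ in Proposition~\ref{prop:D_i-is-base} comes from point-set arguments that never use how $\imp(t)$ and $\imp(t')$ sit inside the fiber, so there is no reason it should force this inclusion.

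The paper obtains the inclusion from the dynamics, as follows.
\begin{enumerate}
\item Let $V$ be the set of points of $J_f$ that lie in a fiber over a non-endpoint of $J_\sim$ and have a neighborhood missing one of the two components. Then $V$ is open in $J_f$.
\item $V$, together with the fibers over the two endpoints of $J_\sim$, is forward invariant under $f$. The reason is that near a non-endpoint, $g$ carries one side of the arc into one side near the image point: a connected subset of $B_\infty(g)$ meeting both sides would have to cross an external ray landing at an endpoint of $J_\sim$.
\item If $V\neq\emptyset$, the blow-up property of $J_f$ forces this invariant set to be all of $J_f$.
\item That is impossible, because \cite[Theorem 48]{block-lc-models} provides a point of $\imp(t)\cap\imp(t')$, and such a point is a limit from both sides.
\end{enumerate}
Some argument of this kind, using $f$ and not only the topology of the neighborhoods $D_{j,y}$, is what your proposal lacks.
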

\begin{proof}
Let
\begin{align}
V := \{x \in J_f :\;& x \in \phat^{-1}(y) \text{ for some non-endpoint } y \in J_{\sim}, \notag\\
&\text{and there exists an open neighborhood } N \ni x \text{ such that } \notag \\ 
&N \cap D_{i,y}^j = \emptyset \text{ for some } j \in \{1,2\}\}. \notag
\end{align}
Since the defining condition is clearly open, the set $V$ is open in $J_f$.

Let $V'$ be the union of $V$ with the impressions $\imp(e_1)$ and $\imp(e_2)$, where $e_1,e_2 \in \Ss^1$ correspond to the two endpoints of $J_{\sim}$. We claim that $f(V') \subset V'$.

First suppose that $x \in \imp(e_k)$ for some $k \in \{1,2\}$. Since endpoint fibers map to endpoint fibers, we have
\[
f(\imp(e_k)) = \imp(e_\ell)
\]
for some $\ell \in \{1,2\}$. Hence $f(x) \in V'$.

Now suppose that $x \in V$. Let $y := \phat(x)$, and let $N$ be an open neighborhood of $x$ as in the definition of $V$. If $f(x)$ belongs to the fiber of an endpoint of $J_{\sim}$, then $f(x) \in V'$ and there is nothing to prove. Thus we may assume that
\[
f(x) \in \phat^{-1}(y')
\]
for some non-endpoint $y' \in J_{\sim}$. Our goal is to show that after shrinking $N$ if necessary, $f(N)$ only intersects one of $D^1_{i,y'}$ and $D^2_{i,y'}$.

Set 
\[
N' := \phat(N). 
\]
After shrinking $N$ if necessary, we may assume that $N'$ is contained in the euclidean closure of one of
\[
(D'_{i,y})^1 := \phat(D_{i,y}^1)
\qquad \text{or} \qquad
(D'_{i,y})^2 := \phat(D_{i,y}^2).
\]
Without loss of generality assume that
\[
N' \subset \overline{(D'_{i,y})^1}.
\]
Note that 
\begin{equation}
\label{eqn:intersection-of-images}
\bigcap_{i \ge 1}g\left(\overline{(D'_{i,y})^1}\right) = y',
\end{equation}
by continuity of $g$.
Choose $i$ sufficiently large so that $g\left((D'_{i,y})^1\right)$ is contained in one of 
\[
(D'_{k,y'})^1 \qquad \text{or} \qquad (D'_{k,y'})^2,
\]
for some $k \ge 1$. We can ensure this since if $g\left((D'_{i,y})^1\right)$ continues to intersect both $(D'_{k,y'})^1 \text{ and }  (D'_{k,y'})^2$ in $p_i$ and $p'_i$, respectively, then because $g\left((D'_{i,y})^1\right) \subset B_\infty(g)$ is connected, it must also contain a path in $B_\infty(g)$ connecting $p_i$ and $p'_i$. Such paths must intersect at least one of the two rays from $\infty$ landing at the endpoints of $J_\sim$. Taking limits it follows that $$\bigcap_{i \ge 1}g\left(\overline{(D'_{i,y})^1}\right)$$ must also intersect one of these rays which contradicts equation \ref{eqn:intersection-of-images}. So, suppose without loss of generality that 
\[
g\left((D'_{i,y})^1\right)  \subset (D'_{k,y'})^1.
\]
We must have
\[
\phat(f(N)) = g(N') \subset g\left(\overline{(D'_{i,y})^1}\right) \subset \overline{(D'_{k,y'})^1}.
\]
Hence, $f(N)$ can only intersect $D_{k,y'}^1$ as desired. This shows that $f(x) \in V$ and therefore also $f(x) \in V'$. So, indeed
\[
f(V') \subset V'.
\]

To complete the proof of the proposition, it remains to show that $V$ is empty. Assume otherwise. Since $V$ is nonempty and open, it follows from \cite[Theorem 4.10]{milnor} that
\[
\bigcup_{n \ge 0} f^n(V)
\]
contains all of $J_f$. On the other hand, since $f(V') \subset V'$, we have
\[
f^n(V) \subset V'
\qquad \text{for all } n \ge 0.
\]
Hence $V'$ contains all of $J_f$. Thus $V' = J_f$. But, this is impossible. Indeed, if $t,t' \in \Ss^1$ are distinct and satisfy $y = \iota(t)=\iota(t')$, then
\[
\imp(t) \cap \imp(t') \neq \emptyset,
\]
by \cite[Theorem 48]{block-lc-models}. Any point
\[
x \in \imp(t) \cap \imp(t')
\]
must lie on the boundary of both $D_{i,y}^1$ and $D_{i,y}^2$ for all $i \ge 1$, and therefore cannot belong to $V'$. This contradiction shows that $V=\emptyset$.
\end{proof}

As an important consequence of Proposition \ref{prop:D_i-cap-B_infty-2-conn-comps} we obtain the next result.
\begin{corollary}
\label{cor:imps-are-equal}
Suppose that $J_\sim$ is an interval. For any $t,t' \in \Ss^1$, with $\iota(t) = \iota(t')$ we have $$\imp(t) = \imp(t').$$
\end{corollary}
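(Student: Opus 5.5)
By symmetry it suffices to show $\imp(t)\subset\imp(t')$ whenever $\iota(t)=\iota(t')=:y$ and $t\neq t'$. The plan is to read off the impressions from the two components of $D_{i,y}\cap B_\infty(f)$ supplied by Proposition~\ref{prop:D_i-cap-b_infty-connected}. We split according to whether $t$ is an interval point. If $t$ is a $J_\sim$-endpoint, then $[t]=\{t\}$ by Definition~\ref{def:endpoints}, so $t'=t$ and there is nothing to prove. We may therefore assume that $t$, and hence also $t'$, is an interval point lying in the same $\sim_f$-class.

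In the interval case, Construction~\ref{cons:base-at-model} uses the two points $t,t'$ of the class of $y$ together with geodesics $\gamma_i,\gamma'_i$ near them. The crosscut $\phat\circ\Psi(\gamma_i\cup\gamma'_i)$ cuts off $D'_{i,y}$, and $D'_{i,y}\cap B_\infty(g)$ has two components. Pulling back through the homeomorphism $\phat|_{B_\infty(f)}$ and through $\Psi$, we expect, after relabeling, that
\[
\Psi^{-1}(D^1_{i,y})=\D\cap \Omega_i,\qquad \Psi^{-1}(D^2_{i,y})=\D\cap\Omega'_i,
\]
where $\Omega_i$ is the region of $\D$ cut off by $\gamma_i$ that contains $t$ in its closure, and $\Omega'_i$ is the analogous region cut off by $\gamma'_i$ containing $t'$. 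As $i\to\infty$ these regions shrink to $t$ and $t'$ respectively. Verifying this identification is the first step; it amounts to tracking which side of each geodesic is mapped into $D'_{i,y}$.

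Now let $x\in\imp(t)$ and $i\ge1$. Proposition~\ref{prop:D_i-cap-B_infty-2-conn-comps} says that every neighborhood of $x$ meets $D^2_{i,y}=\Psi(\Omega'_i)$. We take neighborhoods $B(x,1/k)$ and, diagonally in $i=k$, choose points $w_k\in\Omega'_k$ with $|\Psi(w_k)-x|<1/k$. Since $\Omega'_k$ shrinks to $t'$, we get $w_k\to t'$ while $\Psi(w_k)\to x$, so $x\in\imp(t')$ by the definition of impression. The same argument with the roles of $t$ and $t'$ swapped gives the reverse inclusion. If the class of $y$ contains more than two points, we apply the construction to whichever pair $t,t'$ we are comparing, since Construction~\ref{cons:base-at-model} only requires two points of the class.

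The main obstacle is the identification of $D^2_{i,y}$ with $\Psi$ of a region accumulating only at $t'$. Concretely, one must rule out that $\Psi^{-1}(D^2_{i,y})$ contains points of $\D$ close to $\Ss^1$ away from $t'$. We handle this using the fact that $D'_{i,y}$ is bounded by the Jordan curve $\phat\circ\Psi(\gamma_i\cup\gamma'_i)$. Its intersection with $B_\infty(g)$ is exactly the union of the two sides of the crosscuts $\phat\circ\Psi(\gamma_i)$ and $\phat\circ\Psi(\gamma'_i)$ facing $y$, and conjugating by $\phat\circ\Psi$ turns these sides into $\Omega_i$ and $\Omega'_i$.
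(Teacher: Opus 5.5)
Your argument is correct and is essentially the paper's proof. Both take $x\in\imp(t)$ and use Proposition~\ref{prop:D_i-cap-B_infty-2-conn-comps} to find points of the component of $D_{i,y}\cap B_\infty(f)$ attached to $t'$ converging to $x$; their images under $\Phi_f$ then converge to $t'$, which puts $x$ in $\imp(t')$. The differences are cosmetic: the paper picks sequences in both components at once, concluding $x\in\imp(t)\cap\imp(t')$, and takes the matching of components with regions of $\D$ shrinking to $t$ and $t'$ as given by the construction, whereas you spell that matching out and also dispose of the trivial endpoint case explicitly.
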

\begin{proof}
Let $y = \iota(t)$ and take $x \in \imp(t)$. By Proposition \ref{prop:D_i-cap-B_infty-2-conn-comps}, any neighborhood of $x$ must intersect both $D_{i,y}^1$ and $D_{i,y}^2$ for all $i \ge 1$. Thus, we can construct sequences $\{x_i\}$ and $\{x_i'\}$ such that $x_i \in D_{i,y}^1$ and $x_i \in D_{i,y}^2$ for all $i \ge 1$ and $x_i, x'_i \to x$ as $i \to \infty$. By the defintion of $D_{i,y}^1$ and $D_{i,y}^2$ we must have 
\[
\Phi_f(x_i) \to t_1 \text{ and } \Phi_f(x_i') \to t_2,
\]
with $\{t_1,t_2\} = \{t,t'\}$. Thus, $x \in \imp(t') \cap \imp(t)$. Since $x$ was arbitrary we conclude
\[
\imp(t) \subset \imp(t) \cap \imp(t') \subset \imp(t').
\]
Repeating the same argument with $t'$, we get the reverse inclusion and the corollary follows. 
\end{proof}

The next important lemma is the final result of this section.  Roughly speaking, it shows that holomorphic functions obtained from a special class of analytic continuations are locally compatible with the equivalence relation $\sim_f$ induced on $J_f$.  This lemma will be a key ingredient in the proof of Theorem~\ref{thm:classification-of-bialgebraic}, and will also play an important role in showing that certain functions induce local symmetries of $J_f$.
\begin{lemma}
\label{lem:h-respects-lamination}
Let $t \in \Ss^1$ be a $J_\sim$-endpoint or Jordan point and let $y = \iota(t)$. Let $(t_i, t'_i, \gamma_i)_{i \ge 1}$ be as in Construction \ref{cons:base-at-model} and define $\tilde{\gamma}_i$ as the geodesic in $\C \setminus \bar\D$ connecting $t_i$ to $t'_i$. Let $U_i$ be the bounded component of $\C \setminus (\gamma_i \cup \tilde{\gamma_i})$ which is a simply connected neighborhood base at $t$. Suppose that for some $i \ge 1$, there is a holomorphic function $h:U_i \lra h(U_i)$ that satisfies 
\begin{equation}
h(U_i \cap \partial\D) = h(U_i) \cap \partial\D \: \text{ and } \: h(U_i \cap \D) \subset \D.
\end{equation}
Moreover, assume that $D_{i,y} \cap B_\infty(f) \subset \Psi(U_i \cap \D)$ and there is a holomorphic function $\tilde{h}: D_{i,y} \lra \tilde{h}(D_{i,y})$ that satisfies
\begin{equation}
\label{eqn:h-h-tilde}
\tilde{h}(z) = \Psi(h(\Phi(z))),
\end{equation}
for all $z \in D_{i,y} \cap B_\infty(f)$. Then $h$ and $\tilde{h}$ have the following properties:
\begin{enumerate}
    \item For every $\theta \in U_i \cap \partial\D$ with $\imp(\theta) \subset D_{i,y}$, we have $\tilde h(\imp(\theta)) = \imp(h(\theta))$ ; and
    \item For any $\sim$-class $[x]$ of $J_f$ such that $x \in D_{i,y}$, we have $\tilde h([x]) \subset [\tilde{h}(x)]$; and
    
    \item For any $\theta, \theta' \in U_i \cap \partial\D$ with $\imp(\theta), \imp(\theta') \subset D_{i,y}$, we have $$\theta \sim_f \theta' \ \Longrightarrow \ h(\theta) \sim_f h(\theta').$$  
\end{enumerate}

\end{lemma}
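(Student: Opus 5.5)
The plan is to prove the three parts in order. Part (1) is a direct limiting argument using the conjugation identity \eqref{eqn:h-h-tilde}. Parts (2) and (3) are then deduced from (1) together with the structure of the fibers of $\phat$ recorded in Section~\ref{sec:Bottcher}.

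\textbf{Part (1).} Fix $\theta\in U_i\cap\partial\D$ with $\imp(\theta)\subset D_{i,y}$, and take $x\in\imp(\theta)$.
\begin{itemize}
\item By definition there is a sequence $w_n\in\D$ with $w_n\to\theta$ and $\Psi(w_n)\to x$.
\item Since $D_{i,y}$ is an open neighborhood of $x$, eventually $\Psi(w_n)\in D_{i,y}\cap B_\infty(f)\subset\Psi(U_i\cap\D)$. Because $\Psi$ is injective, $w_n\in U_i\cap \D$, so $\Phi(\Psi(w_n))=w_n$.
\item Then \eqref{eqn:h-h-tilde} gives $\tilde h(\Psi(w_n))=\Psi(h(w_n))$.
\item By continuity of $\tilde h$ at $x$, the left side tends to $\tilde h(x)$.
\item On the right side, $h(w_n)\in\D$ and $h(w_n)\to h(\theta)\in\partial\D$. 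Hence $\tilde h(x)\in\imp(h(\theta))$.
\end{itemize}
This proves $\tilde h(\imp(\theta))\subset \imp(h(\theta))$.

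For the reverse inclusion, take $z\in\imp(h(\theta))$ with $u_n\to h(\theta)$ and $\Psi(u_n)\to z$.
\begin{itemize}
\item Since $h$ is a nonconstant holomorphic function, it is open near $\theta$; it is also a local homeomorphism there, because $h$ maps the arc $U_i\cap\partial\D$ onto an arc of $\partial\D$, which forces $h'(\theta)\neq 0$ by Schwarz reflection.
\item So we can write $u_n=h(w_n)$ with $w_n\to\theta$, and $w_n\in\D$ because $h(U_i\cap\D)\subset\D$ together with $h^{-1}(\D)\cap U_i=U_i\cap\D$ locally.
\item Pass to a subsequence so that $\Psi(w_n)\to x$. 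Then $x\in\imp(\theta)\subset D_{i,y}$.
\item Applying \eqref{eqn:h-h-tilde} again gives $z=\tilde h(x)$.
\end{itemize}

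The main technical point in Part (1) is this local invertibility of $h$ at boundary points of $\partial\D$. I would establish it by reflecting $h$ across the arc: $h$ maps the arc into the circle, so it extends by $z\mapsto 1/\overline{h(1/\bar z)}$. A critical point on the arc would make $h$ fold $\D$ partly outside $\D$, which contradicts $h(U_i\cap\D)\subset\D$.

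\textbf{Part (2).} Here $[x]$ means the fiber $\phat^{-1}(\phat(x))$, which is a union of impressions by \cite[Theorem 2]{block-lc-models}. The argument runs as follows.
\begin{itemize}
\item Any two points of the fiber are joined by a chain of impressions $\imp(\theta_1),\dots,\imp(\theta_k)$, where consecutive impressions intersect (by \cite[Theorem 48]{block-lc-models}) and all $\theta_j$ lie in one $\sim_f$-class.
\item Because $t$ is an endpoint or Jordan point, $D_{i,y}$ is a union of full fibers, so these impressions lie in $D_{i,y}$.
\item For every $\theta_j$ in the class, $\theta_j$ lies in the arc $U_i\cap\partial\D$: the geodesic $\gamma_i$ separates the class from the complementary arc, by (E2) and the definition of $D'_{i,y}$.
\item By Part (1), $\tilde h$ maps the chain to a chain of impressions $\imp(h(\theta_j))$ with nonempty consecutive intersections.
\end{itemize}
It then remains to show that intersecting impressions have the same image under $\iota$, i.e.\ lie in one fiber. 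This holds because $\phat$ contracts each impression to a point, so two impressions sharing a point map to the same point under $\phat$. Hence the image chain lies in the single fiber $[\tilde h(x)]$.

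\textbf{Part (3).} Part (3) follows from Part (2). If $\theta\sim_f\theta'$, then $\imp(\theta)$ and $\imp(\theta')$ lie in one fiber $[x]$. By Part (2) together with Part (1), $\imp(h(\theta))$ and $\imp(h(\theta'))$ both lie in $[\tilde h(x)]$, so $\iota(h(\theta))=\iota(h(\theta'))$.

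I expect the main obstacles to be two. The first is the boundary local-invertibility of $h$ in the reverse inclusion of Part (1). The second is verifying that every angle in a class meeting $D_{i,y}$ lies in the arc $U_i\cap\partial\D$, so that Part (1) applies to all of them; this is the fact used in the chain argument of Part (2).
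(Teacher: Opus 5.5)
Your proof is correct. Part (1) is the paper's argument; you are more careful than the paper about why the preimages $w_n$ in the reverse inclusion can be taken in $\D$, where the paper only says ``local surjectivity''.

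The real difference is in Part (2). The paper never uses an explicit description of the fibers of $\phat$. It goes back to the transfinite construction $\sim=\bigcup_\alpha\sim_\alpha$ of Blokh--Curry--Oversteegen and proves $\tilde h([x]_\alpha)\subset[\tilde h(x)]_\alpha$ for every ordinal $\alpha$, modeled on the proof of \cite[Theorem~30]{block-lc-models}:
\begin{enumerate}
\item The base case, where $\sim_0$ is generated by finite chains of impressions with connected union, is exactly your chain argument via Part (1).
\item Limit ordinals are immediate.
\item At a successor ordinal, one pushes forward convergent sequences of $\sim_\beta$-classes. The Blaschke selection theorem then gives limit continua forming a chain from $\tilde h(x)$ to $\tilde h(y)$.
\end{enumerate}

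You instead use the statement $\iota(\theta)=\iota(\theta')\Rightarrow\imp(\theta)\cap\imp(\theta')\neq\emptyset$ from \cite[Theorem~48]{block-lc-models}. The paper itself relies on this in the proof of Proposition~\ref{prop:D_i-cap-B_infty-2-conn-comps}. With it, every fiber is a union of pairwise intersecting impressions of angles in one $\sim_f$-class. A chain of length two then joins any two points, so the induction collapses to its base case.

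What your route buys:
\begin{itemize}
\item It is shorter and treats Jordan points and endpoints uniformly.
\item It makes explicit a step the paper uses tacitly in its base case: every angle $\theta$ with $\iota(\theta)\in D'_{i,y}$ lies in the arc $U_i\cap\partial\D$, so that Part (1) applies to it.
\end{itemize}

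What the paper's route buys is independence from that intersection statement, since it uses only how $\sim$ is built. Your argument is exactly as strong as the cited theorem. In the general continuum-theoretic framework, the stage-zero relation (finitely many intersecting impressions) need not already give the finest decomposition, which is why the transfinite procedure exists. If you wanted to avoid relying on Theorem~48, you would need the paper's induction.
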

\begin{proof}
We begin with \((1)\) and, for ease of notation, we denote $D_{i,y}$ by $D_i$. Let $\{x_j\}_{j\ge 1}\subset U_i\cap \D$ be a sequence with $x_j\to \theta\in \Ss^1$ and suppose that $\Psi(x_j)\to z\in \imp(\theta)$. Then $\{h(x_j)\}_{j\ge 1}\subset h(U_i)\cap \D$ converges to $h(\theta)$. Using \eqref{eqn:h-h-tilde} and continuity, we obtain
\[
\tilde h(z)
=\lim_{j\to\infty}\tilde h(\Psi(x_j))
=\lim_{j\to\infty}\Psi\bigl(h(x_j)\bigr)
\in \imp\bigl(h(\theta)\bigr).
\]
Hence $\tilde h(\imp(\theta))\subset \imp(h(\theta))$. 

The reverse inclusion is shown similarly. Suppose we are given a sequence $\{y_j\}_{j \ge 1} \subset h(U_i) \cap \D$ converging to $h(\theta)$ such that $\Psi(y_j) \to w \in \imp(h(\theta))$. Then, using local surjectivity of $h$ to find a sequence $\{x_j\}_{j \ge 1}$ such that $x_j \to \theta$ and $y_j = h(x_j)$. Then, 
\[
\lim_{j \to \infty} \Psi(y_j) = \lim_{j \to \infty} \Psi(h(x_j)) = \lim_{j \to \infty} \tilde h(\Psi(x_j)) \subset \tilde h(\imp(\theta)).  
\]
Therefore, $\imp(h(\theta)) \subset \tilde h(\imp(\theta))$ and we are done with (1).

 Note that \((2)\), and \((3)\) are clear whenever $J_\sim$ is a Jordan curve since classes correspond to single impressions and the conclusions all follow from part \((1)\). So, we focus on the case where $t$ corresponds to an endpoint. 
 
 We prove \((2)\), and \((3)\) will follow immediately as a consequence of \((2)\) and $(1)$. Recall that $\sim$ is constructed in \cite[Section~4]{block-lc-models} by a transfinite procedure: to each ordinal $\alpha$ one associates an equivalence relation $\sim_\alpha$, with classes denoted by $[\cdot]_\alpha$. It suffices to show that
\[
\tilde h\bigl([x]_\alpha\bigr)\subset [\tilde h(x)]_\alpha
\]
for every ordinal $\alpha$ and every class $[x]_\alpha$ with $x \in D_i$. We proceed by transfinite induction on $\alpha$. Our approach is inspired by the proof of \cite[Theorem 30]{block-lc-models}.

The base case is $\alpha = 0$. By definition, $x\sim_0 y$ if and only if there exists a finite chain of impressions
\[
x \in K_1,\dots,K_s=\imp(\theta')  \ni y
\]
such that $K_1\cup\cdots\cup K_s$ is connected. Moreover, since $[x]_0\subset [x]_{\sim f} \subset D_i$, we have $K_j\subset D_i$ for each $j\in\{1,\dots,s\}$. By \((1)\), the sets $\tilde h(K_j)$ are impressions with $\tilde h(x) \in \tilde h(K_1)$ and $\tilde h(y) \in \tilde h(K_s)$, and $\tilde h(K_1\cup\cdots\cup K_s)$ is connected because $\tilde h$ is holomorphic on $D_i$. Therefore $\tilde h(x)\sim_0 \tilde h(y)$, and hence
\[
\tilde h([x]_0)\subset [\tilde h(x)]_0
\qquad\text{whenever }[x]_0\subset D_i.
\]

Now suppose that $\tilde h$ has the property that $\tilde h([x]_\beta) \subset [\tilde h(x)]_\beta$ whenever $[x]_\beta \subset D_i$ for all $\beta < \alpha$. Suppose $\alpha$ is a limit ordinal. Then, $x \sim_\alpha y$ if and only of $x \sim_\beta y$ for some $\beta < \alpha$. Then, by the inductive hypothesis, we must have $\tilde h(x) \sim_\beta \tilde h(y)$, which shows that $\tilde h(x) \sim_\alpha \tilde h(y)$. Hence, $\tilde h([x]_\alpha) \subset [\tilde h(x)]_\alpha$.

Now suppose that $\alpha$ has an immediate predecessor $\beta$. By definition, $x\sim_\alpha y$ if there exist finitely many sequences of $\sim_\beta$-classes
\[
\{K_{j,1}\}_{j \ge 1}\to K_1,\ \dots,\ \{K_{j,s}\}_{j \ge 1}\to K_s,
\]
such that $K_1,\dots,K_s$ form a chain of continua that joins any $x$ to $y$. Since $[x]_\alpha \subset [x]_\sim\subset D_i$, each $K_r$ is contained in $D_i$. Passing to subsequences if necessary, we may therefore assume that $K_{j,r}\subset D_i$ for all $1\le r\le s$ and all $j\ge 1$.

Applying $\tilde h$ and using continuity, we obtain
\[
\{\tilde h(K_{j,1})\}_{j \ge 1}\to \tilde h(K_1),\ \dots,\  \{\tilde h(K_{j,s})\}_{j \ge 1}\to \tilde h(K_s),
\]
and the continua $\tilde h(K_1),\dots,\tilde h(K_s)$ form a chain joining $\tilde h(x)$ to $\tilde h(y)$. By the inductive hypothesis, for each $j$ and $r$, the set $\tilde h(K_{j,r})$ is contained in some (possibly larger) $\sim_\beta$-class, which we denote by $\widetilde K_{j,r}$. Using the Blaschke selection theorem, after passing to further subsequences, we may assume that $\widetilde K_{j,r}$ converges to a continuum $\widetilde K_r\supset \tilde h(K_r)$ for each $1\le r\le s$. Then $\widetilde K_1,\dots,\widetilde K_s$ form a chain connecting $\tilde h(x)$ to $\tilde h(y)$, and hence $\tilde h(x)\sim_\alpha \tilde h(y)$. This completes the induction and the proof.
\end{proof}

\section{Algebraic local symmetries of polynomial Julia sets}
\label{sec:alg-symms}
\label{def:loc-sym}
In this short section, we prove a classification theorem for algebraic local symmetries of Julia sets of non-exceptional polynomials. We begin by recalling \cite[Definition~1]{Levin-symm}.
\begin{definition}[Local symmetries of $J_f$]
Let $U$ be a nonempty open subset of $\C$ and $\sigma: U \lra \sigma(U)$ be a holomorphic function. We say that $\sigma$ is a \textit{local symmetry of $J_f$} whenever
\[
\sigma^{-1}(J_f) \cap U = U \cap J_f.
\]
\end{definition}

Our proof of Theorem \ref{thm:classification-of-bialgebraic} relies heavily upon using the assumption that the image of an algebraic subvariety is an algebraic subvariety of the same dimension to construct a local symmetry of $J_f$ in the sense of Definition \ref{def:loc-sym}.  Local symmetries between locally connected Julia sets of polynomials of the same degree have been classified in \cite{Yusheng}. The case of pairs of rational functions has recently been studied in \cite{xie-ji} and \cite{Dujardin-Favre-Gauthier}. Such local symmetries tend to come from very specific algebraic functions. In fact, if $h:(U, U \cap J_f) \lra (V, V \cap J_g)$ is a local symmetry between $J_f$ and $J_g$, under some extra measure-theoretic assummptions (see \cite[Theorem 1.7]{xie-ji}), we can conclude that the graph $\{x \in U: (x,h(x))\}$ is contained in an $(f^a,g^b)$-preperiodic curve for some $a,b \ge 0$. The next proposition gives a similar classification of local symmetries of polynomial Julia sets that come from an algebraic function. 
\begin{proposition}
\label{prop:alg-symmetries}
Let $f$ be a non-exceptional polynomial of degree $d \ge 2$ and let $J_f$ be its Julia set. Suppose that $\sigma: U \lra \C$ is a holomorphic map such that $U \cap J_f \ne \emptyset$ and $(x, \sigma(x))$ gives a local parametrization of an irreducible algebraic curve $\mathcal{C}$ in $\C^2$. Moreover, suppose that
\[
\sigma^{-1}(J_f) \cap U = U \cap J_f.
\]
Then, $\mathcal{C}$ must be an $(f,f)$-preperiodic curve. 
\end{proposition}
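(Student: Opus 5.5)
The plan is to reduce the statement to existing classification results for local symmetries, and then to use algebraicity to pass from a local statement to the whole curve $\mathcal{C}$.

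\textbf{Step 1: the symmetry preserves the maximal measure.} By Levin's theory of local symmetries \cite{Levin-symm}, a non-exceptional polynomial admits only a restricted family of local symmetries. Every local symmetry $\sigma$ of $J_f$ on a domain $U$ meeting $J_f$ is then compatible with the dynamics, so I expect
\[
f^{a}\circ \sigma = f^{b}\quad \text{(or a relation of the form } f^a\circ\sigma=\tau\circ f^b)
\]
on a neighborhood of a point of $J_f$. Here $a,b\ge 0$ and $\tau$ is a symmetry of $J_f$ in the finite group $\Sigma_f$ of affine maps preserving $J_f$. Rather than rely on this directly, I would invoke the classification of local symmetries between polynomial Julia sets \cite{Yusheng}, or the rational-map version \cite[Theorem 1.7]{xie-ji} applied with $g=f$. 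Either result says that the graph of $\sigma$ lies in an $(f^{a},f^{b})$-preperiodic curve.

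The measure-theoretic hypothesis needed for \cite[Theorem 1.7]{xie-ji} should follow from two facts. First, $\sigma$ is a local homeomorphism of $J_f$ after shrinking $U$ away from the finitely many critical points of $\sigma$. Second, pushforwards of the equilibrium measure $\mu_f$ under local symmetries are locally proportional to $\mu_f$, by the standard harmonic-measure and Green-function argument: $G_f\circ\sigma$ and $G_f$ are both harmonic off $J_f$ and vanish exactly on $J_f$ near a point of $U\cap J_f$. Non-exceptionality rules out the degenerate smooth cases, namely circles and intervals, by Fatou's theorem.

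\textbf{Step 2: from local to global.} The previous step gives a germ of $\mathcal{C}$ contained in an algebraic curve $\mathcal{C}'$ satisfying $(f,f)^{N}(\mathcal{C}')=(f,f)^{M}(\mathcal{C}')$, after replacing $(f^a,f^b)$ by a common iterate. Since $\mathcal{C}$ is irreducible and shares a nonempty open subset with $\mathcal{C}'$, the identity principle gives $\mathcal{C}\subset \mathcal{C}'$. Hence $\mathcal{C}$ is an irreducible component of $\mathcal{C}'$. The map $(f,f)$ permutes the components of the forward orbit of $\mathcal{C}'$, which form a finite set. It follows that $\mathcal{C}$ itself is $(f,f)$-preperiodic.

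If \cite[Theorem 1.7]{xie-ji} demands an $(f^a,f^b)$-invariant curve, I would pass to $(f^{ab},f^{ab})$ using Medvedev--Scanlon \cite{Scanlon}. Their classification shows that preperiodic curves under $(f^{k},f^{k})$ are also preperiodic under $(f,f)$ for non-exceptional $f$.

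\textbf{Main obstacle.} The hardest step is verifying the hypotheses of the cited rigidity theorem, in particular the measure condition. I would do this by showing that $G_f\circ\sigma = c\,G_f$ near a point of $J_f$ with $c>0$. Both functions are nonnegative and harmonic on $U\setminus J_f$ with zero set $U\cap J_f$, so the boundary Harnack principle on the uniformly perfect set $J_f$ gives this comparability, and their Laplacians then correspond under $\sigma$. Taking $\Delta$ yields $\sigma^{*}\mu_f = c\,\mu_f$ locally, which is the required input for \cite{xie-ji}.
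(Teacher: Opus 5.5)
Your Step 2 (local to global via irreducibility) is fine, and so is reducing to a rigidity theorem with $g=f$. The gap is exactly the step you flag as the main obstacle: the measure hypothesis is never established. The identity $G_f\circ\sigma=c\,G_f$, equivalently $\sigma^*\mu_f=c\,\mu_f$, does not follow from potential theory.

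The boundary Harnack principle only gives two-sided bounds $C^{-1}G_f\le G_f\circ\sigma\le C\,G_f$ near a point of $J_f$. Positive harmonic functions on $U\cap B_\infty(f)$ that vanish on $U\cap J_f$ form a large cone, since their values on $\partial U\cap B_\infty(f)$ can be prescribed arbitrarily. So comparability says nothing about proportionality, and you cannot take Laplacians of an inequality to get an identity of measures.

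Your argument never uses non-exceptionality, and it visibly fails for $f(z)=z^2$. Take $\sigma(z)=(z-a)/(1-\bar a z)$ with $0<|a|<1$. It is a local symmetry of $J_f=\Ss^1$ that preserves the exterior. Both $\log|z|$ and $\log|\sigma(z)|$ are positive harmonic just outside $\overline{\D}$ and vanish on $\Ss^1$. Yet $\sigma^*\mu_f$ has the non-constant density $|\sigma'(z)|=(1-|a|^2)/|1-\bar a z|^2$ with respect to $\mu_f$. Fatou's theorem excludes such $J_f$ from the statement, but it plays no role in your measure step.

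There is a second problem. $G_f$ vanishes on all of $K_f$, not only on $J_f$. You have not shown that $\sigma$ maps $U\cap B_\infty(f)$ into $B_\infty(f)$ rather than into a bounded Fatou component, where $G_f\circ\sigma\equiv 0$. So the hypothesis of \cite[Theorem 1.7]{xie-ji} remains unverified; it is itself the rigidity content of the proposition. Your fallback \cite{Yusheng} needs $J_f$ locally connected, which is not assumed and need not hold where the paper applies this proposition.

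The paper verifies no measure condition. It takes the dynamical rigidity from \cite[Remark 3.4]{Dujardin-Favre-Gauthier}: after normalizing, $\sigma(U)\subsetneq U$ and some iterate $\sigma^k$ has graph in an $(f,f)$-preperiodic curve $C$ with $(f^{k_1},f^{k_1})(C)=(f^{k_2},f^{k_2})(C)$. The remaining steps are:
\begin{enumerate}
\item Set $\sigma_1=f^{k_1}\circ\sigma\circ f^{-k_1}$, so that $\sigma_1^{k}$ lies on an $(f^{k_2-k_1},f^{k_2-k_1})$-invariant curve.
\item Medvedev--Scanlon \cite{Scanlon} gives $\sigma_1^{\pm k}=g$, a polynomial commuting with $f^{k_2-k_1}$; $g\neq\mathrm{id}$ because $\sigma(U)\subsetneq U$.
\item Ritt \cite{Ritt} gives $f^a=g^b$, so $\sigma_1$ commutes with $f^a$ and its graph is $(f^a,f^a)$-invariant.
\item Undoing the conjugation shows $\mathcal{C}$ is preperiodic.
\end{enumerate}
Because the cited input only controls an iterate $\sigma^k$, your Step 2 alone would not finish there. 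The Medvedev--Scanlon and Ritt step is what passes from $\sigma^k$ to $\sigma$.

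Alternatively, you could take as input a Levin-type relation $f^a\circ\sigma=\tau\circ f^b$ for $\sigma$ itself, the route you mention and set aside. Then your argument closes quickly. Since $f\circ\tau=\tau'\circ f$ for $\tau$ in the finite symmetry group of $J_f$, the map $(f,f)$ sends the finite family of curves $\{f^a(y)=\tau(f^b(x))\}$ into itself. In that case the needed citation is that classification theorem, not a potential-theoretic estimate.
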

\begin{proof}
From the discussion in \cite[Remark 3.4]{Dujardin-Favre-Gauthier} we deduce that we may assume without loss of generality that $\sigma(U) \subsetneq U$ and $\sigma^k$ defines a branch of an $(f,f)$-preperiodic curve $C$ satisfying
\[
(f^{k_1}, f^{k_1})(C) = (f^{k_2}, f^{k_2})(C),
\]
for some $k_1 < k_2$. Shrink $U$ if necessary and let $f^{-1}$ be a branch of the inverse of $f$ satisfying $f^{-1}(U) \subset U$. Let  $\sigma_1 := f^{k_1} \circ \sigma \circ f^{-k_1}$ which is still a local symmetry such that $(x, \sigma_1^{k}(x))$ is a branch of an $(f^{k_2 - k_1}, f^{k_2 - k_1})$-invariant curve. 

At the expense of replacing $\sigma_1$ with $\sigma_1^{-1}$ if necessary and using the classification of Medvedev and Scanlon \cite[Theorem 6.24]{Scanlon}, we must have
\[
\sigma_1^{k} = g
\]
for some polynomial $g$ commuting with $f^{k_2 - k_1}$. Note that $g$ cannot be the identity since that implies both $\sigma_1$ and its inverse have finite order which contradicts $\sigma(U) \subsetneq U$. By the result of Ritt on commuting polynomials \cite{Ritt} we deduce that $f$ and $g$ share an iterate i.e.
\[
f^a = g^b,
\]
for some $a, b \in \Z_{>0}$. By $\sigma_1^{k} = g$, $\sigma_1$ must commute with $g^b = f^a$. Hence, 
\[
(f^a, f^a)(x, \sigma_1(x)) = (f^a(x), f^a(\sigma_1(x))) = (f^a(x), \sigma_1(f^a(x))),
\]
showing that $(x, \sigma_1(x))$ is invariant under $(f^a,f^a)$. Finally, we use the hypothesis that $(x, \sigma_1(x))$ is a branch of an algebraic curve $C_1$ to conclude that $C_1$ is invariant under $(f^a,f^a)$. 

By the definition of $\sigma_1$, it is then easy to see that the algebraic curve defined by $(x, \sigma_1(x)) = (x, f^{k_1} \circ \sigma \circ f^{-k_1}(x))$ is precisely equal to $(f^{k_1},f^{k_1})C'$  where $C'$ is the curve defined by $(x, \sigma(x))$. So, $(f^{k_1},f^{k_1})C'$ is fixed by $(f^a,f^a)$ and we are done.   
\end{proof}

\section{Proof of Theorem \ref{thm:DAS-then-DALW} and some other useful reductions}
\label{sec:reductions}

We begin this section by proving Theorem \ref{thm:DAS-then-DALW}, which shows that the dynamical Ax--Schanuel conjecture implies the dynamical Ax--Lindemann--Weierstrass conjecture.

\begin{proof}[Proof of Theorem \ref{thm:DAS-then-DALW}]
Let $V_1$ be a proper irreducible algebraic subvariety of $\C^n$ of dimension $r$ containing an $f$-bialgebraic branch $V_1' \subset \D_R^n$ , and let $V_2$ be the Zariski closure of $\bpsi_{f,n}(V'_1)$. First suppose that $V_2$ is a hypersurface. If 
\[
(h_1(x), \dots,h_n(x))  \subset \D_R^n,
\]
is a local chart of $V_1$ contained in $V'_1$, then 
\[
(h_1(x),\dots,h_n(x), \Psi_f(h_1(x)),\dots,\Psi_f(h_n(x))),
\]
must be contained in $V_1 \times V_2$ which has dimension $r + n - 1 \le r + n$. Hence, by Conjecture \ref{conj:dyn-ax-schanuel} we must have that $V_2$ is contained in a proper $f$-special subvariety of $(\bP^1)^n$. Since $V_2$ is an irreducible hypersurface we conclude that $V_2$ itself is $f$-special. 

Now suppose that $V_2$ is a subvariety of dimension $k \ge r$ in $(\bP^1)^n$. After permuting the coordinates if necessary, we may assume that $V_2$ is mapped dominantly onto $(\bP^1)^k$ by the projection $\pi_{1,\dots,k}$. Take any $\ell \in \{k + 1,\dots, n\}$. We must have 
\[
\bpsi_{f, k + 1}(\pi_{1,\dots,k,\ell}(V'_1)) \subset \pi_{1,\dots,k,\ell}(V_2).
\]
So, by the proof of the hypersurface case above, $\pi_{1,\dots,k,\ell}(V_2)$ must be $f$-special for every $\ell \in \{k + 1,\dots, n\}$.

Let $S \subset \{k + 1,\dots, n\}$ be the subset of all indices $\ell$ such that $\pi_{1,\dots,k,\ell}(V_2)$ is contained in a fiber of $\pi_{k+1}: (\bP^1)^{k + 1} \lra \bP^1$. Then, for every $\ell \in \{k + 1,\dots,n\} \setminus S$, $\pi_{1,\dots,k,\ell}(V_2)$ is $F_{k + 1}$-preperiodic. We can choose $0 \le N < M$ such that 
\[
F_{k+1}^N(\pi_{1,\dots,k,\ell}(V_2)) = F_{k+1}^M(\pi_{1,\dots,k,\ell}(V_2)),
\]
for all $\ell \in \{k+1,\dots,n\} \setminus S$. It follows that $V_2$ is contained in a fiber of $\pi_S$ and that $\pi_{S^c}(V_2)$ satisfies
\[
F_{n-s}^N(\pi_{S^c}(V_2)) = F_{n-s}^M(\pi_{S^c}(V_2)),
\]
where $s = |S|$. Thus, $V_2$ is an $f$-special subvariety of $(\bP^1)^n$. 
\end{proof}

In the remainder of this section, we collect several useful reductions that will play an important role in the proof of Theorem~\ref{thm:DAS-totally-disconnected}. We first show that all irreducible components of $F_n$-preimages of $F_n$-preperiodic subvarieties are $F_n$-preperiodic as well.

\begin{proposition}
\label{prop:preimage-also-prep}
Suppose that $T$ is an irreducible $F_n$-preperiodic subvariety of $(\bP^1)^n$. Then, for every $m \ge 0$, all irreducible components of $F_n^{-m}(T)$ are also $F_n$-preperiodic.
\end{proposition}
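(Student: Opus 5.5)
Since $T$ is $F_n$-preperiodic, there are integers $0 \le N < M$ with $F_n^N(T) = F_n^M(T)$. Set $p := M - N$ and $T_0 := F_n^N(T)$. Then $T_0$ is irreducible, being the image of an irreducible variety under a morphism; $F_n$ is finite, so the image is closed. Moreover $F_n^p(T_0) = T_0$. Fix $m \ge 0$ and an irreducible component $W$ of $F_n^{-m}(T)$. The plan is to show directly that some forward image of $W$ is periodic, using that $F_n$ is a finite surjective morphism of $(\bP^1)^n$.

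\textbf{Step 1 (dimension and image).} Since $F_n$ is finite and flat (a product of finite maps $\bP^1 \to \bP^1$), every irreducible component of $F_n^{-m}(T)$ has dimension $\dim T$ and maps onto $T$. So $F_n^m(W) = T$, because $F_n^m(W)$ is closed, irreducible, contained in $T$, and of the same dimension. Hence
\[
F_n^{m+N}(W) = T_0, \qquad F_n^{m+N+p}(W) = F_n^p(T_0) = T_0 = F_n^{m+N}(W).
\]
This already says that $W$ is preperiodic with the exponents $m+N < m+M$.

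\textbf{Step 2 (check against the definition).} The paper's notion of $F_n$-preperiodic only asks for $F^{N'}(W) = F^{M'}(W)$ with $N' < M'$, so Step 1 finishes the proof. The only point needing care is Step 1's claim that $F_n^m(W) = T$, i.e.\ that components of preimages dominate $T$. I expect this to be the main (though mild) obstacle. I would handle it via the going-down property: $F_n$ is finite and flat over the normal variety $(\bP^1)^n$, so it is an open map. Alternatively, argue componentwise: write $F_n^{-m}(T) = \bigcup_i W_i$. Each $F_n^m(W_i)$ is closed and irreducible, and their union is $T$ by surjectivity of $F_n^m$. Since all fibers are finite, $\dim W_i = \dim F_n^m(W_i)$. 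Openness of the finite flat map $F_n^m$ forces every component $W_i$ to have dimension at least $\dim T$, because locally $F_n^{-m}(T)$ is cut out by the pullbacks of the local equations of $T$. Combining $\dim W_i \ge \dim T$ with $F_n^m(W_i) \subseteq T$ and irreducibility of $T$ gives $F_n^m(W_i) = T$.
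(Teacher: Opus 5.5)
Your proof is correct and takes essentially the same route as the paper. In both, finiteness and flatness of $F_n$ (the paper cites Hartshorne's Corollary III.9.6) show that every irreducible component $W$ of $F_n^{-m}(T)$ has dimension $\dim T$ and maps onto $T$. You then make the final step explicit via $F_n^{m+N}(W)=F_n^{N}(T)=F_n^{M}(T)=F_n^{m+M}(W)$, and your openness/going-down argument for the dominance step is a valid substitute for the citation.
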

\begin{proof}
Note that $F_n$, and hence every iterate $F_n^m$, is flat. Indeed, $F_n$ is a product of polynomial endomorphisms of $\bP^1$, each of which is finite and flat. It follows that by \cite[Corollary~9.6]{Hartshorne} that all irreducible components of $F_n^{-m}(T)$ have the same dimension, and each maps dominantly onto $T$ under $F_n^m$. Consequently, each irreducible component is preperiodic.
 
\end{proof}
As a result, we see that the irreducible components of $F_n$-preimages of $f$-special subvarieties must also be $f$-special.
\begin{proposition}
\label{prop:preimage-also-special}
Suppose that $T$ is an irreducible and $f$-special subvariety of $(\bP^1)^n$. Then, for every $m \ge 0$, all irreducible components of $F_n^{-m}(T)$ are also $f$-special.
\end{proposition}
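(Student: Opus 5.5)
The plan is to reduce to Proposition~\ref{prop:preimage-also-prep}, using the product structure of $F_n$. Write $T$ as $f$-special with respect to a coordinate set $\mathcal{I}=\{i_1,\dots,i_s\}$. This means two things. First, $T$ lies in a fiber $\pi_{\mathcal{I}^c}^{-1}(b)$ for some point $b=(b_j)_{j\notin\mathcal{I}}$. Second, $T_{\mathcal{I}}:=\pi_{\mathcal{I}}(T)$ is $F_s$-preperiodic. Since $T$ is irreducible and lies in a single fiber, we have $T = T_{\mathcal{I}}\times\{b\}$ after reordering coordinates, and $T_{\mathcal{I}}$ is irreducible.

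The key observation is that $F_n^m$ splits as $F_s^m\times F_{n-s}^m$ on $(\bP^1)^{\mathcal{I}}\times(\bP^1)^{\mathcal{I}^c}$. This gives
\[
F_n^{-m}(T) = F_s^{-m}(T_{\mathcal{I}})\times F_{n-s}^{-m}(b).
\]
Here $F_{n-s}^{-m}(b)$ is a finite set of points $c$, because $f$ is a nonconstant polynomial. Hence the irreducible components of $F_n^{-m}(T)$ are exactly the products $Z\times\{c\}$. In these products, $Z$ ranges over the irreducible components of $F_s^{-m}(T_{\mathcal{I}})$ and $c$ ranges over $F_{n-s}^{-m}(b)$. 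This uses the fact that a product of an irreducible variety with a point is irreducible, and that a finite disjoint union of such products decomposes componentwise.

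For each such component $W = Z\times\{c\}$, I would check the definition directly with the same index set $\mathcal{I}$. The component $W$ lies in the fiber $\pi_{\mathcal{I}^c}^{-1}(c)$, and $\pi_{\mathcal{I}}(W)=Z$. By Proposition~\ref{prop:preimage-also-prep}, applied to the irreducible $F_s$-preperiodic subvariety $T_{\mathcal{I}}$ of $(\bP^1)^s$, the set $Z$ is $F_s$-preperiodic. Therefore $W$ is $f$-special.

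The only mildly delicate step is the degenerate case $s=0$. There $T$ is a point, its preimages are finitely many points, and these are trivially $f$-special with $\mathcal{I}=\emptyset$; the convention is that $F_0$ acts on a point. One should also confirm that Proposition~\ref{prop:preimage-also-prep} is valid in ambient dimension $s$ rather than $n$. This is immediate, since its proof is dimension-independent. I do not expect a real obstacle beyond bookkeeping of the product decomposition.
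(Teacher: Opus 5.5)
Your proof is correct and takes the same approach as the paper, which just says the result is immediate from the definition of $f$-special subvarieties and Proposition~\ref{prop:preimage-also-prep}. Your product decomposition $F_n^{-m}(T)=F_s^{-m}(T_{\mathcal{I}})\times F_{n-s}^{-m}(b)$, with the preimage of $b$ finite, supplies the bookkeeping that this one-line argument leaves implicit.
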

\begin{proof}
This is immediate from the definition of $f$-special subvarieties and Proposition \ref{prop:preimage-also-prep}. 
\end{proof}
Let $p_{d,n}:(\bP^1)^n \lra (\bP^1)^n$ be the map given by 
\[
(z_1,\dots,z_n) \mapsto (z_1^d,\dots,z_n^d).
\]
The next useful reduction allows us to replace the set $Z$ in the statement of Conjecture \ref{conj:dyn-ax-schanuel} given by the parametrization 
\begin{equation}
\label{eqn:Z-param}
\left\{(h_0(x), \dots,h_{n-1}(x), \Psi_f(h_0(x)), \dots,\Psi_f(h_{n-1}(x))) : x \in D \subset \C^m\right\},
\end{equation}
with
\[
\left\{
\begin{aligned}
&\bigl(
h_0(x)^{d^M}, \dots, h_{n-1}(x)^{d^M},
\Psi_f\!\bigl(h_0(x)^{d^M}\bigr), \dots,
\Psi_f\!\bigl(h_{n-1}(x)^{d^M}\bigr)
\bigr): x \in D \subset \mathbb{C}^m
\end{aligned}
\right\}.
\]
for any $M \ge 0$.
\begin{proposition}
\label{prop:forward-image-red}
Let $\widetilde{Z}$ be an analytic branch contained in $\D_R^n$ and let $Z := \Gamma(\bpsi_{f,n}|_{\widetilde{Z}})$. Then, Conjecture \ref{conj:dyn-ax-schanuel} holds for $Z$ if and only if it holds for $(p_{d,n}^M, F_n^M)(Z)$ for some $M \ge 0$. 
\end{proposition}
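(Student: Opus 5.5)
The plan is to compare the two hypotheses and the two conclusions of Conjecture~\ref{conj:dyn-ax-schanuel} for $Z$ and for $Z_M := (p_{d,n}^M, F_n^M)(Z)$. The link is the functional equation \eqref{eqn:bottcher-eqns}, which gives
\[
\Psi_f\bigl(h_i(x)^{d^M}\bigr) = f^M\bigl(\Psi_f(h_i(x))\bigr).
\]
Hence $Z_M$ is again of the shape \eqref{eqn:Z-param}, with $h_i$ replaced by $h_i^{d^M}$. These still take values in $\D_R$, since $|z^{d^M}| \le |z|$. In particular $Z_M = \Gamma(\bpsi_{f,n}|_{p_{d,n}^M(\widetilde Z)})$.

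\textbf{Dimensions.} The map $\Theta_M := (p_{d,n}^M, F_n^M)$ is a finite (flat) endomorphism of $(\bP^1)^{2n}$. The generic fibres of $Z \to Z_M$ are finite, so $\dim Z_M = \dim Z = m$. Since $\Theta_M$ is finite, it carries Zariski closures to Zariski closures and preserves dimension:
\[
\overline{Z_M}^{\zar} = \Theta_M\bigl(\overline{Z}^{\zar}\bigr),
\qquad
\dim \overline{Z_M}^{\zar} = \dim \overline{Z}^{\zar}.
\]
Thus the conclusion $\dim \overline{Z}^{\zar} \ge m+n$ holds for $Z$ if and only if it holds for $Z_M$.

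\textbf{Hypotheses.} Next I compare the conditions that the $\Psi$-parts avoid proper $f$-special subvarieties. Write $Y := \bpsi_{f,n}(\widetilde Z)$, so that the $\Psi$-part of $Z_M$ is $F_n^M(Y)$. There are two directions.
\begin{enumerate}
\item Suppose $Y \subset W$ with $W$ a proper $f$-special subvariety. Then $F_n^M(Y) \subset F_n^M(W)$. The image of an $f$-special set is $f$-special: the fibre coordinates map to a point (a fibre again), and the preperiodic part maps to an $F_s$-preperiodic set. Properness is preserved because $F_n^M$ is finite, so dimension does not go up.
\item Conversely, suppose $F_n^M(Y) \subset W$ with $W$ proper and $f$-special. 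Then $Y$ lies in $F_n^{-M}(W)$. Since $Y$ is the image of an irreducible analytic branch, it is irreducible, so it lies in a single irreducible component of $F_n^{-M}(W)$. That component is $f$-special by Proposition~\ref{prop:preimage-also-special}, and proper by equidimensionality (flatness, as in the proof of Proposition~\ref{prop:preimage-also-prep}).
\end{enumerate}
So the hypothesis holds for $Z$ if and only if it holds for $Z_M$. Combined with the dimension comparison, this gives the equivalence.

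\textbf{Main obstacle.} The delicate points are bookkeeping rather than conceptual:
\begin{itemize}
\item checking that forward images of $f$-special sets remain $f$-special with the same index set $\mathcal{I}$;
\item ensuring the Zariski-closure identity under the finite map.
\end{itemize}
For the closure identity, the image of a Zariski closed set under a proper map is closed and contains $Z_M$, so it contains the closure. Conversely, $\Theta_M^{-1}\bigl(\overline{Z_M}^{\zar}\bigr)$ is closed and contains $Z$, which gives the reverse inclusion.
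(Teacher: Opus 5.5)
Your proposal is correct and follows essentially the same route as the paper: the Zariski closure of $(p_{d,n}^M,F_n^M)(Z)$ is the image of the closure of $Z$ under this finite map, so the dimensions of the closures agree, and the special-subvariety hypotheses are matched by passing to an irreducible component of $F_n^{-M}(T)$, which is $f$-special by Proposition~\ref{prop:preimage-also-special}. Your write-up is slightly more complete than the paper's, since you justify the closure identity and explicitly check the forward direction (that $F_n^M$ of a proper $f$-special subvariety is again proper and $f$-special), which the paper leaves implicit.
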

\begin{proof}
Note that we must have 
\[
\overline{((p_{d,n}^M, F_n^M)(Z))}^\zar = (p_{d,n}^M, F_n^M)\left(\overline{Z}^\zar\right).
\]
It follows that 
\[
\dim_\C\left(\overline{((p_{d,n}^M, F_n^M)(Z))}^\zar \right) = \dim_\C\left((p_{d,n}^M, f^M)\left(\overline{Z}^\zar\right)\right) = \dim_\C\left(\overline{Z}^\zar\right).
\]
Therefore, 
\[
\dim_\C\left(\overline{Z}^\zar\right) \ge \dim_\C(Z) + n,
\]
holds if and only if 
\[
\dim_\C\left(\overline{((p_{d,n}^M, F_n^M)(Z))}^\zar \right) \ge \dim_\C(Z) + n = \dim_\C((p_{d,n}^M, F_n^M)(Z)) + n,
\]
holds. To finish the proof note that $\pi_{n+1,\dots,2n}((p_{d,n}^M, F_n^M)(Z))$ is contained in an $f$-special subvariety $T$ if and only if $\pi_{n+1,\dots,2n}(Z)$ is contained in an irreducible component of $F_n^{-M}(T)$ which is also $f$-special by Proposition \ref{prop:preimage-also-special}.
\end{proof}
Similar to Proposition \ref{prop:forward-image-red}, the next reduction allows us to replace the parametrization \ref{eqn:Z-param} with
\[
\left\{(\zeta_0h_0(x), \dots,\zeta_{n-1}h_{n-1}(x), \Psi_f(\zeta_0h_0(x)), \dots,\Psi_f(\zeta_{n-1}h_{n-1}(x))) : x \in D \subset \C^m\right\},
\]
for some $(\zeta_0, \dots, \zeta_{n-1}) \in \mu_{d^{\infty}}^n$.
\begin{proposition}
\label{prop:rotation-red}
Let $\widetilde{Z}$ be an analytic branch contained in $\D_R^n$ and let $Z := \Gamma(\bpsi_{f,n}|_{\widetilde{Z}})$. For any $(\zeta_1,\dots,\zeta_n) \in \mu_{d^\infty}^n$ let $Z_{\zeta_1,\dots,\zeta_n} := \Gamma(\bpsi_{f,n}|_{(\zeta_1,\dots,\zeta_n) \cdot \widetilde{Z}})$. Then, Conjecture \ref{conj:dyn-ax-schanuel} holds for $Z$ if and only if it holds for $Z_{(\zeta_1,\dots,\zeta_n)}$ and all $(\zeta_1,\dots,\zeta_n) \in \mu_{d^\infty}^n$. 
\end{proposition}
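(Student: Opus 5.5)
The plan mirrors the proof of Proposition~\ref{prop:forward-image-red}, with the coordinatewise rotation
\[
\rho_{\zeta}\colon (x_1,\dots,x_n)\mapsto (\zeta_1x_1,\dots,\zeta_nx_n)
\]
in place of $p_{d,n}^M$. The idea is to show that passing from $Z$ to $Z_{(\zeta_1,\dots,\zeta_n)}$ preserves both sides of the inequality in Conjecture~\ref{conj:dyn-ax-schanuel} and also the hypothesis about $f$-special subvarieties.

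\textbf{Step 1: reduce the rotation to an algebraic operation.} Fix $(\zeta_1,\dots,\zeta_n)\in\mu_{d^\infty}^n$ and choose $M\ge 0$ with $\zeta_i^{d^M}=1$ for all $i$. By \eqref{eqn:bottcher-eqns},
\[
F_n^M\bigl(\bpsi_{f,n}(\rho_\zeta(x))\bigr)=\bpsi_{f,n}\bigl(p_{d,n}^M(\rho_\zeta(x))\bigr)=\bpsi_{f,n}\bigl(p_{d,n}^M(x)\bigr)=F_n^M\bigl(\bpsi_{f,n}(x)\bigr).
\]
Consequently $(p_{d,n}^M,F_n^M)(Z_{\zeta})=(p_{d,n}^M,F_n^M)(Z)$ as analytic sets.

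\textbf{Step 2: transfer the conjecture.} By Proposition~\ref{prop:forward-image-red}, Conjecture~\ref{conj:dyn-ax-schanuel} holds for $Z$ if and only if it holds for $(p_{d,n}^M,F_n^M)(Z)$. The same proposition, applied to the branch $\rho_\zeta(\widetilde Z)\subset\D_R^n$, shows that the conjecture holds for $Z_\zeta$ if and only if it holds for $(p_{d,n}^M,F_n^M)(Z_\zeta)$. Since these two images coincide by Step 1, the conjecture holds for $Z$ if and only if it holds for $Z_\zeta$.

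Taking $\zeta=(1,\dots,1)$ gives the trivial direction. For the other direction, the argument above is valid for every $\zeta$, so it yields the statement for all $\zeta$ at once.

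\textbf{Step 3: check the hypotheses.} I expect the main point needing care to be that both sides of the equivalence in Proposition~\ref{prop:forward-image-red} are really the same statement, including the non-specialness hypothesis. This was handled there through Proposition~\ref{prop:preimage-also-special}, which deals with irreducible components of $F_n^{-M}(T)$. The dimension count is also unaffected, because $\rho_\zeta$ is an automorphism, so $\dim Z_\zeta=\dim Z$.

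If one prefers to avoid going through images, there is a direct alternative. The map $(\rho_\zeta,\mathrm{id})$ is an algebraic automorphism sending $Z$ onto $\Gamma\bigl(\bpsi_{f,n}\circ\rho_\zeta^{-1}|_{\rho_\zeta\widetilde Z}\bigr)$, but this graph is not literally $Z_\zeta$, which is why Step 1 is needed. Step 1 converts the $\Psi_f$-side of the relation into an algebraic correspondence, namely both points lying in the same $F_n^M$-fiber. Zariski closures then have equal dimensions, since $(p_{d,n}^M,F_n^M)$ is finite and so preserves the dimension of Zariski closures.
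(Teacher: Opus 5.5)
Your proposal is correct and follows the paper's proof. The paper likewise notes that $(p_{d,n}^M,F_n^M)(Z)=(p_{d,n}^M,F_n^M)(Z_{\zeta_1,\dots,\zeta_n})$ for suitable $M$ and concludes via Proposition~\ref{prop:forward-image-red}, treating the reverse direction as trivial. Your Step~1 simply makes explicit the choice $\zeta_i^{d^M}=1$ and the B\"ottcher functional equation behind that equality, which the paper leaves implicit.
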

\begin{proof}
The reverse direction is clear. To prove the forward direction suppose that Conjecture \ref{conj:dyn-ax-schanuel} holds for $Z$ and let $(\zeta_1,\dots,\zeta_n) \in \mu_{d^\infty}^n$ be given. It follows from the definition of $Z_{\zeta_1,\dots,\zeta_n}$ that 
\[
(p_{d,n}^M,F_n^M)(Z) = (p_{d,n}^M,F_n^M)(Z_{\zeta_1,\dots,\zeta_n}),
\]
for some $M \ge 1$. We conclude the proof by Proposition \ref{prop:forward-image-red}.
\end{proof}

\section{\texorpdfstring{$f$}{f}-special sets}
\label{sec:f-special-sets}
The main result of this section is Proposition~\ref{prop:f-special-gives-bialg}, which shows that $f$-special sets naturally give rise to $f$-bialgebraic sets.

Recall that $F_n$ was the map on $\C^n$ given by the coordinatewise action of $f$. The following lemma is a consequence of the classification of Medvedev and Scanlon \cite[Theorem 6.24]{Scanlon} and describes the structure of $F_n$-invariant subvarieties of $\C^n$ in a clean manner.  
\begin{lemma}
\label{lem:form-of-invariant}
Suppose that $W \subset \C^n$ is an irreducible subvariety of dimension $r \ge 1$ that is invariant under $F_n$. Then, after permuting the coordinates we may assume that $W$ is given by the parametrization 
\[
(x_1,\dots, x_r, g_1(x_{i_1}), \dots, g_{n-r}(x_{i_{n-r}})),
\]
where $i_1,\dots,i_{n-r}$ belong to $\{1,\dots,r\}$ and 
\[
g_j = L_j \circ h^{m_j},
\]
for some $m_j \ge 0$, where $h^\ell = f$ for some $\ell \ge 1$ and $L_j$ is a linear function of finite order commuting with a compositional power of $h$.  
\end{lemma}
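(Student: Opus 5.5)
We will deduce Lemma~\ref{lem:form-of-invariant} from the Medvedev--Scanlon classification \cite[Theorem 6.24]{Scanlon} of invariant subvarieties of split polynomial dynamical systems, together with Ritt's theory of commuting polynomials \cite{Ritt}. Since $f$ is non-exceptional (it is neither conjugate to a power map nor to a $\pm$Chebyshev polynomial), \cite[Theorem 6.24]{Scanlon} tells us that an irreducible $F_n$-invariant subvariety $W$ is an irreducible component of an intersection of subvarieties, each defined by a single equation in at most two coordinates. Each such equation is of the form $x_i = c$, with $c$ a fixed (or, after passing to iterates, periodic) point, or it is an $(f,f)$-invariant curve in the $(x_i,x_j)$-plane. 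Invariant curves of the latter kind are graphs $x_j = g(x_i)$ or $x_i = g(x_j)$ with $g$ commuting with $f$. We first reduce to the case where no coordinate of $W$ is constant: if $\pi_i(W)$ is a point, then $W$ has fewer free coordinates, and the lemma treats those coordinates by letting the corresponding $g$ be constant. We plan to handle this either by including constants among the allowed forms, or by observing that constant coordinates do not occur for $r \ge 1$ when the coordinates are permuted appropriately. We will check that the intended statement permits this.

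The next step is to organize the coordinates. On $\{1,\dots,n\}$ we consider the relation $i \approx j$ when $\pi_{i,j}(W)$ is a curve, that is, when it is not all of $\C^2$. By \cite{Scanlon}, each such projection is a graph of a polynomial commuting with $f$ in one direction. Because $\dim W = r$, the quotient has exactly $r$ classes. From each class we choose a representative $i$ that is ``minimal'', meaning every other $x_j$ in the class is a polynomial in $x_i$. The existence of such a representative follows from the Medvedev--Scanlon structure: within a connected component, the skew-invariant curves compose, and a minimal element can be found, since the degrees of the maps are bounded below. After permuting coordinates, the representatives are $x_1,\dots,x_r$, and the parametrization becomes $(x_1,\dots,x_r,g_1(x_{i_1}),\dots,g_{n-r}(x_{i_{n-r}}))$ with each $g_j \circ f = f\circ g_j$.

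It remains to put each $g_j$ into the stated form. For this we apply Ritt's theorem on commuting polynomials, in the refined form used by Medvedev--Scanlon. Since $f$ is non-exceptional, every polynomial commuting with $f$ has the shape $L\circ h^{m}$. Here $h$ is a polynomial with $h^\ell=f$, and $L$ is linear, lies in the finite group of linear maps commuting with an iterate of $h$, and hence has finite order. A further point is that a single $h$ must work for all $j$. We take $h$ to be a generator of the ``root'' of $f$: the set of polynomials of degree at least $2$ commuting with $f$, taken modulo the finite linear symmetry group, is cyclic, generated by a minimal-degree element. This again follows from Ritt's theorem and \cite[Section 6]{Scanlon}.

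The main obstacle is the second step. We must justify that every coordinate is a polynomial function of a single chosen free coordinate, and not merely related to it by a correspondence; the danger is that the minimal element might fail to exist. We will argue this using \cite[Proposition 2.21 and Theorem 6.24]{Scanlon}. Invariant curves for a non-exceptional $f$ that are not graphs would have to be of the form $\{h_1(x)=h_2(y)\}$ for non-linear polynomials $h_1,h_2$. The classification rules these out for invariant (as opposed to preperiodic) curves, up to rechoosing which coordinate is free.
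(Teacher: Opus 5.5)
Your proposal is correct and is essentially the paper's argument. Both use Medvedev--Scanlon \cite[Theorem 6.24]{Scanlon} to show that each dependent coordinate is tied to a free one by the graph of a polynomial commuting with $f$, in one direction or the other; your minimal representative in each class is what the paper obtains by projecting dominantly onto $r$ coordinates and swapping and composing whenever the relation reads $x_{i_j}=g_j(x_{r+j})$, and both read the shape $g_j=L_j\circ h^{m_j}$ off \cite{Scanlon}. On constant coordinates only your first option works: no permutation removes a constant coordinate, and a constant is not of the form $L_j\circ h^{m_j}$, so the lemma should be read with all coordinate projections of $W$ dominant, which is how the paper applies it in Proposition~\ref{prop:f-special-gives-bialg}.
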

\begin{proof}
After permuting the coordinates if necassary, we may assume that the projection of $W$ onto the first $r$ coordinates is dominant. For any $1 \le j \le n-r$ we have the next commutative diagram
\[
\begin{tikzcd}
\C^n \arrow[r, "F_n"] \arrow[d, "\pi_{1,\dots,r, r + j}"'] &  \C^n \arrow[d, "\pi_{1,\dots,r, r + j}"] \\ 
\C^{r + 1} \arrow[r, "F_{r+1}"] &  \C^{r + 1}.
\end{tikzcd}
\]
The diagram shows that $\pi_{1,\dots,r,r + j}(W)$ must  be an invariant hypersurface in $\C^{r + 1}$ given that $W$ is invariant. Hence, by \cite[Theorem 6.24]{Scanlon}, we get $x_{r + j} = g_{j}(x_{i_j})$ or $x_{i_j} = g_j(x_{r + j})$ for some $i_j \in \{1,\dots, r\}$, where $g_j$ is a polynomial commuting with $f$. In the case $x_{i_j} = g_j(x_{r + j})$, we swap the coordinates $i_j$ and $r + j$. After this permutation $W$ still clearly projects to the first $r$ coordinates dominantly. Moreover, this permutation replaces the polynomials $g_s$ with $s \le j - 1$ and $i_s = i_j$ with $g_s \circ g_j$ that still commute with $f$. Continuing in this manner we obtain the desired parametrization for $W$. The fact that $g_j$ has the desired structure described in the statement of the Lemma is a consequence of \cite[Theorem 6.24]{Scanlon}.
\end{proof}

\begin{lemma}
\label{lem:commutation-of-L-and-h}
Let $g_j$ be as in the statement of Lemma \ref{lem:form-of-invariant} for some $j \in \{1,\dots, n - r\}$. Then, 
\[
\Phi_f \circ g_j(x) = \zeta \Phi_f(x)^{d_1},
\]
where $d_1 = \deg(g_j)$ and $\zeta \in \mu_\infty$. 
\end{lemma}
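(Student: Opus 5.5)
The plan is to show that $g_j$ extends to a map near infinity which, in B\"ottcher coordinates, is forced to be a monomial. Write $g_j = L_j\circ h^{m_j}$ with $h^\ell = f$ and $L_j$ affine of finite order commuting with some $h^{k}$. Since $g_j$ is a polynomial of degree $d_1$, it sends a neighborhood of $\infty$ into a neighborhood of $\infty$. Because $g_j$ commutes with $f$, which holds by Lemma~\ref{lem:form-of-invariant} and \cite[Theorem 6.24]{Scanlon}, we have $g_j(B_\infty(f)) \subset B_\infty(f)$. Moreover, the Green function $G_f$ satisfies $G_f\circ g_j = d_1 G_f$: both $G_f\circ g_j$ and $G_f$ are harmonic on $B_\infty(f)$, and $G_f\circ g_j\circ f = d\,G_f\circ g_j$. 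It then follows, for instance from the uniqueness of the Green function with pole at $\infty$ applied to $\frac{1}{d_1}G_f\circ g_j$, together with $g_j^{-1}(K_f)=K_f$ (which comes from commutation), that $\frac{1}{d_1}G_f\circ g_j = G_f$.

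Next, consider the function $u(x) := \Phi_f(g_j(x))/\Phi_f(x)^{d_1}$ on a small punctured neighborhood of $\infty$ inside $U_\infty(f)$, small enough that $g_j$ maps it into $U_\infty(f)$. Since $\log|\Phi_f| = -G_f$, the Green function identity gives $|u| \equiv 1$. As $u$ is holomorphic and nonvanishing, the open mapping theorem shows $u$ is a unimodular constant $\zeta$. Thus $\Phi_f\circ g_j = \zeta\,\Phi_f^{d_1}$ near $\infty$, and by analytic continuation this holds wherever both sides are defined.

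It remains to show that $\zeta\in\mu_\infty$, and this is the step I expect to need the most care. Using $g_j\circ f = f\circ g_j$ together with $\Phi_f\circ f = \Phi_f^d$, I will compute both sides. On one hand, $\Phi_f(f(g_j(x))) = \zeta^d\Phi_f(x)^{d_1 d}$. On the other hand, $\Phi_f(g_j(f(x))) = \zeta\,\Phi_f(x)^{d d_1}$. Comparing gives $\zeta^{d}=\zeta$, hence $\zeta^{d-1}=1$.

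If $d_1 = 1$, the same argument still applies with $u$ nonvanishing, since $g_j=L_j$ is affine commuting with $f$ and thus $G_f\circ L_j=G_f$. In that case $\zeta$ is again a $(d-1)$-th root of unity.

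A possible subtlety is that $L_j$ commutes only with a power of $h$ rather than with $f$ itself. However, Lemma~\ref{lem:form-of-invariant} gives $g_j$ commuting with $f$ directly, since the curve $x_{r+j}=g_j(x_{i_j})$ is $F$-invariant, so $f\circ g_j = g_j\circ f$. If that ever fails, I would fall back on the relation with $f^{N}$, which yields $\zeta^{d^N-1}=1$ and still gives $\zeta\in\mu_\infty$.
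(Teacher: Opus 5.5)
Your proof is correct, but it follows a different route from the paper's.

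\textbf{The paper's route.} The paper works through the factorization $g_j = L_j\circ h^{m_j}$. Since $h^\ell = f$, $\Psi_f$ is also a B\"ottcher coordinate for $h$, and uniqueness up to $\mu_{d-1}$ gives $\Phi_f\circ h=\theta\,\Phi_f^{d_2}$. Next, $L_j$ preserves $B_\infty(f)$, so $\Phi_f\circ L_j\circ\Psi_f$ is a finite-order automorphism of $\D$ fixing $0$, i.e.\ a rotation by some $\theta'\in\mu_\infty$. Composing gives $\zeta=\theta'\theta^{p}$.

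\textbf{Your route.} You use only that $g_j$ is a degree-$d_1$ polynomial commuting with $f$. From this you get $G_f\circ g_j=d_1G_f$. Via $\log|\Phi_f|=-G_f$, the quotient $\Phi_f\circ g_j/\Phi_f^{d_1}$ is then a unimodular constant, and the commutation relation fixes its order. The justification that $g_j$ commutes with $f$, because $\{x_{r+j}=g_j(x_{i_j})\}=\pi_{i_j,r+j}(W)$ is $(f,f)$-invariant, is right.

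\textbf{What each approach buys.} Your argument gives three things.
\begin{enumerate}
\item It gives the sharper conclusion $\zeta\in\mu_{d-1}$.
\item It works verbatim when $R<1$. The paper's step that $\Phi_f\circ L_j\circ\Psi_f$ is an automorphism of $\D$, as written, presumes $\Psi_f(\D)=B_\infty(f)$; otherwise one must first check that $L_j$ preserves $U_\infty(f)$.
\item Since $U_\infty(f)=\{G_f>\log(1/R)\}$, your identity $G_f\circ g_j=d_1G_f$ also yields $g_j(U_\infty(f))\subset U_\infty(f)$. Hence the relation $\Phi_f\circ g_j=\zeta\Phi_f^{d_1}$ holds on all of $U_\infty(f)$, which is exactly what Proposition~\ref{prop:f-special-gives-bialg} uses when it describes $W\cap U_\infty(f)^n$.
\end{enumerate}
The paper's argument, in exchange, avoids potential theory and expresses the constant explicitly in terms of $L_j$ and $h$.

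\textbf{Two small tightenings.}
\begin{enumerate}
\item The Green-function identity follows more directly from $G_f(g_j(x))=\lim_n d^{-n}\log^+|g_j(f^n(x))|$ together with $\log^+|g_j(y)|=d_1\log^+|y|+O(1)$ on $\C$. This avoids the uniqueness argument and the check that $g_j^{-1}(K_f)=K_f$.
\item Your fallback to an iterate is always available. If $L_j$ commutes with $h^k$, then $L_j\circ h^{m_j}$ commutes with $h^{k\ell}=f^k$.
\end{enumerate}
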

\begin{proof}
Note that we have $h^\ell = f$. Hence, any B\"ottcher coordinate for $h$ must also be a B\"ottcher coordinate for $f$. Using the fact that the B\"ottcher coordinate is unique up to a $(d-1)$-th root of unity we conclude
\[
h\circ \Psi_f(x) = \Psi_f(\theta x^{d_2}) \ \  \text{ and } \ \ \Phi_f \circ h(x) = \theta \Phi_f(x)^{d_2}
\]
where $d_2 = \deg(h)$ and $\theta$ is a $(d - 1)$-th root of unity. Also, because $L$ commutes with some power of $h$, we conclude that $L$ must preserve $B_\infty(h) = B_\infty(f)$. So, $$\Phi_f \circ L \circ \Psi_f$$ must be an automorphism of $\D$ of finite order fixing $0$. Thus, $$\Phi_f \circ L \circ \Psi_f(x) = \theta'x$$ for some $\theta' \in \mu_\infty$. Putting everything together we get 
\[
\Phi_f \circ g_j(x) = \Phi_f \circ L \circ h^{m_j}(x) = \theta' \Phi_f(h^{m_j}(x)) = \theta' \cdot \theta^p \Phi_f(x)^{d_2^{m_j}},
\]
for some $p \ge 1$. Setting $\zeta = \theta' \cdot \theta^p$ and $d_1 = d_2^{m_j}$ finishes the proof.
\end{proof}
\begin{proposition}
\label{prop:f-special-gives-bialg}
Suppose that $W \subset \C^n$ is an irreducible $F_n$-preperiodic subvariety and let $W_1$ be a connected component of $W \cap U_\infty(f)^n$. Then, 
\[
(\Phi_f, \dots, \Phi_f)(W_1)
\]
is an  $f$-bialgebraic set. 
\end{proposition}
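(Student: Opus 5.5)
The plan is to show that $V := \bPhi$-preimage $\Phi_{f,n}(W_1)$, where $\Phi_{f,n}=(\Phi_f,\dots,\Phi_f)$, is an algebraic branch of the same dimension $r=\dim W$. Since $\Phi_f$ is biholomorphic on each factor, $\Phi_{f,n}(W_1)$ is an analytic set of dimension $r$, and $\bpsi_{f,n}$ maps it onto $W_1$, which is a branch of the algebraic set $W$. So the content is that $\Phi_{f,n}(W_1)$ is itself contained in an algebraic subvariety of $\C^n$ of dimension $r$. I would write (rather than the nonexistent macro) $\Phi_{f,n}$ informally for the coordinatewise map.

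First I reduce to the invariant case. Choose $0\le N<M$ with $F_n^N(W)=F_n^M(W)$. Then $W':=F_n^N(W)$ is $F_n^{M-N}$-invariant, and replacing $f$ by $f^{M-N}$ changes neither $\Psi_f$ (a Böttcher coordinate for $f$ is one for its iterate) nor $U_\infty$. By \eqref{eqn:bottcher-eqns}, $\Phi_f(f^N(w))=\Phi_f(w)^{d^N}$, so $p_{d,n}^N\circ \Phi_{f,n} = \Phi_{f,n}\circ F_n^N$ on $U_\infty(f)^n$. Hence if $\Phi_{f,n}(W_1')$ lies in an $r$-dimensional algebraic set $A$ for the relevant component $W_1'$ of $W'\cap U_\infty(f)^n$ containing $F_n^N(W_1)$, then $\Phi_{f,n}(W_1)\subset p_{d,n}^{-N}(A)$, which is algebraic of dimension $r$ because $p_{d,n}$ is finite. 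One subtlety is that $F_n^N(W_1)$ need not stay inside $U_\infty(f)^n$ when $R<1$; but $F_n$ maps $U_\infty(f)$ into itself ($\Psi_f(z)\mapsto\Psi_f(z^d)$), so it does. Irreducibility of $W'$ holds as the image of an irreducible variety.

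In the invariant case with $r\ge1$, apply Lemma \ref{lem:form-of-invariant}: after permuting coordinates, $W$ is parametrized by $(x_1,\dots,x_r,g_1(x_{i_1}),\dots,g_{n-r}(x_{i_{n-r}}))$. Applying $\Phi_f$ coordinatewise and using Lemma \ref{lem:commutation-of-L-and-h}, $\Phi_f(g_j(x_{i_j}))=\zeta_j\Phi_f(x_{i_j})^{d_j}$. Thus, writing $u_k=\Phi_f(x_k)$, every point of $\Phi_{f,n}(W_1)$ satisfies $u_{r+j}=\zeta_j u_{i_j}^{d_j}$, which cuts out an irreducible algebraic set of dimension $r$ (a translate of a subtorus). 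The case $r=0$ is trivial since a point is algebraic. For the general $f$-special case, the fixed coordinates outside $\mathcal{I}$ map to constants, and we apply the above to $\pi_{\mathcal I}(W)$.

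The main obstacle I expect is the bookkeeping in Lemma \ref{lem:commutation-of-L-and-h}: this needs $g_j$ to map $U_\infty(f)$ into itself (not merely $B_\infty$) when $R<1$, so that $\Phi_f\circ g_j$ makes sense on $W_1$. I would handle this by noting that $L$ is conjugated by $\Phi_f$ to a rotation, and $h$ to $\theta z^{d_2}$, both of which preserve $\D_R$. If that fails, I would instead compose everything with $p_{d,n}^K$ for large $K$ and use analytic continuation of the identity $\Phi_f\circ g_j=\zeta\Phi_f^{d_1}$ from near $\infty$, where it holds by uniqueness of the Böttcher coordinate.
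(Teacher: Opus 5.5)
Your proposal is correct and follows essentially the same route as the paper: handle the periodic case by passing to an iterate of $f$ and using Lemma \ref{lem:form-of-invariant} and Lemma \ref{lem:commutation-of-L-and-h} to get the equations $u_{r+j}=\zeta_j u_{i_j}^{d_j}$, then pull back along $p_{d,n}^N$ via $p_{d,n}^N\circ\Phi_{f,n}=\Phi_{f,n}\circ F_n^N$. The only differences are minor: you justify the dimension count by finiteness of $p_{d,n}$ where the paper uses flatness, and your check that $L_j$ and $h$ preserve $U_\infty(f)$ when $R<1$ (via $L\circ\Psi_f=\Psi_f(\theta'\,\cdot)$ and $h\circ\Psi_f(z)=\Psi_f(\theta z^{d_2})$ on all of $\D_R$ by the identity theorem) supplies a detail that the paper's proof of Lemma \ref{lem:commutation-of-L-and-h} glosses over.
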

\begin{proof}
Suppose that $W \subset \C^n$ is $f$-special. After projecting to the coordinates of $W$ that are non-constant we may assume without loss of generality that $W$ is $F_n$-preperiodic and the projections of $W$ to all of the coordinates are dominant. 

First suppose that $W$ is $F_n$-periodic. Since $\Psi_f$ and $\Phi_f$ remain unchanged after replacing $f$ with an iterate, we may replace $f$ with an iterate and assume that $W$ is fixed under $F_n$. Also, using Lemma \ref{lem:form-of-invariant}, we can permute the coordinates if necessary and assume that $W$ is given by 
\[
(x_1,\dots, x_r, g_1(x_{i_1}), \dots, g_{n-r}(x_{i_{n-r}})),
\]
where $g_j$ are as in the statement of Lemma \ref{lem:form-of-invariant}. Given the form of $g_j$'s it is also straightforward to see that 
\[
W \cap U_\infty(f)^n = \{(x_1,\dots, x_r, g_1(x_{i_1}), \dots, g_{n-r}(x_{i_{n-r}}))): (x_1,\dots,x_r) \in U_\infty(f)^r\},
\]
which is a connected set. Using Lemma \ref{lem:commutation-of-L-and-h} we get $(\Phi_f,\dots,\Phi_f)(W \cap U_\infty(f)^n)$ is equal to the set
\[
W' := \{(\Phi_f(x_1),\dots, \Phi_f(x_r), \zeta_1\Phi_f(x_{i_1})^{d_1}, \dots, \zeta_{n-r}\Phi_f(x_{i_{n-r}})^{d_{n-r}}): (x_1,\dots,x_r) \in U_\infty(f)^r \},
\]
where $d_j = \deg(g_j)$ for all $1 \le j \le n - r$ and for some $\zeta_1,\dots,\zeta_s \in \mu_\infty$. So, $W' = V \cap \D_R^n$ where $V$ is  the $r$ dimensional and irreducible subvariety of $\C^n$ defined by the equations
\[
x_{r + 1} = \zeta_1x_{i_1}^{d_1}, \dots, x_{n} = \zeta_{n-r}x_{i_{n-r}}^{d_{n-r}}. 
\]
Hence, $(\Phi_f,\dots,\Phi_f)(W \cap U_\infty(f)^n)$ is a branch of an algebraic set of dimension $r$. By definition, its image under $\bpsi_{f,n}$ is contained in $W$ which is also $r$ dimensional. Hence, $$(\Phi_f,\dots,\Phi_f)(W \cap U_\infty(f)^n)$$ is bialgebraic. 

Now, suppose that $W$ is $F_n$-preperiodic with $W^m := F_n^m(W)$ periodic for some $m \ge 1$. Suppose that $W_1$ is a branch of $W$ contained in $W \cap U_\infty(f)^n$. Then, by properness of $F_n$, $F_n^{m}(W_1)$ is a branch of $W^m$ contained in $W^m \cap U_\infty(f)^n$. By the argument above, 
\[
(\Phi_f, \dots, \Phi_f)(F_n^{m}(W_1))
\]
must be a branch of an irreducible variety $V$ of dimension $r$. Let $p_{d,n}$ be the map defined in Section \ref{sec:reductions}. It follows that 
\[
(\Phi_f, \dots, \Phi_f)(W_1)
\]
must be a branch of an irreducible component $V_1$ of $p_{d,n}^{-m}(V)$ which must also be $r$ dimensional by \cite[Corollary 9.6]{Hartshorne} and using the fact that $p_{d,n}$ is flat by the miracle flatness theorem \cite[\href{https://stacks.math.columbia.edu/tag/00R4}{Lemma 00R4}]{stacks-project}. Therefore, $(\Phi_f, \dots, \Phi_f)(W_1)$ is bialgebraic. 
\end{proof}

\section{Monodromy of B\"ottcher coordinates of polynomials with disconnected Julia sets}
\label{sec:monodromy}
In this section, we recall the nontrivial monodromy of the function $\Psi_f$, as discussed in \cite[Section~6]{schmidt}, in the case where $J_f$ is disconnected. Throughout, we assume that $J_f$ is disconnected, so that $R<1$. We recall the definitions given in \cite[Section 6]{schmidt}. 
Let $C_{f} := \{z \in B_\infty: f'(z) = 0\}$. For any $c \in C_{f}$, let $n_c \ge 1$ be the smallest integer such that $f^{n_c}(c) \in \Psi(\D_R)$. Let 
\[
C_n := f^{-n}(C_{f})
\]
For any $n \ge 0$, let 
\[
D'_n := \D_{R^{1/d^n}}^\ast.
\]
For any $c \in C_{f}$ let
\[
E_{n,c} = \{z \in \D: \Psi_f(z^{d^n}) = f^{n_c}(c)\},
\]
and let
\[
E_n := \bigcup_{c \in C_{f}} E_{n,c}
\]
(see Figure \ref{fig:E_n,c}). We define 
\[
U_n := \{\zeta z : z \in \cup_{m \le n}E_m, \zeta \in \mu_{d^n}\}, 
\]
and 
\[
\mathcal{U} := \bigcup_{n \ge 1}U_n. 
\]
Lastly, we let
\[
D_n := D'_n \setminus U_n. 
\]
As discussed in \cite[Section 6]{schmidt}, if we are given a path $\gamma : I \lra D_n$ with $\gamma(0) \in \D_R^\ast$ we can use the algebraic relation 
\begin{equation}
\label{eqn:nth-alg-eqn}
f^n(\Psi_f(z)) = \Psi_f(z^{d^n}),
\end{equation}
to analytically continue $\Psi_f$ along $\gamma$. Figure \ref{fig:disc-julia} provides a pictorial illustration, in the case of a quadratic polynomial, of the different branches of $\Psi_f$ that can arise through such analytic continuations.
\begin{proposition}
\label{prop:loop-monodromy}
Let $\gamma: I \lra D_n$ be a loop in $D_n$ with $\gamma(0) = \gamma(1) \in \D_R$ (see $\gamma_2$ shown in Figure \ref{fig:combined-paths} as an example). Continuing $\Psi(z)$ along this loop using the relation given by equation \eqref{eqn:nth-alg-eqn}, sends the branch $\Psi(z)$ to $\Psi(\zeta z)$ for some $\zeta \in \mu_{d^n}$. 
\end{proposition}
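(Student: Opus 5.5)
The continuation of $\Psi$ along $\gamma$ is obtained from equation \eqref{eqn:nth-alg-eqn}: along $\gamma$ the function $w(s) := \Psi\bigl(\gamma(s)^{d^n}\bigr)$ is defined and single-valued. Indeed, $\gamma(s) \in D'_n$ gives $|\gamma(s)^{d^n}| < R$, so $z^{d^n}$ lies in the domain $\D_R$ of $\Psi$. The continued $\Psi(\gamma(s))$ is then a continuous choice of a root of $f^n(x) = w(s)$. This choice is well defined and locally holomorphic because $\gamma$ avoids $U_n$, which contains every $z$ for which $\Psi(z^{d^n})$ is a critical value $f^{n_c}(c)$ of $f^{m}$ for some $m\le n$ (the definition of $E_m$ and its rotations by $\mu_{d^n}$ is designed exactly so that $f^n - w$ has simple roots away from $U_n$). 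So I would first verify this: for $z \in D'_n\setminus U_n$, the point $\Psi(z^{d^n})$ is not a critical value of $f^n$, hence the implicit function theorem continues the root holomorphically. The proof therefore reduces to understanding which root we end on after traversing the loop.

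At $s=1$ we return to $\gamma(0)=z_0\in\D_R$, and $w(1)=w(0)=\Psi(z_0^{d^n})$. The continued value $x_1$ is some root of $f^n(x)=\Psi(z_0^{d^n})$. More importantly, the whole germ obtained at $z_0$, call it $\widetilde\Psi$, is a holomorphic function near $z_0$ satisfying $f^n(\widetilde\Psi(z))=\Psi(z^{d^n})$. The key step is to identify such germs. Since $f^n$ is a local homeomorphism away from critical points, the germ $\widetilde\Psi$ is determined by its value at $z_0$ among the $d^n$ roots. I would check that the $d^n$ germs $z\mapsto \Psi(\zeta z)$, $\zeta\in\mu_{d^n}$, all satisfy the same relation, since $(\zeta z)^{d^n}=z^{d^n}$ and $\Psi(\zeta z)$ is defined on $\D_R$. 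They also take pairwise distinct values at $z_0$, because $\Psi$ is injective on $\D_R$ and the points $\zeta z_0$ are distinct for $z_0\neq 0$, which holds since $\gamma \subset D_n\subset \D^\ast$. Hence they exhaust the $d^n$ roots of $f^n(x)=\Psi(z_0^{d^n})$, counted with multiplicity. This count is legitimate because $\Psi(z_0^{d^n})\in U_\infty(f)$ is not a critical value of $f^n$ there: $\Psi$ conjugates $f^n$ to $z^{d^n}$, which has no critical points on $\D_R^\ast$.

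Combining these, $x_1=\Psi(\zeta z_0)$ for a unique $\zeta\in\mu_{d^n}$. By uniqueness of continuation of a simple root, $\widetilde\Psi(z)=\Psi(\zeta z)$ on a neighborhood of $z_0$. Equivalently, both germs solve the same equation with the same initial value, so the identity theorem applies.

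The main obstacle is the first step: checking that $D_n$ really avoids all points where the continuation could branch. This requires relating the critical values of $f^n$ in the portion of $B_\infty$ reachable by the continuation to the sets $E_m$, using the minimality of $n_c$. If that bookkeeping is delicate, I would fall back on \cite[Section 6]{schmidt}, where this continuation is shown to be well defined on $D_n$, and prove only the identification of the endpoint germ as above.
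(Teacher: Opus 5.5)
Your proposal is correct and uses the same argument as the paper. The continued germ gives a local branch $(\widetilde\Psi(z),\Psi(z^{d^n}))$ of the curve $f^n(x)=y$ at $z_0$; there are at most $d^n$ such branches, and the germs $\Psi(\zeta z)$ with $\zeta\in\mu_{d^n}$ already supply $d^n$ of them. Beyond the paper, you show these germs are pairwise distinct (injectivity of $\Psi$ on $\D_R$ and $z_0\neq 0$), which already forces the roots of $f^n(x)=\Psi(z_0^{d^n})$ to be simple. You also check that the continuation is well defined on $D_n$; this holds, since $\Psi(z^{d^n})=f^k(c)$ with $c\in C_f$ and $n_c\le k\le n$ forces $z\in\mu_{d^n}\cdot E_{n-k+n_c,c}\subset U_n$.
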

\begin{proof}
Let $z_0 = \gamma(0)$. Consider the local branch $(\Psi(z), \Psi(z^{d^n}))$ of the curve $f^n(x) = y$ near the point $(\Psi(z_0), \Psi(z_0^{d^n}))$. We can use equation \eqref{eqn:nth-alg-eqn} to continue $\Psi(z)$ along $\gamma$ to get an analytic function $\widetilde{\Psi}(z)$ defined near $z_0$ and a local branch $(\widetilde{\Psi}(z), \Psi(z^{d^n}))$ of the curve $f^n(x) = y$. This curve has degree $d^n$ which implies that there are at most $d^n$ such branches. On the other hand, for any $\zeta \in \mu_{d^n}$, $(\Psi(\zeta z), \Psi(z^{d^n}))$ also defines such a branch. The proposition then follows. 
\end{proof}

\begin{figure}
    \centering
    \includegraphics[width=1\linewidth]{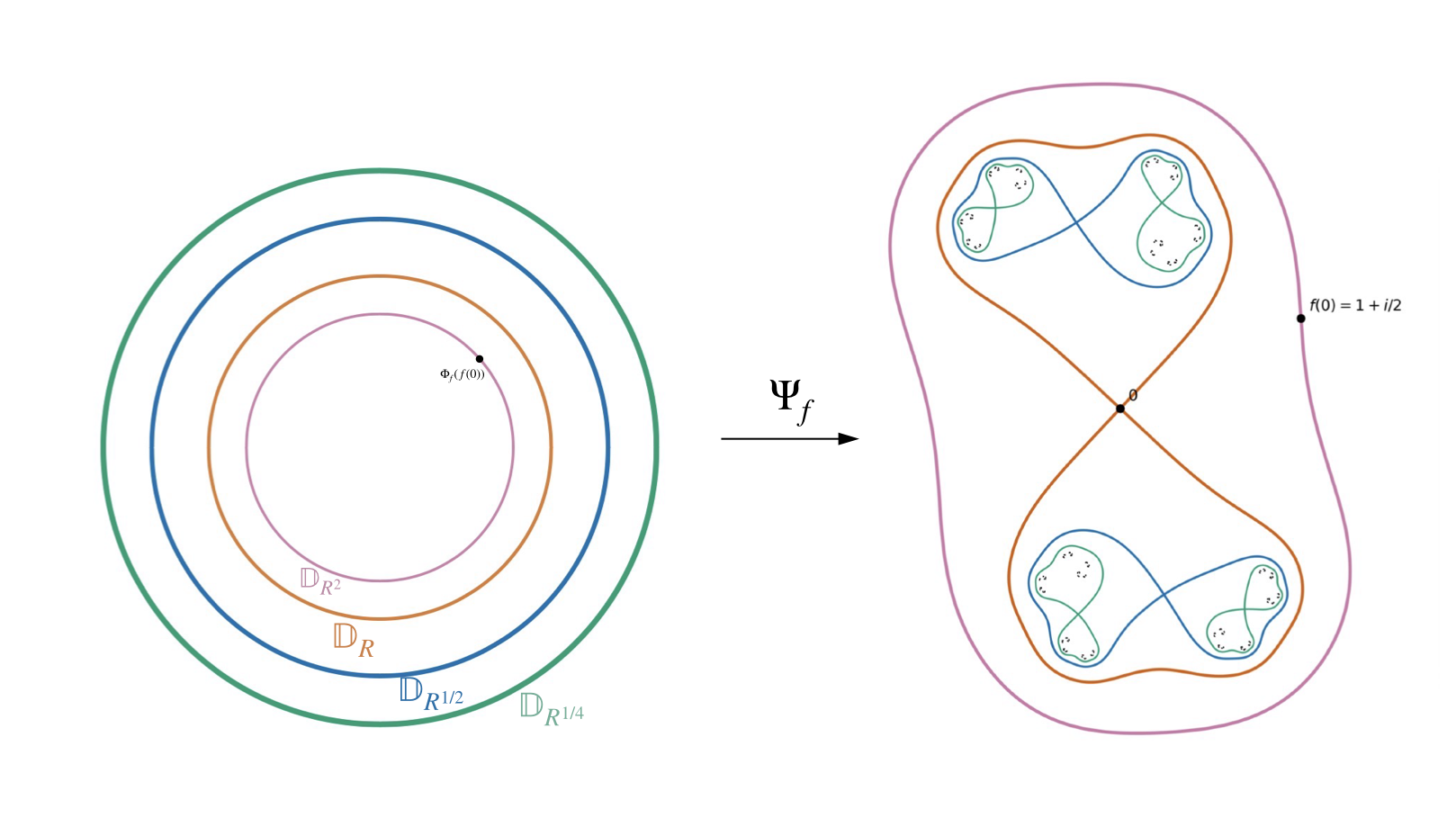}
    \caption{An illustration of the disconnected Julia set of $f = z^2 + 1 + \frac{i}{2}$ and the images of circles with radii $R^{1/4}, R^{1/2}, R, R^2$ under $\Psi_f$.}
    \label{fig:disc-julia}
\end{figure}
\begin{figure}
    \centering
 \includegraphics[width=0.7\linewidth]{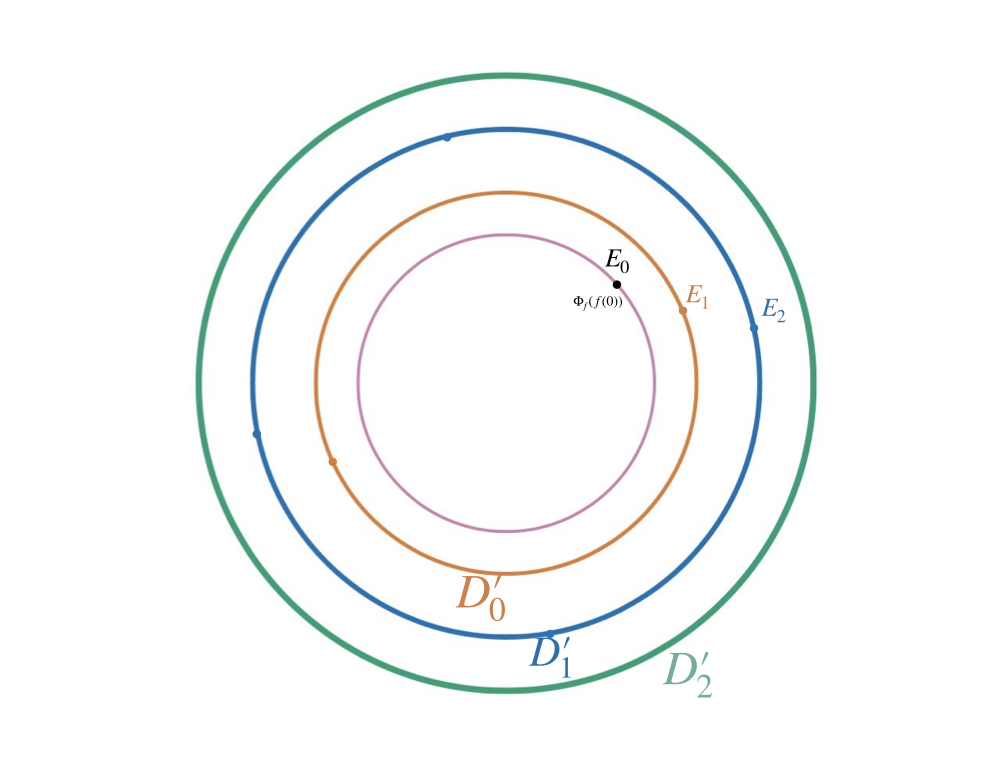}
    \caption{An illustration of the sets $E_n$ and $D'_n$ when $f(z) = z^2 + c$.}
    \label{fig:E_n,c}

\end{figure}
\newpage

Suppose that a path $\gamma:[0,1) \lra D_n$  with $\gamma(0) \in \D_R$ and $\gamma(1^-) = z_0 \in U_n$ for some $n \ge 1$ is given. We can use this path and the equation \eqref{eqn:nth-alg-eqn} to analytically continue $\Psi$. Even though the analytic continuation is only defined on $\gamma([0, 1))$, we can still make sense of $\Psi(z_0)$ by setting 
\begin{equation}
\label{eqn:extend-to-endpt}
\Psi(z_0) = \lim_{t \to 1}\Psi(\gamma(t)).
\end{equation}
This limit must exist since any limit point $w$ of the path $\Psi(\gamma(t))$ must satisfy $f^n(w) = \Psi(z_0^{d^n})$ by continuity, and there are only finitely many such values of $w$. Since the set of all limit points on the path $\Psi(\gamma(t))$ is connected, the desired conclusion follows.   
\begin{definition}
\label{def:precritical-cont-path}
Let $c \in C_{f}$ and $z_0 \in E_{n,c}$ be given. We call a path $\gamma:[0,1) \lra D_n$ a \textit{precritical continuation path for the pair $(c,z_0)$} whenever $\gamma(0) \in \D_R$, $\gamma(1^-) = z_0$, and continuing the relation given by equation \eqref{eqn:nth-alg-eqn} along $\gamma$ results in $\Psi(z_0) \in f^{-(n-n_c)}(c)$ (see equation \eqref{eqn:extend-to-endpt} and the preceding discussion). We call a path a precritical continuation path if it is a precritical continuation path for some pair $(c, z_0)$. 
\end{definition}

The goal of the next lemma is to make the statement and the proof of \cite[Lemma 6.1]{schmidt} a little more detailed.
\begin{lemma}[Lemma 6.1 of \cite{schmidt}]
\label{lem:loop-monodromy-around-E_n}
Let $z_0 \in E_{n, c}$ for some $c \in C_{f}$ and some $n \ge n_c$. Let $\gamma: [0,1) \lra D_n$ be a precritical continuation path for the pair $(c,z_0)$. Let $\gamma_1$ be a small simple loop contained in a neighborhood $U$ of $z_0$ such that $p_{d^n}(z)$ is a local biholomorphism onto its image when restricted to $U$. Let $s \in [0,1]$ be the smallest $s$ such that $\gamma(s) \in \gamma_1$. Then, continuing $\Psi$ using equation \eqref{eqn:nth-alg-eqn} along the loop (illustrated in Figure \ref{fig:combined-paths})
\[
\gamma_2 := \gamma[0,s) \cup \gamma_1 \cup \gamma[0,s)^{-1},
\]
sends $\Psi(z)$ to $\Psi(\zeta z)$ for some $\zeta \in \mu_{d^n}$ of order at least $2^{n-n_c + 1}$.
\end{lemma}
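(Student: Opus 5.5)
First, the continuation along $\gamma_2$ must send $\Psi(z)$ to $\Psi(\zeta z)$ for some $\zeta\in\mu_{d^n}$. This follows from Proposition~\ref{prop:loop-monodromy}, since $\gamma_2$ is a loop in $D_n$ (after shrinking $\gamma_1$ so that it avoids $U_n$) based at $\gamma(0)\in\D_R$. The real content is the lower bound on the order of $\zeta$. My plan is to compute the monodromy locally near $z_0$ and then transport it back along $\gamma[0,s)$. Put $w_0 := \Psi(z_0^{d^n}) = f^{n_c}(c)$. Because $\gamma$ is a precritical continuation path, the continued branch $\widetilde\Psi$ near $z_0$ satisfies $\widetilde\Psi(z)\to x_0 := \Psi(z_0)\in f^{-(n-n_c)}(c)$. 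Near $z_0$ we have $f^n(\widetilde\Psi(z)) = \Psi(z^{d^n})$, and $z\mapsto \Psi(z^{d^n})$ is a local biholomorphism from $U$ onto a neighborhood of $w_0$: $p_{d^n}$ is biholomorphic on $U$ by hypothesis, and $\Psi$ is biholomorphic on $\D_R$. Hence the small loop $\gamma_1$ maps to a small simple loop around $w_0$. Continuing $\widetilde\Psi$ along $\gamma_1$ is therefore the same as continuing the branch of $f^{-n}$ that passes through $x_0$ once around $w_0$.

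Next I compute the local degree of $f^n$ at $x_0$. Since $f^{n-n_c}(x_0)=c$ and $c$ is a critical point, $\deg_{x_0} f^n = \deg_{x_0} f^{n-n_c}\cdot \deg_c f\cdot \deg_{f(c)} f^{n_c-1} \ge 2$. To get the stronger bound $2^{n-n_c+1}$, I would use the structure of the orbit in the disconnected case. Since $n_c$ is the first time $f^{n_c}(c)$ lands in $\Psi(\D_R)$, and $c\in\partial U_\infty(f)$ is the critical point on the equipotential, the points $f^{j}(x_0)$ for $0\le j< n-n_c$ lie on successive preimage levels of the critical equipotential. Each of these should be a critical point of $f$ itself, or at least the preimage structure should force the winding number to double at each step. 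Concretely, I would argue that near $x_0$ the map $f^n$ looks like $\xi\mapsto \xi^{k}$ with $k\ge 2^{n-n_c+1}$. The one loop around $w_0$ then permutes the $k$ local preimages cyclically, sending $x_0$'s sheet to a different sheet, and after $k$ repetitions it returns.

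To translate this into $\zeta$, note that the continued function after $\gamma_2$ equals $\Psi(\zeta z)$ near $\gamma(0)$. Since continuation along $\gamma[0,s)^{-1}$ is injective on germs, the order of $\zeta$ equals the order of the local monodromy of $\gamma_1$ acting on the germ $\widetilde\Psi$ at $\gamma(s)$. That local monodromy has order exactly the size of the cycle containing $\widetilde\Psi$, namely $\deg_{x_0}f^n$. The order of $\zeta$ is then at least this local degree, which gives the claim.

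The main obstacle is the lower bound $\deg_{x_0} f^n\ge 2^{n-n_c+1}$. The factor $2$ from $c$ is immediate, but the doubling along the remaining $n-n_c$ preimage steps is not automatic, since generic preimages of $c$ are not critical. If a pure local-degree count fails, I would instead use the monodromy of the whole preimage tree. Going around $w_0$ swaps sheets at the critical level, and a sheet swap at level $j$ is conjugated by the $d^j$-fold rotation structure of $\Psi(\zeta z)$. The resulting $\zeta$ is then forced to have a nontrivial component at each of the $n-n_c+1$ dyadic levels, which I would extract from the branch counting in Proposition~\ref{prop:loop-monodromy}.
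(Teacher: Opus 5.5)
Your local analysis around $w_0=f^{n_c}(c)$ is fine, but it only shows $\zeta\neq 1$. That is exactly the paper's base case $n=n_c$. The step meant to produce the bound $2^{n-n_c+1}$ fails. You claim that $\ord(\zeta)$ equals the order of the local monodromy of $\gamma_1$ on $\widetilde\Psi$, i.e.\ $\deg_{x_0}f^n$; this is wrong for two reasons.

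First, that local degree is usually just $2$. Take $f(z)=z^2+a$ with $a$ outside the Mandelbrot set: then $c=0$ is the only finite critical point and $n_c=1$. For $n=2$ the point $x_0=\pm\sqrt{-a}$ is not critical, so $\deg_{x_0}f^2=2$, whereas the lemma asserts $\ord(\zeta)\ge 4$.

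Second, injectivity of continuation along $\gamma[0,s)^{-1}$ transports only the \emph{fact} that the germ changes, not any order. Continuing $\Psi(\zeta z)$ along $\gamma_2$ is the same as continuing $\Psi$ along the rotated loop $\zeta\cdot\gamma_2$, so $\gamma_2^k$ does not act as $z\mapsto\zeta^k z$. Consequently the cycle length of $\Psi(z)$ under the monodromy permutation of $\{\Psi(\eta z):\eta\in\mu_{d^n}\}$ is unrelated to the multiplicative order of $\zeta$. In the quadratic example, $\gamma_2$ simply swaps $\Psi(z)$ and $\Psi(\zeta z)$ (cycle length $2$), although $\ord(\zeta)=4$. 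Your fallback about \emph{dyadic levels} names no actual mechanism, and $d$ need not be $2$.

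The missing idea is to push the loop forward by $p_d$ using $\Psi(z^d)=f(\Psi(z))$. If $\widetilde\Psi$ is the continuation of $\Psi$ along $\gamma$, then $\tilde z=z^d\mapsto f(\widetilde\Psi(z))$ is the continuation of $\Psi$ along $p_d\circ\gamma$ via the level-$(n-1)$ relation. Its limiting value is $f(x_0)\in f^{-(n-1-n_c)}(c)$. So $p_d\circ\gamma$ is a precritical continuation path for $(c,z_0^d)$ at level $n-1$, and $p_d(\gamma_1)$ is again a small simple loop around $z_0^d\in E_{n-1,c}$. Applying $f$ to $\Psi(z)\mapsto\Psi(\zeta z)$ shows that $p_d(\gamma_2)$ sends $\Psi(\tilde z)$ to $\Psi(\zeta^d\tilde z)$.

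By induction, $\ord(\zeta^d)\ge 2^{n-n_c}$. Since $\ord(\zeta)\mid d^n$ and $\ord(\zeta)\ge 2$, we have $\gcd(d,\ord(\zeta))\ge 2$. Hence $\ord(\zeta)=\ord(\zeta^d)\cdot\gcd(d,\ord(\zeta))\ge 2^{n-n_c+1}$.

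Equivalently, push forward by $p_{d^{n-n_c}}$ all the way down to level $n_c$. There your Puiseux/local-degree argument gives $\zeta^{d^{n-n_c}}\neq 1$. Together with $\zeta^{d^n}=1$, this forces some prime $p\mid d$ to divide $\ord(\zeta)$ at least $n-n_c+1$ times, so $\ord(\zeta)\ge 2^{n-n_c+1}$. The factor $2$ per level comes from this arithmetic of $d$-th powers of roots of unity, not from any ramification of $f^n$ at $x_0$.
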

\begin{figure}
    \centering
    \includegraphics[width=1\linewidth]{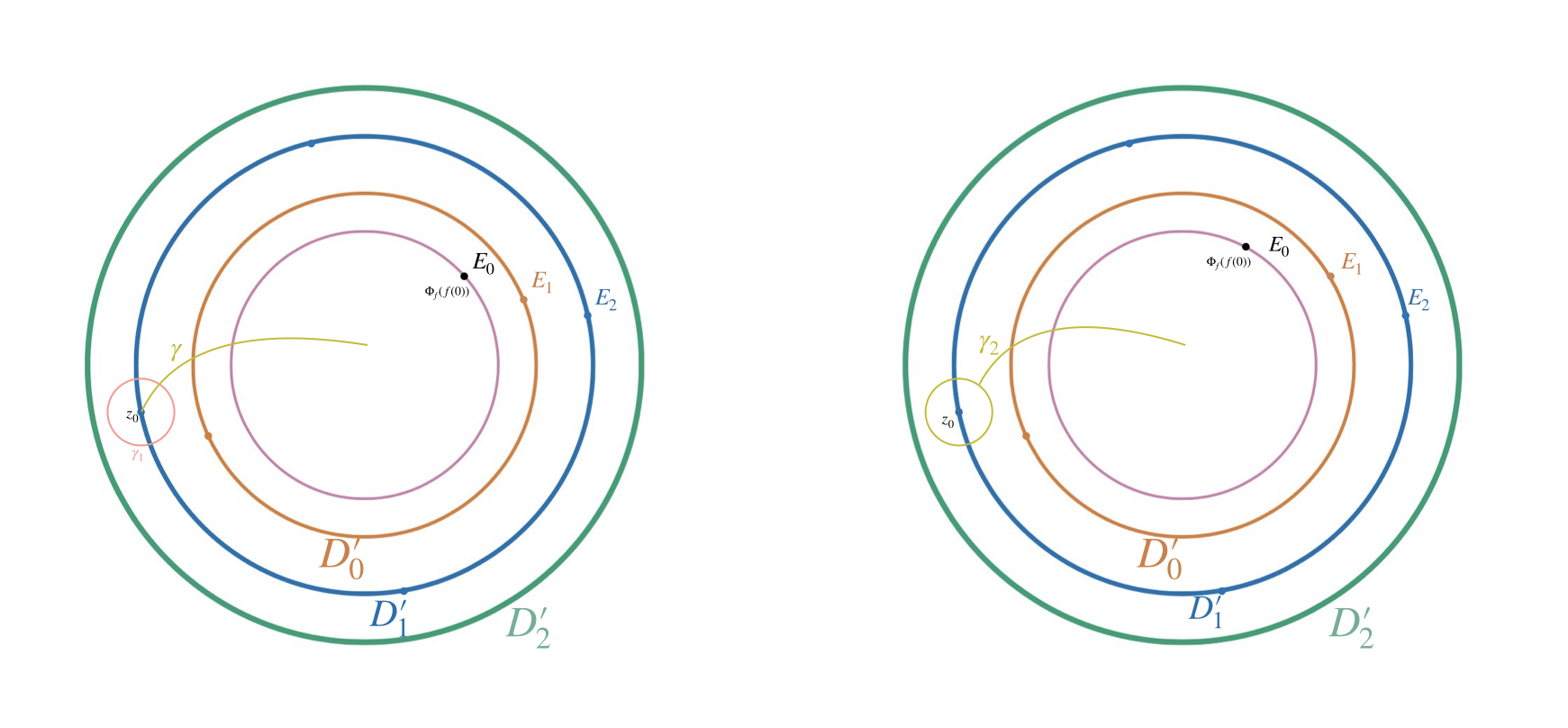}
    \caption{The combination of paths $\gamma$ and $\gamma_1$ to get a loop $\gamma_2$ around $z_0$.}
    \label{fig:combined-paths}
\end{figure}
\begin{proof}
    We argue by induction on $n$.

\medskip

\noindent\textbf{Base case: $n=n_c$.}
We continue $\Psi$ using
\[
\Psi(z^{n_c}) \;=\; f^{n_c}(\Psi(z)).
\]
Set $w:=\gamma(s)$. Analytically continuing $\Psi$ along $\gamma([0,s])$ to the point $w$, the pair
\[
\bigl(\Psi(z),\,\Psi(z^{n_c})\bigr)
\]
parametrizes the curve $y=f^{n_c}(x)$ near the point $\bigl(c,f^{n_c}(c)\bigr)$.

Near the critical value $f^{n_c}(c)$, the curve $y=f^{n_c}(x)$ admits a Puiseux-type parametrization
\[
\bigl(g(t),\,t^i+f^{n_c}(c)\bigr),
\]
where $i\ge 2$ is minimal and $g$ is analytic near $0$ with $g(0)=c$. If $\gamma_1$ is chosen sufficiently small, then $w$ is close to $z_0$, hence $\Psi(w)$ is close to $c$ by the assumption on $\gamma$. Therefore the local branch
\[
\bigl(\Psi(z),\Psi(z^{n_c})\bigr)
\]
must coincide with $\bigl(g(t),t^i+f^{n_c}(c)\bigr)$ in a neighborhood of $(c, f^{n_c}(c))$. 

Continuing $\Psi$ along $\gamma_1$ is equivalent (via $\Psi(z^{n_c})=f^{n_c}(\Psi(z))$) to continuing the second coordinate $\Psi(z^{n_c})$ along the loop $p_{d^{n_c}}(\gamma_1)$. By assumption on $\gamma_1$, the loop
\[
\Psi\!\bigl(p_{d^{n_c}}(\gamma_1)\bigr)
\]
is a simple loop around $f^{n_c}(c)$. Since the two parametrizations agree, this continuation acts by
\[
\bigl(g(t),\,t^i+f^{n_c}(c)\bigr)\longmapsto \bigl(g(\lambda t),\,t^i+f^{n_c}(c)\bigr)
\]
for some primitive $i$th root of unity $\lambda$. Hence, after continuing along $\gamma_1$, $\Psi$ is sent to a local analytic function $\widetilde\Psi$ with $\widetilde\Psi \neq \Psi$ because $g(t)\not= g(\lambda t)$ by the minimality of $i$.

Finally, continuing back along $\gamma([0,s))^{-1}$, Proposition~\ref{prop:loop-monodromy} implies that we end at $
\Psi(\zeta z)$ for some $\zeta\in\mu_{d^{n_c}}$.
Since the continuation produced a different branch, we must have $\zeta\neq 1$, so $\zeta$ has order at least $2$. This proves the base case.

\medskip

\noindent\textbf{Inductive step.}
Assume the lemma holds for $n-1\ge n_c$. Let $\gamma_1,\gamma_2$ be as in the statement of the lemma, and set $\tilde z:=z^d$. We first continue $\Psi$ along $\gamma_2$ using \eqref{eqn:nth-alg-eqn}. Then the functional equation
\begin{equation}\label{eqn:first-func-eqn}
\Psi(\tilde z)=f(\Psi(z))
\end{equation}
defines $\Psi(\tilde z)$ along the path $p_d(\gamma_2)$. Since $z^d$ is a finite covering map from $D_n$ onto $D_{n-1}$, the resulting function $\Psi(\tilde z)$ is analytic in the variable $\tilde z$.

Observe that
\[
\Psi\!\bigl(\tilde z^{d^{n-1}}\bigr)=\Psi(z^{d^n})=f^n(\Psi(z))=f^{n-1}(\Psi(\tilde z)).
\]
Moreover,
\[
\lim_{x\to 1}\Psi\!\bigl(p_d(\gamma(x))\bigr)
=\lim_{x\to 1} f(\Psi(\gamma(x)))
\in f^{-(n-n_c-1)}(c),
\]
since by assumption $\lim_{x\to 1}\Psi(\gamma(x))\in f^{-(n-n_c)}(c)$.
Therefore, by the inductive hypothesis, analytically continuing $\Psi(\tilde z)$ along $p_d(\gamma_2)$ sends $\Psi(\tilde z)$ to $\Psi(\zeta \tilde z)$ for some
\[
\zeta\in\mu_{d^n}
\quad\text{with}\quad
\ord(\zeta)\ge 2^{\,n-n_c}.
\]

On the other hand, Proposition~\ref{prop:loop-monodromy} shows that continuing $\Psi(z)$ along $\gamma_2$ sends it to $\Psi(\zeta_1 z)$ for some $\zeta_1\in\mu_{d^n}$. Applying \eqref{eqn:first-func-eqn} along $\gamma_2$ gives
\[
\Psi(\zeta w)=f(\Psi(\zeta_1 z))
=\Psi(\zeta_1^d z^d)
=\Psi(\zeta_1^d w),
\]
hence $\zeta=\zeta_1^d$. Consequently,
\begin{equation}\label{eqn:orders}
\ord(\zeta)=\frac{\ord(\zeta_1)}{\gcd\!\bigl(d,\ord(\zeta_1)\bigr)}.
\end{equation}

Now $2\le 2^{n-n_c}\le \ord(\zeta)\le \ord(\zeta_1)$ and $\ord(\zeta_1)\mid d^n$. In particular, $\gcd(d,\ord(\zeta_1))\ge 2$. Combining this with \eqref{eqn:orders} and the bound $\ord(\zeta)\ge 2^{n-n_c}$ yields
\[
\ord(\zeta_1)\;\ge\;2\,\ord(\zeta)\;\ge\;2^{\,n-n_c+1},
\]
which is exactly the desired conclusion.

\end{proof}
It is crucial in our proofs to show that precritical continuation paths exist. The next proposition shows that any path $\gamma \subset D_n$ with $\gamma(0) \in \D_R$ and $\gamma(1) \in E_{n,c}$ can become a precritical continuation path after a suitable rotation by a $d^n$-th root of unity. 
\begin{proposition}
\label{prop:path-for-non-triv-monodromy}
Suppose $c \in C_{f}$, $n \ge n_c$, $y_0 \in f^{-(n-n_c)}(c)$, and a path $\gamma:[0,1) \lra D_n$ with $\gamma(0) \in \D_R$ and $\lim_{x \to 1} \gamma(x) = z_0$ for some $z_0 \in E_{n,c}$ are given. Then, there exists some $\zeta \in \mu_{d^n}$ such that continuing $\Psi$ along $\gamma' := \zeta \cdot \gamma$ satisfies $\lim_{x\to 1} \Psi(\gamma'(x)) = y_0$. 
\end{proposition}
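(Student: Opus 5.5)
The plan is to exploit the symmetry of the domain $D_n$ and of the relation \eqref{eqn:nth-alg-eqn} under rotation by $d^n$-th roots of unity, together with the fact that the possible limit values of the continuation at $z_0$ form a full fiber of $f^n$ over $\Psi(z_0^{d^n}) = f^{n_c}(c)$. First observe that for any $\zeta \in \mu_{d^n}$ the path $\zeta\cdot\gamma$ still lies in $D_n$. Indeed, $D'_n$ is a punctured disk, hence rotation invariant, and $U_n$ is by definition invariant under multiplication by $\mu_{d^n}$: for $m \le n$, $E_m$ is $\mu_{d^m}$-invariant, and $\mu_{d^m}\subset\mu_{d^n}$. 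Also $\zeta\gamma(0)\in\D_R$, and $\zeta\gamma(x)\to \zeta z_0$, where $(\zeta z_0)^{d^n}=z_0^{d^n}$, so $\zeta z_0\in E_{n,c}$ as well.

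Next I compare the continuation of $\Psi$ along $\zeta\cdot\gamma$ with the continuation along $\gamma$. Let $\widetilde\Psi$ denote the continuation of $\Psi$ along $\gamma$. The function $z\mapsto \widetilde\Psi(\zeta^{-1}z)$, continued along $\zeta\gamma$, starts at the germ $\Psi(\zeta^{-1}z)$ near $\zeta\gamma(0)$ and satisfies $f^n(\widetilde\Psi(\zeta^{-1}z)) = \Psi(z^{d^n})$. Near the start, $\Psi(\zeta^{-1} z)$ and $\Psi(z)$ are both branches of the curve $f^n(x)=y$ over $\Psi(z^{d^n})$, and by uniqueness of analytic continuation of algebraic branches they are just two different starting branches. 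So continuing $\Psi$ along $\zeta\gamma$ is the same as continuing the branch $\Psi(\zeta\cdot\zeta^{-1} z)$. Concretely, I will write the continuation of $\Psi$ along $\zeta\gamma$, evaluated at $\zeta\gamma(x)$, as the continuation of the germ $w\mapsto\Psi(\zeta w)$ along $\gamma$, evaluated at $\gamma(x)$.

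The main step is to show that, as $\zeta$ ranges over $\mu_{d^n}$, the germs $\Psi(\zeta w)$ near $\gamma(0)$ give all $d^n$ local branches of the curve $f^n(x) = \Psi(w^{d^n})$. These branches are pairwise distinct because $\Psi$ is injective on $\D_R$, and there are at most $d^n$ of them, so this is exactly the count used in Proposition~\ref{prop:loop-monodromy}. Analytic continuation along $\gamma$ avoids the points where branches collide, since $U_n$ contains all $z$ with $\Psi(z^{d^n})$ a critical value of $f^n$ reached before $z_0$. It therefore induces a bijection between the $d^n$ branches at the start and the $d^n$ branches at points $\gamma(x)$ with $x<1$. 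Passing to the limit $x\to1$, the limits of these branches are exactly the points of $f^{-n}(f^{n_c}(c))$, each attained: the map from branches to limits is onto the fiber because near $\Psi(z_0^{d^n})$ every preimage point carries at least one local branch. Since $y_0\in f^{-(n-n_c)}(c)\subset f^{-n}(f^{n_c}(c))$, some $\zeta$ gives $\lim_{x\to 1}\Psi(\gamma'(x))=y_0$, with limits existing by the discussion around \eqref{eqn:extend-to-endpt}.

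The expected obstacle is this surjectivity onto the fiber at the endpoint. The continuation along $[0,1)$ is a bijection of branches, but at $z_0$ several branches merge at critical preimages, and I must check that every preimage point, including $y_0$, is a limit of some branch. I will handle this by local properness of $f^n$: near any $y\in f^{-n}(f^{n_c}(c))$, $f^n$ is a branched cover onto a neighborhood of $f^{n_c}(c)$. For $x$ close to $1$, the point $\Psi(\gamma(x)^{d^n})$ therefore has a preimage near $y$, and that preimage is the value of some branch. By finiteness, one branch must realize preimages near $y$ along a sequence $x\to1$, and since the limit of that branch exists, it equals $y$.
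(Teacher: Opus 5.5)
Your proof is correct and is essentially the paper's argument run in the opposite direction. The paper fixes, near the endpoint, the branch of $f^{-n}$ that tends to $y_0$ and continues it backward along $\gamma$ to identify it with some $\Psi(\zeta z)$ via Proposition~\ref{prop:loop-monodromy}. You instead continue all $d^n$ germs $\Psi(\zeta z)$ forward and use local surjectivity of $f^n$ at $y_0$ together with pigeonhole; both rest on the same facts (the rotated germs exhaust the branches, and no branches collide along paths in $D_n$).

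One small correction: $U_n$ is $\mu_{d^n}$-invariant simply because it is defined as the $\mu_{d^n}$-orbit of $\bigcup_{m\le n}E_m$. The $\mu_{d^m}$-invariance of each $E_m$ would not by itself give this.
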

\begin{proof}
We can choose a small neighborhood $N$ around $z_0$ such that $\Psi(z^{d^n})$ sends $N$ biholomorphically onto a neighborhood $N'$ of $f^{n_c}(c)$. By our assummption on $\gamma$, the path must eventually be contained in $N$. So, there exists $0 < s_0 < 1$ such that $\gamma(s) \in N$ for all $s \ge s_0$.
\begin{figure}[t] % [h] = here, [t] = top, [b] = bottom
    \centering
    \includegraphics[width=0.5\textwidth]{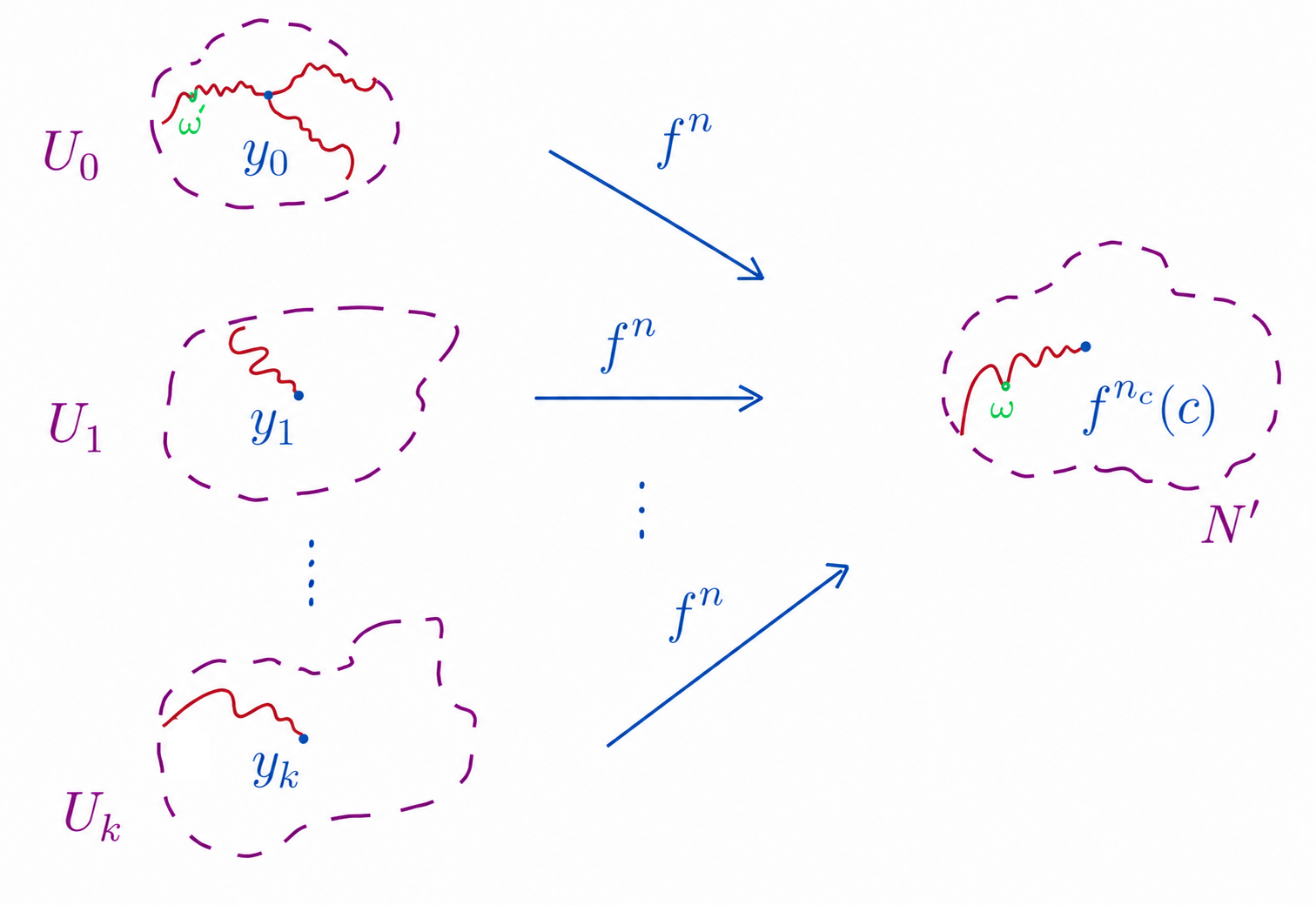} % Adjust width
    \caption{Preimages of $N'$ and $\Psi(p_{d^n}(\gamma_1)) \cap N'$ under $f^n$.}
    \label{fig:preimages-of-N'}
\end{figure}

Let $y_0, \dots, y_k$ be all the preimages of $f^{n_c}(c)$ under $f^{n}$. If we choose $N$ and $N'$ to be sufficiently small, we can find pairwise disjoint neighborhoods $U_i$ of $y_i$ such that $f^n(U_i) = N'$ for all $0 \le i \le k$ as shown in Figure \ref{fig:preimages-of-N'}. Now let $w := \Psi(\gamma(s_0)^{d^n})$ and let $w'$ be a preimage of $w$ under $f^n$ inside $U_0$ (see Figure \ref{fig:preimages-of-N'}). 

Choose a branch of $f^{-n}$ near $w$ satisfying $f^{-n}(w) = w'$. Using the fact that the preimages of $f^{-n}(p_{d^n}(\gamma[s,1))) \cap U_i$ are pairwise disjoint, we conclude that the analytic continuation of the branch of $f^{-n}$ along $\Psi(p_{d^n}(\gamma[s_0,1)))$ satisfies $\lim_{x \to f^{n_c}(c)}f^{-n}(x) = y_0$. Therefore, if we let 
\[
\widetilde{\Psi}(z) = f^{-n}(\Psi(z^{d^n})), 
\]
$\widetilde{\Psi}$ must satisfy 
\[
\lim_{s \to 1}\widetilde{\Psi}(\gamma(s)) = y_0. 
\]
If we use equation \eqref{eqn:nth-alg-eqn} to continue $\widetilde{\Psi}$ along $\gamma[0,s]^{-1}$ back to $\D_R$ we end up with $\Psi(\zeta z)$ for some $\zeta \in \mu_{d^n}$ by Proposition \ref{prop:loop-monodromy}. Finally, note that continuing $\Psi(\zeta z)$ along $\gamma$ is the same as continuing $\Psi(\tilde z)$ along $\zeta\cdot \gamma$ where $\tilde z = \zeta z$. So, we conclude that the path $\gamma' := \zeta \cdot \gamma$ has the desired property.  
\end{proof}

\section{Proof of Theorem \ref{thm:DAS-totally-disconnected}}
\label{sec:pf-of-DAS-disconnected}

In this section, we prove Theorem \ref{thm:DAS-totally-disconnected}, which proves dynamical Ax-Schanuel holds when $J_f$ is disconnected and $(h_0,\dots,h_{n-1})$ is a branch of an algebraic subset of $\C^n$. We start by proving the next key lemma which is a consequence of \cite[Th\'eor\`eme 1]{Laurent}.

\begin{lemma}
\label{lem:many-preimages-then-special}
Let $C$ be an irreducible curve in $\bG_m^2$ and $(c_1,c_2) \in \bG_m^2(\C)$ be given. Suppose there exists a sequence of distinct pairs $(a_n,b_n)$ of pairs of non-negative integers and a sequence of points $(z_n,w_n) \in C(\C)$ such that 
\[
z_n^{d^{a_n}} = c_1 \text{ and } w_n^{d^{b_n}} = c_2,
\]
for every $n \ge 1$. Then, $C$ is either of the form $\{x = c\}$ or $\{y = c\}$ for some $c \in \C^\ast$, or it must be a translate of an algebraic subgroup of $\bG_m^2(\C)$. In the latter case, if we further assume that $|c_1|,|c_2| \ne 1$, then $C$ must be a translate of an algebraic subgroup by a torsion element of $\bG_m^2(\C)$. 
\end{lemma}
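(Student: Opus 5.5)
The plan is to read the hypothesis as saying that the points $(z_n,w_n)$ lie in the division group of a finitely generated subgroup of $\bG_m^2$. Then Laurent's theorem gives the structure of $C$, and a separate computation with absolute values handles the torsion refinement.

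\textbf{Step 1: reduction to Laurent's theorem.} Let $\Gamma\subset\bG_m^2(\C)$ be the subgroup generated by $(c_1,1)$ and $(1,c_2)$. Let $\Gamma'$ be its division group, i.e.\ the set of points having some power in $\Gamma$. Each $(z_n,w_n)$ satisfies $(z_n,w_n)^{d^{a_n+b_n}}\in\Gamma$, so $(z_n,w_n)\in C(\C)\cap\Gamma'$.

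I need $C\cap\Gamma'$ to be infinite. Suppose instead that only finitely many distinct points occur. Then one point $(z,w)$ is repeated for infinitely many distinct pairs $(a_n,b_n)$, so $a_n$ or $b_n$ takes at least two values along these indices. If $a_n$ takes two values $a<a'$, then $z^{d^a}=z^{d^{a'}}$, so $z$ is a root of unity and $c_1$ is torsion; similarly for $b_n$.
\begin{itemize}
\item When $|c_1|,|c_2|\neq1$ this is impossible.
\item In general, this degenerate case must be excluded by reading the hypothesis as giving pairwise distinct points, which is how the lemma is used later. I will state this explicitly in the proof.
\end{itemize}

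With $C\cap\Gamma'$ infinite, \cite[Th\'eor\`eme 1]{Laurent} says $C\cap\Gamma'$ is a finite union of sets $\gamma H\cap\Gamma'$ with $H$ an algebraic subgroup. Since $C$ is irreducible of dimension one and contains infinitely many of these points, $C=\gamma H$ for a one-dimensional algebraic subgroup $H$. Write $H^\circ=\{x^py^q=1\}$ with $\gcd(p,q)=1$.
\begin{itemize}
\item If $q=0$ or $p=0$, then $C$ is $\{x=c\}$ or $\{y=c\}$.
\item Otherwise $C$ is a translate of a subgroup, which proves the first assertion.
\end{itemize}

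\textbf{Step 2: the torsion refinement.} This is the main obstacle. Assume $p,q\neq0$ and $|c_1|,|c_2|\neq1$. The character $x^py^q$ is constant on $C$, say equal to $\kappa$, because $H/H^\circ$ is finite and $C$ is irreducible. So it suffices to show $\kappa$ is a root of unity. Raising $z_n^pw_n^q=\kappa$ to the power $d^{a_n+b_n}$ gives
\[
\kappa^{d^{a_n+b_n}}=c_1^{p d^{b_n}}c_2^{q d^{a_n}}.
\]
Put $L_i=\log|c_i|\neq0$ and take $\log|\cdot|$:
\[
\log|\kappa| = pL_1d^{-a_n}+qL_2d^{-b_n}\quad\text{for all } n.
\]

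First I show $a_n,b_n\to\infty$ and $b_n-a_n$ is eventually constant.
\begin{itemize}
\item If $b_n$ took some value $b$ for two indices with different $a_n$, the displayed identity would give two distinct values of $pL_1d^{-a_n}$, a contradiction. So each value of $b_n$ pairs with a single value of $a_n$, and by symmetry the same holds for $a_n$.
\item Hence, along a subsequence, both $a_n\to\infty$ and $b_n\to\infty$, which forces $\log|\kappa|=0$.
\item The identity then reads $pL_1d^{b_n}+qL_2d^{a_n}=0$, so $d^{b_n-a_n}=-qL_2/(pL_1)$ is constant and $b_n=a_n+k$ for a fixed $k$.
\end{itemize}

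Next, set $\beta=c_1^{pd^k}c_2^{q}$. The exact equation becomes $\kappa^{d^{2a_n+k}}=\beta^{d^{a_n}}$. Take two indices with $a_n<a_m$ and raise the $n$-th equation to the power $d^{a_m-a_n}$. Comparing with the $m$-th equation gives
\[
\kappa^{d^{a_m+a_n+k}}=\kappa^{d^{2a_m+k}},
\]
and the exponents differ since $a_m\neq a_n$. Thus $\kappa$ is a root of unity, and $C=\gamma H$ can be rewritten as a torsion translate.

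The delicate points are the case analysis on bounded versus unbounded exponents, and making sure the ``translate'' supplied by Laurent's theorem is indeed captured by the single character $x^py^q$ even when $H$ is disconnected.
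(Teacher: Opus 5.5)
Your proof is correct and follows essentially the same route as the paper. Laurent's theorem forces $C$ to be a coset $\{x^py^q=\kappa\}$; taking $\log|\cdot|$ gives $\log|\kappa|=0$ and $b_n-a_n$ constant; and comparing two indices (the paper's pigeonhole step) makes $\kappa$ a root of unity. Your argument that distinct pairs force $a_n,b_n\to\infty$ is an elementary substitute for the paper's second appeal to Laurent's theorem. Working directly with the character $x^py^q$ also avoids the paper's reduction to $\{y=cx\}$.

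Your caveat about distinctness is well taken. The paper's step ``infinitely many pairs $(z_n,w_n)$ in $C$ are Zariski dense'' tacitly assumes the points are pairwise distinct. That is automatic when $|c_1|,|c_2|\neq 1$, as in the paper's application where both constants have modulus less than $R<1$. It is not automatic in general: for $c_1=1$, the single point $(1,w)$ with $w^{d^b}=c_2$, paired with $(a_n,b_n)=(n,b)$, satisfies the hypotheses on any curve through it. So the first assertion of the lemma as literally stated is false without that assumption.
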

\begin{proof}
Let $\Gamma$ be the division hull of the finitely generated subgroup of $\bG_m^2$ generated by $(c_1,1)$ and $(1,c_2)$. Each pair $(z_n,w_n)$ is then an element of $C \cap \Gamma$ which by Laurent's theorem \cite[Th\'eor\`eme 1]{Laurent} is contained in finitely many algebraic subgroups of the form $\{cx^i = y^j\}$. After replacing $(z_n,w_n)$ with a subsequence we may assume that all $(z_n,w_n)$'s are contained in the same set of one of the following forms 
\begin{enumerate}
    \item $\{x = c\}$ for some $c \in \C^\ast$; or
    \item $\{y = c\}$ for some $c \in \C^\ast$; or
    \item $\{cx^i = y^j\}$ for some $c \in \C^\ast$ and some non-zero integers $i, j$.
\end{enumerate}
In the first two cases we are done. So, we assume that we are in the third situation. Since there are infinitely many pairs $(z_n, w_n)$ in $C$, they are Zariski dense and we conclude that $C = \{cx^i = y^j\}$. We may assume that $i = j = 1$ at the expense of replacing $C$ with its image under the map $(z^i, w^j)$ and replacing $c_1$ and $c_2$ with $c_1^i$ and $c_2^j$. So, we assume that $$C = \{y = cx\}.$$

We have 
\begin{equation}
\label{eqn:z_n-w_n}
z_n^{d^{a_n}} = c_1 \text{ and } w_n^{d^{b_n}} = c^{d^{b_n}}z_n^{d^{b_n}} = c_2, 
\end{equation}
for all $n \ge 1$. Taking logs it follows that $d^{a_n}\log|z_n| = \log|c_1|$ and 
\begin{equation}
\label{eqn:c-c1-c2}
d^{b_n}\log(|c|) + d^{b_n}\log(|z_n|) = d^{b_n}\log(|c|) + d^{b_n - a_n}\log(|c_1|) = \log(|c_2|),
\end{equation}
for all $n \ge 1$. Since the pairs $(a_n,b_n)$ are distinct we get that the pairs $(d^{b_n}, d^{b_n-a_n})$ give infinitely many solutions to the equation 
\[
\log|c|X + \log|c_1|Y = \log|c_2|.
\]
Using Laurent's theorem again and arguing as before we can conclude that $\{\log|c|X + \log|c_1|Y = \log|c_2|\}$ is the coset of an algebraic subgroup of $\bG_m^2$ which forces $\log|c| = 0$ since $\log|c_1|, \log|c_2| \ne 0$ by hypothesis. By equation \eqref{eqn:c-c1-c2} we see that $d^{b_n - a_n} = \frac{\log|c_1|}{\log|c_2|}$ which means that $b_n - a_n$ is always a constant $k \in \Z$. Lastly, by equation \eqref{eqn:z_n-w_n} we conclude that $c^{d^{b_n}} = \frac{c_2}{c_1^{d^k}}$ for all $n \ge 1$. This implies that $c$ must be a root of unity by the pigeonhole principle.  
\end{proof}

The proof of Theorem \ref{thm:DAS-totally-disconnected} in the curve case hinges on analytic continuation along carefully chosen loops. The following definition makes precise what we mean by “carefully chosen”.
\begin{definition}
\label{def:increasing-psi-monodromy}
Let $\mathcal C \subset \C^n$ be a curve with $\mathcal C \cap \mathbb D_R^n \neq \varnothing$.
We say that $\mathcal C$ has the \emph{increasing-order $\Psi$-monodromy property} if there exist
\begin{itemize}
    \item[$\ast$] $c \in C_{f}$,
    \item[$\ast$] a point $w_0 \in \mathbb D_R$ and a neighborhood $U \subset \mathbb D_R$ of $w_0$,
    \item[$\ast$] holomorphic functions $h_1,\dots,h_{n-1}:U\to \mathbb D_R$, such that the map
\[
w \longmapsto (w,h_1(w),\dots,h_{n-1}(w))
\]
parametrizes a local branch of $\mathcal C$ over $U$,
    \item[$\ast$] an increasing sequence of integers $0 \le n_1 < n_2 < \cdots$,
    \item[$\ast$] and loops $\gamma_j:\Ss^1\to D_{n_j}$ based at $w_0$ for all $j \ge 1$ given by a composition of the form
    \[
    p_j \cdot \ell_j \cdot p_j^{-1},
    \]
    where $\ell_j$ is a small loop around some $z_{n_j} \in E_{n_j, c}$ and $p_j$ is a path from $w_0$ to some $w'_j \in \ell_j$,
\end{itemize}
for which the following hold:

1. (\textbf{Trivial monodromy of the coordinates}) For each $i=1,\dots,n-1$, analytic continuation of $h_i$ along $\gamma_j$ returns to the same germ at $w_0$ (equivalently, the monodromy action on $h_i$ along $\gamma_j$ is trivial). In particular, we assume that the germ of $h_i$ obtained at $w'_j$ by continuing along $p_j$ is defined on an open neighborhood of $w'_j$ containing the loop $\ell_j$.

2. (\textbf{Continuations remain in $\D \setminus \U$}) For all $s\in \Ss^1$ and all $i=1,\dots,n-1$, one has
\[
h_i(\gamma_j(s)) \in \mathbb D \setminus \U
\]

3. (\textbf{$\Psi$-monodromy of increasing order}) Analytic continuation of $\Psi$ along $\gamma_j$ sends $\Psi(z)$ to $\Psi(\zeta_{n_j}z)$
for some root of unity $\zeta_{n_j}\in\mu_\infty$ of order at least $2^{\,n_j}$.

\end{definition}
With this definition in hand, we can now show that—after permuting the factors and applying a suitable shift—we may replace a given curve by one that has the increasing-order 
$\Psi$-monodromy property.
\begin{lemma}
\label{lem:permute+rotate-for-monodromy}
Let $\CC$ be an irreducible curve in $\C^n$ intersecting $\D_R^n$ and let $d \ge 2$ be a given integer. Moreover, assume that the projections to the $i$-th coordinate $\pi_i: \CC \lra \bP^1$ are dominant. Then, there exists a permutation $\sigma \in S_n$ and a root of unity $\lambda \in \mu_{d^m}$ for some $m \ge 1$ such that the image $\CC'$ of $\CC$ under the automorphism
\[
(x_1,x_2, \dots,x_{n}) \mapsto (\lambda x_{\sigma(1)}, x_{\sigma(2)}, \dots, x_{\sigma(n)}) 
\]
has the increasing-order $\Psi$-monodromy property as defined in Definition \ref{def:increasing-psi-monodromy}. 
\end{lemma}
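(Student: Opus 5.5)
Choose the first coordinate (after a permutation $\sigma$) as the local parameter, and use it to produce the loops required in Definition~\ref{def:increasing-psi-monodromy}. Since every projection $\pi_i:\CC\to\bP^1$ is dominant, each coordinate of $\CC$ is an algebraic function of $x_1$. Let $B\subset\C$ be the finite set of branch points and poles of these functions. Fix $c\in C_f$. The sets $E_{n,c}$ become infinite and accumulate on circles of radius tending to $1$ as $n\to\infty$, while $B$ is finite. Hence for all large $n$ we can choose $z_n\in E_{n,c}\setminus B$, together with a small disk around $z_n$ avoiding $B$ and $U_n\setminus\{z_n\}$.

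First step: fix the permutation. Near a point of $\CC\cap\D_R^n$ with all coordinates in $\D_R$, the functions $h_i$ stay in $\D_R\subset\D\setminus\U$ locally. To keep condition~2 along long paths, I would choose $\sigma$ so that the first coordinate is ``dominant in modulus''. Concretely, I would pick the coordinate whose continuation can reach $|x_1|$ close to $1$ while the other coordinates remain in a compact part of $\D$ avoiding $\U$. The plan is to take a real-analytic path in $\CC$ starting from the given branch in $\D_R^n$ and moving toward the boundary of $\D^n$, and to let $x_1$ be the first coordinate whose modulus leaves every $\D_{R^{1/d^{k}}}$. Before that time, the others lie in $\D_{R^{1/d^{k}}}$ minus finitely many points of $\U$, which can be avoided by perturbing the path, since $\U\cap\D_{r}$ is finite for $r<1$.

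Second step: build the paths $p_j$ in the $x_1$-plane from $w_0$ to near $z_{n_j}$ inside $D_{n_j}\setminus B$, and take the small loop $\ell_j$ around $z_{n_j}$. Because $\ell_j$ avoids $B$ and is contractible in $\C\setminus B$ once the lasso is formed, the monodromy of each $h_i$ along $\gamma_j=p_j\ell_jp_j^{-1}$ is trivial, which gives condition~1. By Proposition~\ref{prop:path-for-non-triv-monodromy}, there is $\zeta\in\mu_{d^{n_j}}$ such that $\zeta\cdot p_j$ is a precritical continuation path. Lemma~\ref{lem:loop-monodromy-around-E_n} then gives $\Psi$-monodromy of order at least $2^{n_j-n_c+1}$, which yields condition~3 after discarding finitely many $j$ or reindexing.

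Main obstacle: the rotation $\zeta$ depends on $j$, while the lemma allows only one rotation $\lambda$. I would fix the rotation once for a single $n_1$ and then handle larger $n_j$ using preimages. The idea is to choose $z_{n_{j+1}}$ with $z_{n_{j+1}}^{d^{n_{j+1}-n_j}}$ near $z_{n_j}$-type points, and to select the precritical path by lifting through $z\mapsto z^{d}$. Since $f^{-1}$ of a precritical point is precritical, the existing path lifts to a precritical one without any further rotation. This is the step I expect to need the most care, together with keeping the $h_i$ in $\D\setminus\U$ along the lifted paths.
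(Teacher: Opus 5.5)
\textbf{Gap 1: the lifting step.} Your setup matches the paper's: a base path in the $x_1$-plane, lassos around points of $E_{n,c}$ near its exit point, and trivial $h_i$-monodromy because such a lasso is null-homotopic off the branch locus. You handle the $j$-dependent rotation differently, though, and that step is the heart of the lemma. The paper passes to a subsequence $\zeta_j\to\mu$, picks $\lambda\in\mu_{d^m}$ near $\mu$, and rotates the first coordinate by $\lambda$. It then absorbs the leftover rotations into the uniform $\delta$-tube of Claim~\ref{claim:delta-nbhds}.

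You instead assert that a lift through $z\mapsto z^{d^k}$ of one precritical path $P$ for $(c,z_{n_1})$, with $P(0)=w_0\in\mathbb{D}_R$, is precritical ``without any further rotation''. As stated this is unjustified. The lift $q$ starts at a $d^k$-th root $q_0$ of $w_0$, of modulus $|w_0|^{1/d^k}\to 1$, so $q_0\notin\mathbb{D}_R$ for large $k$. The germ of $\Psi$ at $q_0$ therefore depends on the path $r$ from $w_0$ to $q_0$ that you must prepend to get a loop based at $w_0$.
\begin{itemize}
\item Continuing along $r$ gives $f^k(\widetilde\Psi(z))=\Psi^\ast(z^{d^k})$, where $\Psi^\ast$ is $\Psi$ continued along the image of $r$ under $z\mapsto z^{d^k}$.
\item By Proposition~\ref{prop:loop-monodromy}, $\Psi^\ast(z)=\Psi(\zeta z)$ for some $\zeta\in\mu_{d^\infty}$.
\item Then $r\cdot q$ is precritical exactly when $\zeta\cdot P$ is, which is the original problem again.
\end{itemize}

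The missing idea is to force $\zeta=1$ by taking $r\subset\mathbb{D}_{R^{1/d^k}}$, so that its $d^k$-th power stays in $\mathbb{D}_R$. Continuing $f^k(\widetilde\Psi(z))=\Psi(z^{d^k})$ along $q$ then gives $\lim\widetilde\Psi(q(t))\in f^{-(n_1+k-n_c)}(c)$, with $q(1)\in E_{n_1+k,c}$.

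You must also show this constraint is compatible with condition~2, which your proposal leaves open. One way:
\begin{itemize}
\item Let $r$ follow the base path until it first meets the circle $|z|=|w_0|^{1/d^k}$; these points converge to the exit point $y_0$.
\item Take $q_0$ to be the nearest $d^k$-th root of $w_0$.
\item In logarithmic coordinates $q$ is a copy of $P$ scaled by $d^{-k}$, so everything after the base-path segment eventually lies in $B_\epsilon(y_0)\cap\mathbb{D}$.
\end{itemize}
Completed this way, your route would even allow $\lambda=1$. That is what it buys over the paper's accumulation-point argument, but none of this is in your proposal.

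\textbf{Gap 2: control of the $h_i$ near the exit point.} Condition~2 needs control of the $h_i$ on a full neighborhood of $y_0$, not just along the base path. The points $z_{n_j}$, the detours to them and the loops $\ell_j$ lie off the path and arbitrarily close to $\partial\mathbb{D}$. So you need something like: every $h_i$ maps $B_\epsilon(y_0)\cap\mathbb{D}$ into $\mathbb{D}$.

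Your rule ``take $x_1$ to be the first coordinate to approach $\partial\mathbb{D}$'' gives this only if $x_1$ exits strictly first. Simultaneous exits can be forced, e.g.\ if $x_2$ is a Blaschke product in $x_1$ along $\mathcal{C}$. In that case your claim that the other coordinates stay in a compact part of $\mathbb{D}$ fails, and you need a separate argument that the tied $h_i$ still map a neighborhood of $y_0$ in $\mathbb{D}$ into $\mathbb{D}$. The paper handles this by choosing the exit path so that the number of coordinates lying on $\partial\mathbb{D}$ at the exit point is minimal. That minimality is exactly what proves Claim~\ref{claim:delta-nbhds}.

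\textbf{A smaller error.} $\mathcal{U}\cap\mathbb{D}_r$ is not finite for $r<1$: for fixed $m$, the sets $\mu_{d^n}\cdot E_m$ become dense in a circle as $n\to\infty$. It is countable, which is all your perturbation argument needs; this is how the paper uses $\mathcal{U}_{\mathcal{C}}$.
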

\begin{proof}
Let $\pi_i:\CC \lra \bP^1$ denote the projection onto the $i$-th coordinate, and let $S_i$ be the set of branch points of $\pi_i$. Let
\[
\U_\CC := \bigcup_{\zeta \in \mu_\infty}\bigcup_{i \ge 1} \pi_i^{-1}(\zeta \cdot \U) \: \text{ and } \: S_\CC:= \bigcup_{\zeta \in \mu_\infty}\bigcup_{1 \le i \le n}\bigcup_{1 \le j \le n} \pi_i^{-1}(\zeta \cdot \pi_i(S_j)),
\]
and $\CC^\ast := \CC \setminus (\U_\CC \cup S_\CC)$. Note that $\U_\CC$ and $S_\CC$ are countable sets and thus $\CC^\ast$ remains path-connected. In fact, there are uncountably many disjoint paths connecting any two points. Let $\text{x} := (x_0,\dots,x_{n-1})$ be a point in $\CC^\ast \cap \D_R^n$. Choose a path $\gamma$ in $\CC^\ast$ connecting it to a point $\text{y} := (y_0,\dots,y_{n-1}) \in \CC^\ast \cap (\D^n)^c$. After replacing this path with a subpath, we may assume without loss of generality that $\text{y}$ is the first time that $\gamma$ intersects $\CC^\ast \cap (\D^n)^c$. After permuting the coordinates if necessary, we must have 
\begin{equation}
\label{eqn:pts-on-bndry}
y_0,\dots,y_k \in \partial\D \text{ and } y_{k+1}, \dots, y_{n-1} \in \D,
\end{equation}
for some $k \le n - 1$. Note that we can take $k$ to be minimal so that if $\tilde{\gamma}$ is another path in $\CC^\ast$ satisfying 
\begin{itemize}
    \item[$\ast$] $\tilde{\gamma(0)} \in \D_R^n$
    \item[$\ast$] $\tilde{\gamma(s)} \in \D^n$ for all $ 0 \le s < 1$, and 
    \item[$\ast$] $\tilde{\gamma}(1)$ has $k'$ coordinates in $\partial\D$ and $n - k'$ coordinates in $\D$,
\end{itemize}
then $k \le k'$.

Let $\gamma' := \pi_1(\gamma)$. Thus, $\gamma'(0) = x_0$ and $\gamma'(1) = y_0$. We see that $\CC$ can be parametrized near $x_0,\dots,x_{n-1}$ as 
\[
(w,h_1(w),\dots,h_{n-1}(w)),
\]
and by the definition of $\gamma'$ we see that continuing each $h_i$ along $\gamma'$ we must have $h_i(\gamma'(s)) \in \D$ and $h_i(\gamma'(1)) = y_i$ for all $0 \le s < 1$ and $1 \le i \le n - 1$.

\begin{claim}
\label{claim:delta-nbhds}
There exists $\delta > 0$ such that the germ of $h_i$ at $\gamma'(s)$ obtained by continuing along $\gamma'$ is defined on $B_\delta(\gamma'(s))$ and satisfies
\[
h_i(B_\delta(\gamma'(s)) \cap \D) \subset \D,
\]
for all $s \in I$ and all $1 \le i \le n - 1$. 
\end{claim}
\begin{proof}
Choose $\epsilon > 0$ such that all $h_i$ are defined on a ball $B_\epsilon(y_0)$. If $\epsilon > 0$ is small enough, we may assume that $h_i(B_\epsilon(y_0)) \subset \D$ for all $i \ge k + 1$ using equation \eqref{eqn:pts-on-bndry}. Next, we want to establish that 
\begin{equation}
\label{eqn:h_i-D-to-D}
h_i(B_\epsilon(y_0) \cap \D) \subset \D,
\end{equation}
for all $1 \le i \le n-1$.
Suppose for the sake of contradiction that 
\[
h_i(z') \in \D^c
\]
for some $z' \in B_\epsilon(y_0) \cap \D$ and some $1 \le i \le n - 1$. Note that there exists $s' < 1$ such that $\gamma'(s) \in B_\epsilon(y_0)$ for all $s > s'$. Let $\gamma''$ be a path obtained by branching from $\gamma$ at $\gamma'(s_0)$ for some $s_0 > s$ and taking a simple path from $\gamma'(s_0)$ to $z'$ as shown in Figure \ref{fig:branched-path}. After replacing $\gamma''$ by a subpath if necessary, we may assume that, upon analytically continuing the functions $h_i$ along $\gamma''$, the point $\gamma''(1)$ is the first value for which
\[
\bigl(\gamma''(s), h_1(\gamma''(s)), \dots, h_{n-1}(\gamma''(s))\bigr)
\]
leaves $\mathbb{D}^n$. Hence, after possibly permuting the coordinates, we have reduced $k$ by at least one: the first coordinate now also remains in $\mathbb{D}$, together with the coordinates $k+1,\dots,n$. This contradicts the minimality of $k$. Hence, Equation \ref{eqn:h_i-D-to-D} must hold for all $1 \le i \le n-1$.

\begin{figure}[t]
    \centering
\includegraphics[width=0.5\linewidth]{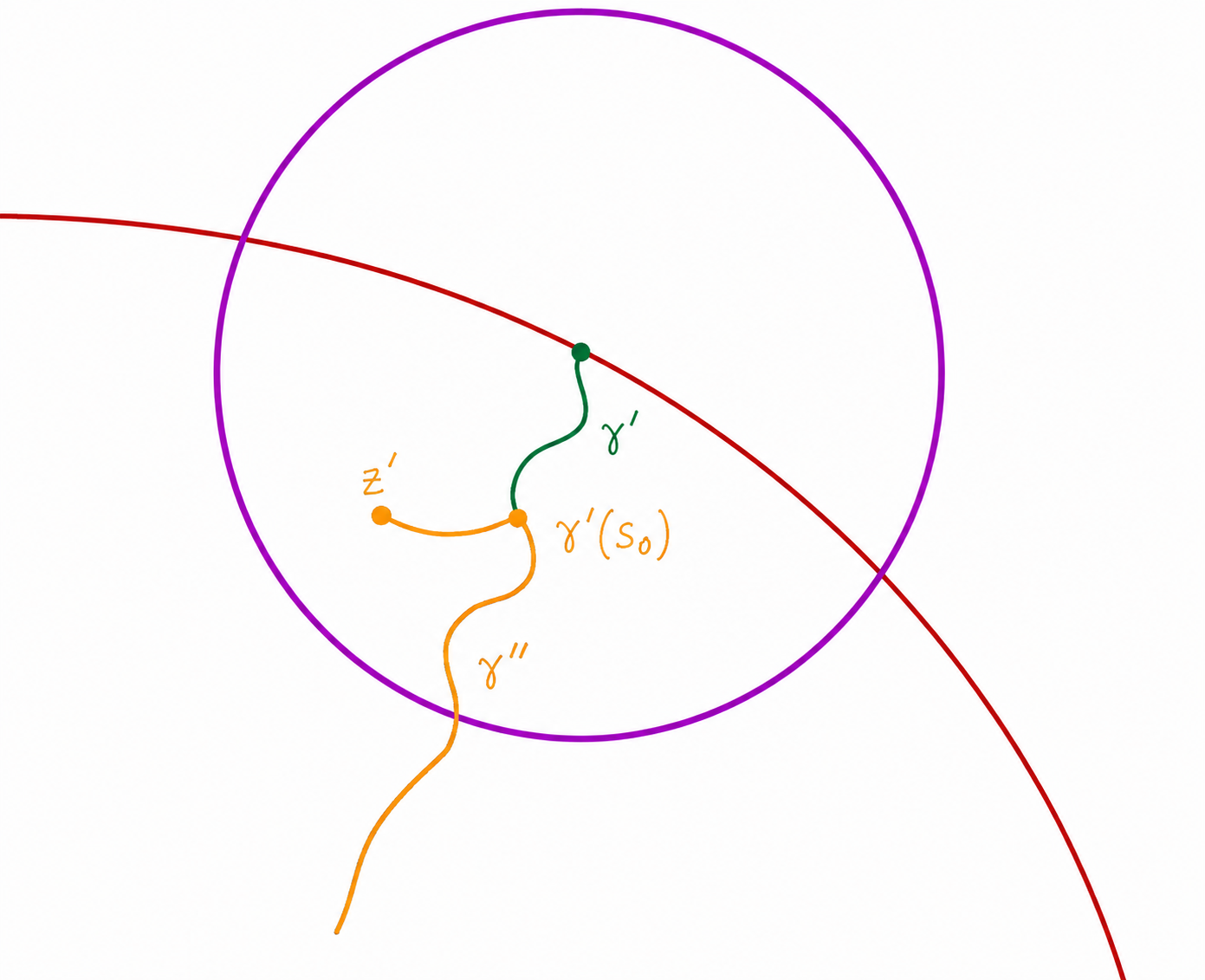}
    \caption{The branched path obtained by taking a turn at $\gamma'(s_0)$.}
    \label{fig:branched-path}
\end{figure}

To finish the proof, fix $s_0>s'$. Since the analytic continuation of each $h_i$ along $\gamma|_{[0,s_0]}$ stays in $\D$, we may choose $\delta>0$ so small that for every $s\le s_0$,
\[
B_\delta(\gamma'(s))\subset \D,
\]
the continued germ of $h_i$ at $\gamma'(s)$ is holomorphic on $B_\delta(\gamma'(s))$, and
\[
h_i\bigl(B_\delta(\gamma'(s))\bigr)\subset \D
\qquad\text{for all }1\le i\le n-1.
\]
This proves the claim for all $s\le s_0$.

Shrinking $\delta$ further if necessary, we also arrange that
\[
B_\delta(\gamma'(s))\subset B_\epsilon(y_0)
\qquad\text{for all } s\ge s_0.
\]
Then, equation \eqref{eqn:h_i-D-to-D} also yields the desired conclusion for $s\ge s_0$, completing the proof.

\end{proof}

\begin{figure}
    \centering
    \includegraphics[width=0.5\linewidth]{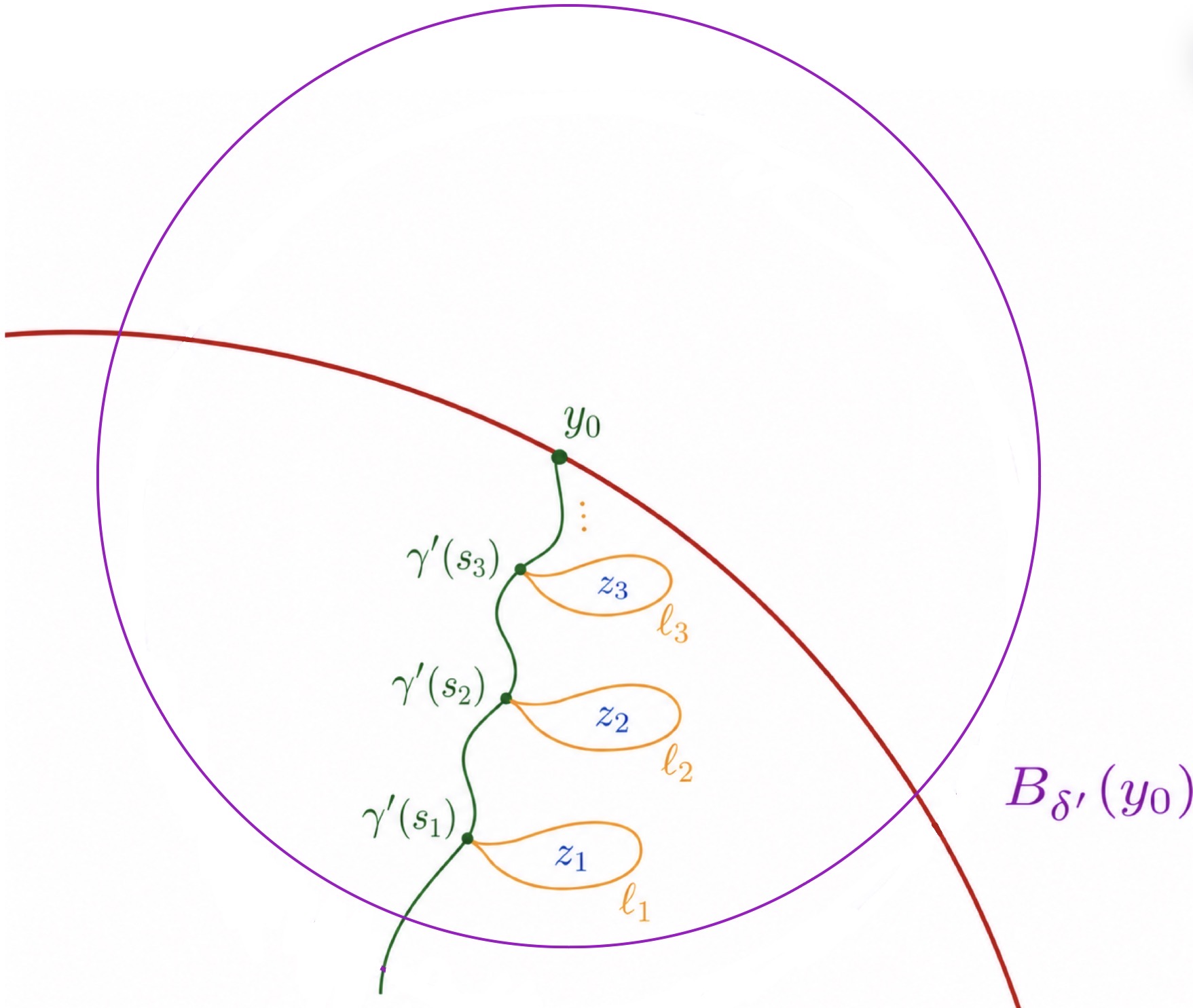}
    \caption{The loops $\ell_j$ around the points $z_j \in E_{j, c}$ inside $B_{\delta'}(y_0)$.}
    \label{fig:loops-around-z_j}
\end{figure}
Let $\delta$ be as in Claim~\ref{claim:delta-nbhds}. Fix $\delta' < \delta/2026$ and choose $c\in C_{f}$. 
Pick a sequence of points $z_j\in E_{j,c}$ with
\[
z_j\in B_{\delta'}(y_0)\qquad\text{for all }j\ge j_0,
\]
for some $j_0 \ge 1$. For each $j \ge j_0$ choose $s_j\in I$ such that $\gamma'(s_j)\in B_{\delta'}(y_0)$, and let $\ell_j$ be a small loop around $z_j$ (as shown by Figure \ref{fig:loops-around-z_j})contained in
\[
B_{\delta'}(y_0)\setminus \pi_1(\U_{\mathcal C}).
\]
Such a loop exists because $\pi_1(\U_{\mathcal C})$ is countable, while there are uncountably many loops in $B_{\delta'}(y_0)$ homotopic to a small circle about $z_j$; hence we can choose one avoiding $\pi_1(\U_{\mathcal C})$.

By Lemma \ref{lem:loop-monodromy-around-E_n} and Proposition ~\ref{prop:path-for-non-triv-monodromy}, for each $j\ge 1$ there exists $\zeta_j\in \mu_{d^j}$ such that analytic continuation of $\Psi$ along the loop
\begin{equation}\label{eqn:loops-gamma-j}
\gamma_j \;:=\; \zeta_j\cdot\Bigl(\gamma'|_{[0,s_j]}\cdot \ell_j \cdot (\gamma'|_{[0,s_j]})^{-1}\Bigr)
\end{equation}
sends the germ $\Psi(z)$ to $\Psi(\zeta'_j z)$ for some $\zeta'_j\in \mu_{d^j}$ with
\[
\ord(\zeta'_j)\ge 2^j.
\]
This means that the lopps $\gamma_j$ satisfy property 3 required in Definition \ref{def:increasing-psi-monodromy}. We now show that the first two properties can also be satisfied simultaneously. 

Since $\{\zeta_j\}\subset \Ss^1$ is an infinite sequence, it has an accumulation point $\mu\in \Ss^1$. Passing to a subsequence, we may assume $\zeta_j\to \mu$ as $j\to\infty$. 
Choose a $d^m$-th root of unity $\lambda$ for some $m \ge 1$ so that
\[
|\arg(\mu/\lambda)|<\epsilon_1
\]
for some $\epsilon_1>0$. Replacing $\mathcal C$ by its image under
\[
(t_1,t_2,\dots,t_n)\longmapsto (\lambda t_1,t_2,\dots,t_n),
\]
we may assume $\lambda=1$.

After discarding finitely many terms and choosing $\epsilon_1$ sufficiently small, we may arrange that for all $j\ge 1$,
\[
\zeta_j\,\gamma'(s)\in B_{\delta'}(\gamma'(s))\quad\text{for all }s\in I,
\qquad\text{and}\qquad
\zeta_j\cdot \ell_j \subset B_{\delta}(\gamma'(s_j)).
\]
Claim~\ref{claim:delta-nbhds} then implies that, for each $i$, the germ of $h_i$ at $\zeta_j\gamma'(s)$ obtained by continuing $h_i$ along $\zeta_j\gamma'|_{[0,s]}$ agrees with the restriction to $\zeta_j\gamma'(s)$ of the continuation of $h_i$ along $\gamma'|_{[0,s]}$, which is holomorphic on $B_\delta(\gamma'(s))$. So, further continuing $h_i$ along $\zeta_j \cdot \ell_j$ acts trivially on $h_j$. Consequently, the loops $\gamma_j$ satisfy property 2 required in Definition~\ref{def:increasing-psi-monodromy}. The fact that proporty 3 is also satisfied is a consequence of the definition of $\mathcal{U}_\CC$. 

\end{proof}

We are now ready to prove Theorem \ref{thm:DAS-totally-disconnected} in the curve case. 
\begin{proposition}
\label{prop:DAS-curve}
Let $\CC \subset \C^n$ be an algebraic curve and suppose that 
\[
(h_0(w), h_1(w),\dots,h_{n-1}(w)),
\]
is a branch of $\CC$ where $h_0,\dots,h_{n-1}$ are analytic functions defined on some small disk $U$ having the property that $h_i(U) \subset \D_R$ for all $0 \le i \le n-1$. Let 
\[
Z = \{(h_0(w),h_1(w),\dots,h_{n-1}(w),\Psi(h_0(w)),\Psi(h_1(w)), \dots,\Psi(h_{n-1}(w))): w \in U\}
\]
and assume that the branch
\[
(\Psi_f(h_0(x)), \dots,\Psi_f(h_{n-1}(x)))
\]
is not contained in a proper $f$-special subvariety of $\C^n$.
Then
\[
\dim_{\mathbb{C}}\bigl(\overline{Z}^{\zar}\bigr)=n+1,
\]
which is equivalent to the functions
\[
h_0(w), \Psi(h_0(w)), \Psi(h_1(w)),\dots,\Psi(h_{n-1}(w))
\]
being algebraically independent over $\mathbb{C}$.
\end{proposition}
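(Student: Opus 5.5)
We argue by induction on $n$, following the outline sketched in the introduction. The case $n=1$ is the transcendence of $\Psi_f$ over $\C(z)$ from \cite{Bottcher-transcendence}, since $Z$ is then the graph of $\Psi$ over $h_0(U)$.

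For the inductive step, note first that if some $h_i$ is constant, then the image lies in a fiber of that coordinate. Such a fiber is a proper $f$-special subvariety, contradicting the hypothesis. So every coordinate projection $\pi_i\colon \CC \to \bP^1$ is dominant. We may then reparametrize so that $h_0(w)=w$. By Propositions~\ref{prop:forward-image-red} and \ref{prop:rotation-red}, permuting coordinates and rotating one coordinate by $\lambda\in\mu_{d^m}$ does not affect the conclusion. Lemma~\ref{lem:permute+rotate-for-monodromy} therefore lets us assume that $\CC$ has the increasing-order $\Psi$-monodromy property.

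Suppose, for contradiction, that there is a nonzero polynomial $P$ with
\[
P\bigl(w,\Psi(w),\Psi(h_1(w)),\dots,\Psi(h_{n-1}(w))\bigr)=0,
\]
chosen of minimal degree in the variable corresponding to $\Psi(w)$. We continue this identity along the loops $\gamma_j$ of Definition~\ref{def:increasing-psi-monodromy}. Property 1 of that definition says the $h_i$ return unchanged. Property 2 lets $\Psi\circ h_i$ be continued along $\gamma_j$, and by Proposition~\ref{prop:loop-monodromy} each $\Psi(h_i(w))$ returns as $\Psi(\xi_{i,j}h_i(w))$ with $\xi_{i,j}\in\mu_{d^\infty}$. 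Property 3 sends $\Psi(w)$ to $\Psi(\zeta_{n_j}w)$, where $\operatorname{ord}(\zeta_{n_j})\to\infty$. This gives infinitely many relations
\[
P\bigl(w,\Psi(g_0w),\Psi(g_1h_1(w)),\dots,\Psi(g_{n-1}h_{n-1}(w))\bigr)=0, \qquad g=(g_0,\dots,g_{n-1})\in\mathcal G_0\subset\mu_{d^\infty}^n,
\]
with the $g_0$-components of unbounded order. Let $\mathcal G$ be the Zariski closure in $\bG_m^n$ of the set of tuples $g$ for which the relation holds for all $w$ near $w_0$. This $\mathcal G$ is an algebraic set of positive dimension. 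Choose a one-dimensional irreducible component $C'$, and project it to a pair of coordinates $(g_0,g_i)$, or to $g_0$ together with $w$-dependent data. Specialize $w$ to points where $\Psi(g_0w)$ hits a fixed value $c_1$ with $|c_1|\neq 1$; such points arise from the preimage structure $z^{d^{a}}=c_1$ coming from Equation~\eqref{eqn:psi-z-dn}. Lemma~\ref{lem:many-preimages-then-special} then shows that each relevant curve is either a coordinate fiber or a torsion coset $\{y=\zeta x^{i/j}\}$.

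In the fiber case, the $g_0$-dependence can be eliminated by differentiating or specializing $g_0$. This yields a relation $Q_1(w,\Psi(h_{s+1}(w)),\dots,\Psi(h_{n-1}(w)))=0$ in fewer $\Psi$-variables. Applying the inductive hypothesis to the projection $\pi_{0,s+1,\dots,n-1}(\CC)$ shows that this projection is contained in a proper $f$-special set. Pulling this back by the coordinate projection gives a proper $f$-special set containing the original image, which is a contradiction.

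In the torsion-coset case, we obtain a relation $Q_2(\Psi(y),\Psi(\tilde h_1(y)),\dots,\Psi(\tilde h_s(y)))=0$, where each $\tilde h_i(y)=\zeta_i y^{e_i}$ up to roots of unity. The branch $(y,\tilde h_i(y))$ is then a branch of an algebraic torus coset, and its image under $\Psi$ is, by the computation in Proposition~\ref{prop:f-special-gives-bialg} and Lemma~\ref{lem:commutation-of-L-and-h}, essentially an $F$-invariant graph. It remains to show that the Zariski closure of $(\Psi(y),\Psi(\zeta_iy^{e_i}))$ is $F_{s+1}$-preperiodic. For this we compare degrees: when $e_i=d^{k}$, Equation~\eqref{eqn:psi-z-dn} gives $\Psi(\zeta y^{d^k})=f^k(\Psi(\zeta' y))$, which is preperiodic. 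We must also rule out exponents $e_i$ that are not powers of $d$ (or rational powers of $d$); again by Becker--Bergweiler-type transcendence, such a relation would make $\Psi(y^{e})$ algebraic over $\Psi(y)$. This produces a proper $f$-special set containing the image, and again a contradiction.

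The main obstacle is the specialization step: turning the infinite family of monodromy-twisted relations into a genuine one-parameter algebraic family $\mathcal G$, and then choosing specialization points so that Lemma~\ref{lem:many-preimages-then-special} applies with $|c_1|,|c_2|\ne1$. To do this we would try to use the minimality of $P$: subtract the twisted relation from the original and divide out to contradict minimal degree unless $\mathcal G$ contains a curve through the identity. After that, we would use the points $E_{n,c}$ themselves as specialization points, since they satisfy $z^{d^{n}}=$ a fixed value.
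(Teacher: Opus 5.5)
Your setup matches the paper's: induction on $n$, dominance of the coordinate projections, reduction via Lemma~\ref{lem:permute+rotate-for-monodromy} to the increasing-order $\Psi$-monodromy property, and continuation of the relation along the $\gamma_j$. The step that does the real work is missing, however, and what you put in its place would not go through.

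\textbf{The specialization step.} The set of rotation tuples $g$ for which the twisted relation holds is only an analytic set near the unit torus. Indeed, $\Psi(g_0w)$ is defined only for $|g_0w|<R$, and the relation is transcendental in $g$. Nothing makes the relation persist on the Zariski closure $\mathcal G\subset\bG_m^n$, so your component $C'$ and its projections carry no information. Nor can Lemma~\ref{lem:many-preimages-then-special} be applied in $g$-space. There the relevant points are roots of unity, while the lemma needs points in division groups of fixed $c_1,c_2$ with $|c_1|,|c_2|\neq 1$.

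In the paper, Laurent's lemma is applied instead to the algebraic curve $\pi_{0,i}(\CC)=\{(z,h_i(z))\}$ at the points $(z_{n_j},h_i(z_{n_j}))$, where $z_{n_j}^{d^{n_j}}=\Phi(f^{n_c}(c))$ is fixed. This is Claim~\ref{claim:h-sends-to-E-comp}: if $(z,h_i(z))$ is not a torsion coset, then $h_i(\gamma_j)$ eventually encircles no point of any $E_m$, so $\xi_{i,j}=1$. That freezes every non-torsion coordinate. Only then does the paper pass to the analytic germ $G$ of admissible rotations at an accumulation point of the tuples $(\zeta_{j,0},\dots,\zeta_{j,k-1})$, and it splits into two cases:
\begin{itemize}
\item some $(g_0w,g_\ell h_\ell(w))$ is generically not a torsion coset; then one reduces the number of rotated coordinates and uses countability of $F_n$-preperiodic varieties together with Baire category;
\item all of them are torsion cosets, so $g_\ell h_\ell(w)=\tilde h_\ell(g_0w)$; then the substitution $u=g_0w$ splits the relation into one in $u$ alone or one in $w$ alone.
\end{itemize}
Your fiber case, where you eliminate $g_0$, works only after the other coordinates are frozen. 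The minimal-degree device substitutes for none of this.

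\textbf{The torsion-coset step.} This step aims at the wrong target. A relation $Q_2$ need not come from pairwise relations, and exponents that are not rational powers of $d$ genuinely occur. For $d=3$, the $\Psi$-image of $(y,y^2,y^6)$ lies in the $F_3$-invariant hypersurface $\{X_3=f(X_2)\}$. That relation comes from the pair $(y^2,y^6)$, not from any relation between $\Psi(y)$ and $\Psi(y^2)$. So you cannot ``rule out'' such $e_i$, and the inference ``relation $\Rightarrow$ $\Psi(y^e)$ algebraic over $\Psi(y)$'' is invalid. Nor does \cite{Bottcher-transcendence} give algebraic independence of $\Psi(y)$ and $\Psi(y^e)$; that would itself be an instance of the statement being proved.

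The paper needs no condition on the exponents. Write the torsion coset as $(t^{a_i},\mu_it^{b_i})$ and choose $N<M$ with $\mu_i^{d^N}=\mu_i^{d^M}$, so the coset is $p_d$-preperiodic. Since $\Psi(z^d)=f(\Psi(z))$, the Zariski closure $W$ of $(\Psi(u),\Psi(\tilde h_1(u)),\dots)$ then satisfies $F^N(W)=F^M(W)$.
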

\begin{proof}
We prove this by induction on $n$. The base case is $n=1$ which is clear since $\Psi(z)$ is a transcendental function by \cite[Theorem 1]{Bottcher-transcendence}. 

Now assume that $n \ge 2$. If any of the projections $\pi_i: \CC \lra \bP^1$ is not dominant, we see that $\CC$ is contained in a fiber of one of the $\pi_i$ which is $f$-special by definition and we are done. So, we may assume that the projection $\pi_i$ are all dominant.

It is clear that if $\sigma \in S_n$ is a permutation and Proposition \ref{prop:DAS-curve} holds for the curve $\CC^\sigma$ obtained by permuting the analytic function $h_0,\dots,h_{n-1}$, then it must also hold for $\CC$. Moreover, by Proposition \ref{prop:rotation-red}, it suffices to prove Proposition \ref{prop:DAS-curve} holds for $(\lambda h_0,h_1,\dots,h_{n-1})$ where $\lambda$ is some root of unity in $\mu_{d^{\infty}}$. Thus, by Lemma \ref{lem:permute+rotate-for-monodromy} we can assume that $\CC$ has the increasing-order $\Psi$-monodromy property. After replacing $w$ with $h_0^{-1}(w)$ we also assume without loss of generality that $h_0(w) = w$ and that $U \subset \D_R$. By the increasing-order $\Psi$-monodromy of $\CC$, we can find loops $\{\gamma_j\}$ satisfying conditions 1,2, and 3 given in Definition \ref{def:increasing-psi-monodromy}. Each $\gamma_j$ is a loop around a point $z_{n_j} \in E_{n_j}$ for some sequence $\{n_j\}_{j \ge 1}$ of positive integers. 

Now, if the functions 
\[
w, \Psi(w), \Psi(h_1(w)), \dots, \Psi(h_{n-1}(w)),
\]
are algebraically independent we are done. So, suppose that there exists an equation $Q \in \C[X_1,\dots,X_{n+1}]$ such that 
\begin{equation}
\label{eqn:alg-rel-2}
Q(w,\Psi(w),\Psi(h_1(w)), \dots,\Psi(h_{n-1}(w))) = 0,
\end{equation}
for all $w \in U$.

By the definition of $\gamma_j$, continuing $h_i(w)$ along $\gamma_j$ results in $h_i$-values along a loop that we will call $\gamma_{j,i}$ in $\D \setminus \U$. By compactness, all of the loops 
\[
\gamma_j,\gamma_{j,1},\dots,\gamma_{j,n-1},
\]
remain in $D_{m_j}$ for some $m_j \in \N$. Therefore, we can use the equation 
\[
\Psi(z^{m_j}) = f^{m_j}(\Psi(z))
\]
to continue $(\Psi(w),\Psi(h_0(w)), \dots, \Psi(h_{n-1(w)}))$ along $\gamma_j$ for all $j \ge 1$. This continuation sends $\Psi(w)$ to $\Psi(\zeta_{n_j}w)$ where $\zeta_{n_j}$ are as defined in Definition \ref{def:increasing-psi-monodromy}. Note that continuing $\Psi(h_i(w))$ along $\gamma_j$ is the same as continuing $\Psi(w)$ along $\gamma_{j,i}$. So, by Proposition \ref{prop:loop-monodromy}, continuing along $\gamma_j$ sends $\Psi(h_i(w))$ to $\Psi(\zeta_{i,j}h_i(w))$ for some root of unity $\zeta_{i,j} \in \mu_{d^{m_j}}$ and all $j \ge 1$ and $1\le  i \le n - 1$.

Suppose without loss of generality that the pairs $(w,h_i(w))$ are not branches of a translate of an algebraic subgroup by a torsion element for all $k \le i \le n-1$ and some $k \ge 1$ ($k \ge n$ implies that there are no such pairs). Recall that each $\gamma_j$ is given by
    \[
    p_j \cdot \ell_j \cdot p_j^{-1},
    \]
where $\ell_j$ is a small loop around some $z_{n_j} \in E_{n_j, c}$ for some $c \in C_{f}$ and $p_j$ is a path from $w_0$ to some $w'_j \in \ell_j$. Let $h_i(z_{n_j})$ denote the value at $z_{n_j}$ of the branch $h_i$ obtained by analytic continuation along $p_j$. (The germ is defined on a neighborhood containing $\ell_j$, and hence also contains $z_{n_j}$.) We can prove the next claim.
\begin{claim}
\label{claim:h-sends-to-E-comp}
At the expense of replacing $\{n_j\}_{j \in \N}$ with $\{n_{j + s}\}_{j \in \N}$ for some $s \ge 0$, we may assume without loss of generality that for all $k \le i \le n - 1$ and all $j \ge 1$, $h_i(z_{n_j}) \notin E_m$ for any $m \ge 1$.
\end{claim}
\begin{proof}
If $k \ge n$ there is nothing to show. So, assume $k \le n-1$. Suppose that the claim does not hold. Then, for every $j \ge 1$ there is $k \le i \le n-1$ such that $h_i(z_{n_j}) \in E_m$ for some $m \ge 1$. By the pigeonhole principle, we may assume that $i$ is fixed after replacing $\{n_j\}$ with a subsequence. By replacing with another subsequence we may also assume that $h_i(z_{n_j}) \in E_{m,c_1}$ for some $c_1 \in C_{f}$. It then follows from Lemma \ref{lem:many-preimages-then-special} that the algebraic curve defined by $(z,h_i(z))$ is a translate of an algebraic subgroup of $\bG_m^2$ by a torsion element. This contradicts the assumption on the pair $(z,h_i(z))$ and finishes the proof. 
\end{proof}

As a result of Claim \ref{claim:h-sends-to-E-comp} we see that the loops $\gamma_{i,j}$ do not go around any points in $E_m$ for $i \ge k$. Hence, continuing $\Psi$ along $\gamma_{i,j}$ has trivial monodromy and we may assume that 
\[
\zeta_{i,j} = 1 \text{ for all } i \ge k \text{ and } j \ge 1.
\]
Thus, continuing $\Psi(w),\Psi(h_0(w)),\dots,\Psi(h_{n-1}(w))$ along $\gamma_j$ and using Equation \ref{eqn:alg-rel-2} we get
\begin{equation}
\label{eqn:after-continuation}
Q\left(w, \Psi(\zeta_{j,0}w), \Psi(\zeta_{j,1}h_1(w)), \dots, \Psi(\zeta_{j,{k-1}}h_{k-1}(w)), \Psi(h_{k}(w)),\dots,\Psi(h_{n-1}(w))\right) = 0,
\end{equation}
for all $j \ge 1$ where we write $\zeta_{j,0}$ in place of $\zeta_{n_j}$ to keep the notation consistent. 

Suppose $k = 1$. Since the order of $\zeta_{j,0}$ is at least $2^{n_j}$, we have infinitely many different values of $\zeta_{n_j}$. We conclude that $Q$ has no dependence on the second variable. So,  
\[
Q\left(w, \Psi(h_1(w)), \dots, \Psi(h_{n-1}(w))\right) = 0,
\]
for all $w \in U$. Note that $w$ and $h_1(w)$ are algebraically dependent. So, there is another polynomial $Q_1 \in \C[X_1,\dots,X_n]$ such that 
\[
Q_1\left(h_1(w), \Psi(h_1(w)), \dots, \Psi(h_{n-1}(w))\right) = 0
\]
By the inductive hypothesis we conclude that $(\Psi(h_1(w)), \dots,\Psi(h_{k-1}(w)))$ is contained in a proper $f$-special subvariety of $\C^n$, say $W$. Thus, $(\Psi(w), \Psi(h_1(w)), \dots,\Psi(h_{n-1}(w)))$ is contained in $\pi_1^{-1}(W)$ which is also $f$-special and we are done. 

Now, suppose that $k \ge 2$. The set 
\begin{align}
\mathcal{S} := \{(s_0,\dots,s_{k-1})&: Q\left(w, \Psi(s_0w), \dots, \Psi(s_{k-1}h_{k-1}(w)), \Psi(h_{k}(w)),\dots,\Psi(h_{n-1}(w))\right) = 0, \notag \\
&\text{ for all } w \in U\}, \notag
\end{align}
is an analytic set. We already know that it contains $(\zeta_{j,0}, \dots,\zeta_{j,k-1})$ for all $j \ge 1$. Hence, $\dim(\mathcal{S}) \ge 1$. 

Let $\lambda := (\lambda_0,\dots,\lambda_{k-1}) \in (\Ss^1)^k$ be an accumulation point of the points $(\zeta_{j,0}, \dots,\zeta_{j,k-1})$ and let $G$ be the analytic germ of $\mathcal{S}$ near $\lambda$. After replacing with a suitable subsequence of tuples $(\zeta_{j,0}, \dots,\zeta_{j,k-1})$, we can assume that $G$ is an irreducible germ and contains all the points $(\zeta_{j,0}, \dots,\zeta_{j,k-1})$. Using equation \eqref{eqn:after-continuation} we get
\begin{equation}
\label{eqn:after-closure}
Q\left(w, \Psi(g_0w), \Psi(g_1h_1(w)), \dots, \Psi(g_{k-1}h_{k-1}(w)), \Psi(h_{k}(w)),\dots,\Psi(h_{n-1}(w))\right) = 0,
\end{equation}
for every $(g_0,\dots,g_{k-1}) \in G \cap N$ where $N$ is a sufficiently small neighborhood of $\lambda$ and all $w \in U$. 

Suppose that there exists $1 \le \ell \le k - 1$ such that for all but countably many $(g_0,\dots,g_{k-1}) \in G \cap N$, we can guarantee that none of the branches
\[
(g_0w, g_\ell h_{\ell}(w)), (g_0w, h_{k}(w)), \dots, (g_0w, h_{n-1}(w)),
\]
 is a torsion coset of an algebraic subgroup of $\bG_m^2$. Then, we can reduce $k$ by 1 and conclude inductively that 
\[
\left(\Psi(g_0w), \Psi(g_1h_1(w)), \dots, \Psi(g_{k-1}h_{k-1}(w)), \Psi(h_{k}(w)),\dots,\Psi(h_{n-1}(w)) \right)
\]
is contained in an $f$-special subvariety $V_{g_0,\dots,g_{k-1}}$. It is clear by our assumption on $h_1,\dots,h_{n-1}$ that $V_{g_0,\dots,g_{k-1}}$ cannot be contained in a fiber of any of the projections $\pi_i$. So, $V_{g_0,\dots,g_{k-1}}$ is an $F_{n}$-preperiodic subvariety of $\C^n$. There are countably many such varieties by \cite[Theorem 6.24]{Scanlon}. So, by the Baire category theorem, there must exist an $F_n$-preperiodic subvariety, say $T$, such that $T = V_{g_0,\dots,g_{k-1}}$ for all $(g_0,\dots,g_{k-1})$ in an analytically dense subset of $N \cap G$. It follows that 
\[
\Psi(g_1h_1(w)), \dots, \Psi(g_{k-1}h_{k-1}(w)), \Psi(h_{k}(w)),\dots,\Psi(h_{n-1}(w))
\]
is contained in $T$ for all $(g_0,\dots,g_{k-1}) \in N \cap G$. In particular, the branch $C_1$ defined by
\begin{equation}
\label{eqn:branch-2}
\left(\Psi(\zeta_{1,0}w),\Psi(\zeta_{1,1}h_1(w)), \dots, \Psi(\zeta_{1,k-1}h_{k-1}(w)), \Psi(h_{k}(w)),\dots,\Psi(h_{n-1}(w))\right),
\end{equation}
is contained in $T$. In other words, Proposition \ref{prop:DAS-curve} is vacuously true for the curve 
\begin{align}
( \zeta_{1,0}w &,\zeta_{1,1}h_1(w), \dots, \zeta_{1,k-1}h_{k-1}(w),h_{k}(w),\dots,h_{n-1}(w), \dots \notag \\
&,\Psi(\zeta_{1,0}w),\Psi(\zeta_{1,1}h_1(w)), \dots, \Psi(\zeta_{1,k-1}h_{k-1}(w)), \Psi(h_{k}(w)),\dots,\Psi(h_{n-1}(w)))
\end{align}
We can then conclude by Proposition \ref{prop:rotation-red} that the Proposition must also hold for the original branch 
\[
(w,h_1(w),\dots,h_{n-1}(w),\Psi(w),\Psi(h_1(w)),\dots,\Psi(h_{n-1}(w))).
\]

Finally, suppose that we cannot guarantee the existence of some $1 \le \ell \le k - 1$ such that for all but countably many $(g_0,\dots,g_{k-1}) \in G \cap N$, none of the branches
\[
(g_0w, g_\ell h_{\ell}(w)), (g_0w, h_{k}(w)), \dots, (g_0w, h_{n-1}(w)),
\]
 is a torsion coset of an algebraic subgroup of $\bG_m^2$. Note that there are only countably many values of $g_0$ that can result in one of $(g_0w,h_i(w))$ being a torsion coset of an algeraic subgroup for some $k\le i \le n-1$. We conclude that for each $1\le \ell \le k - 1$ and all but countably many $(g_0,\dots,g_{k-1}) \in G \cap N$, the branch $(g_0w, g_\ell h_{\ell}(w))$ is a torsion coset of an algebraic subgroup. Again, note that there are countably many such subtori. So, $(g_0w, g_\ell h_\ell(w))$ is a branch of a fixed algebraic curve given by $(g_0w, \tilde{h}_\ell(g_0w))$ for all $1 \le l \le k - 1$. Equation \eqref{eqn:after-closure} then becomes
\[
Q\left(w, \Psi(g_0w), \Psi(\tilde{h}_1(g_0w)), \dots, \Psi(\tilde{h}_{k-1}(g_0w)), \Psi(h_{k}(w)),\dots,\Psi(h_{n-1}(w))\right) = 0,
\]
for all $w \in U$ and all $g_0$ in some small neighborhood of $\lambda_0$. We can then change coordinates by $u = g_0w$ and get 
\[
Q\left(w, \Psi(u), \Psi(\tilde{h}_1(u)), \dots, \Psi(\tilde{h}_{k-1}(u)), \Psi(h_{k}(w)),\dots,\Psi(h_{n-1}(w))\right) = 0,
\]
for all $w \in U$ and all $u$ in a small neighborhood containing $g_0U$. Since $u$ and $w$ are algebraically independent variables we either must have an algebraic relation 
\begin{equation}
\label{eqn:u-rel}
Q_1(\Psi(u), \Psi(\tilde{h}_1(u)), \dots, \Psi(\tilde{h}_{k-1}(u))) = 0,
\end{equation}
or 
\begin{equation}
\label{eqn:w-rel}
Q_2\left(w, \Psi(h_{k}(w)),\dots,\Psi(h_{n-1}(w))\right) = 0.
\end{equation}
In the latter case, since $h_k(w)$ is algebraically dependent with respect to $w$ we conclude that 
\[
Q_3\left(h_k(w), \Psi(h_{k}(w)),\dots,\Psi(h_{n-1}(w))\right) = 0
\]
for some $Q_3 \in \C[x_1,\dots,x_{n-k + 1}]^\ast$. We conclude the proof by the inductive hypothesis. 

Finally, suppose equation \eqref{eqn:u-rel} holds. Let $W$ be the Zariski closure of the branch
\[
(\Psi(u), \Psi(\tilde{h}_1(u)), \dots, \Psi(\tilde{h}_{k-1}(u))),
\]
in $\C^k$ which must be irreducible. Since $(u,\tilde{h}_i(u))$ is a branch of a coset of a subtori by a torsion element there must exist roots of unity $\mu_i$ and integers $a_i,b_i \in \Z$ such that $(z,\tilde{h}_i(z))$ is parametrized as $(t^{a_i},\mu_it^{b_i})$ for $t \in \C$. We can choose integers $0 \le N < M$ such that $\mu_i^{d^N} = \mu_i^{d^M}$ for all $1 \le i \le k - 1$. This means that the branches 
\[
(u^{d^N},\tilde{h}_i(u)^{d^N}) = (t^{a_id^N}, \mu_i^{d^N}t^{b_id^N}), \text{ and } (u^{d^M},\tilde{h}_i(u)^{d^M}) = (t^{a_id^M}, \mu_i^{d^N}t^{b_id^M})
\]
define the same algebraic curve for all $1 \le i \le k - 1$. It follows that $$(F_n)^N(\Psi(u), \Psi(\tilde{h}_1(u)), \dots, \Psi(\tilde{h}_{k-1}(u))) \text{ and } F_n^M(\Psi(u), \Psi(\tilde{h}_1(u)), \dots, \Psi(\tilde{h}_{k-1}(u))) $$ must define the same analytic branches. So, these two branches have the same Zariski closure. On the other hand, the Zariski closure of these branches are equal to $F_n^N(W)$ and $F_n^M(W)$, respectively. So, $W$ must be preperiodic and we are done. 
\end{proof}

The proof of Theorem~\ref{thm:DAS-totally-disconnected}, in the case where
$h_0,\dots,h_{n-1}$ parametrize a branch of an algebraic variety of dimension
at least $2$, relies on a slicing argument that reduces the dimension and
allows us to proceed by induction. A crucial point in the argument is to show
that a generic slice still maps dominantly onto each coordinate. 
\begin{lemma}
\label{lem:dim-red-via-fibration}
Suppose $n \ge 3$, $1 \le i \le n$, and $X$ is an irreducible subvariety of $(\bP^1)^n$ of dimension at least $2$ that is not contained in finitely many fibers of the projections $\pi_i$ for $1 \le i \le n$. Then, there exists a dominant rational fibration $g: (\bP^1)^n \dra \bP^1$ such that for all but finitely many $a \in \bP^1(\C)$ the restriction of $\pi_i$ to $X \cap g^{-1}(a)$ is dominant. 
\end{lemma}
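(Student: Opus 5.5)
We will take $g$ to be a projection onto one coordinate $j\neq i$, or a generic rational function of two coordinates, chosen so that the fibers of $g|_X$ are the irreducible components of hyperplane-type sections of $X$. The hypothesis that $X$ is not contained in finitely many fibers of each $\pi_k$ means precisely that every coordinate projection $\pi_k|_X\colon X\to\bP^1$ is dominant, since a non-dominant map to $\bP^1$ from an irreducible variety is constant. The goal is to choose $g$ so that $\pi_i$ remains non-constant on a general fiber $X_a:=X\cap g^{-1}(a)$. Each component of such a fiber has dimension at least $1$, and its image under $\pi_i$ is then either a point or all of $\bP^1$. So it suffices to rule out that $\pi_i$ is constant on the generic fiber.

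\textbf{Step 1.} Consider the map $\Pi=(\pi_i,g)\colon X\dra \bP^1\times\bP^1$. The map $\pi_i$ is constant on a general fiber of $g|_X$ exactly when $\pi_i$ factors rationally through $g$ on $X$, that is, when the image of $\Pi$ is a curve rather than all of $\bP^1\times\bP^1$. Since $\dim X\ge 2$, we need a $g$ for which $\Pi$ is dominant, or equivalently for which $\pi_i|_X$ and $g|_X$ are algebraically independent in $\C(X)$.

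\textbf{Step 2.} We produce this $g$. The function field $\C(X)$ has transcendence degree at least $2$ and is generated by the coordinate functions $x_1,\dots,x_n$. If every $x_k$ were algebraic over $\C(x_i)$, then $\C(X)$ would have transcendence degree $1$, a contradiction. Hence some $x_j$ with $j\ne i$ is algebraically independent from $x_i$, and we take $g=\pi_j$. A general fiber $X\cap\{x_j=a\}$ then has pure dimension $\dim X-1\ge 1$, and $\pi_i$ is non-constant on it. For the restriction to be \emph{dominant} on the whole fiber, including all of its components, we may need to pass to a suitable component or use a generic linear combination instead. In that case we take $g=x_j+t x_k$ for generic $t$, or alternatively restrict attention to a component, if the application only requires a branch.

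\textbf{Step 3.} We then handle the phrase ``for all but finitely many $a$.'' The locus of $a$ for which the fiber is degenerate, or for which some component of $X_a$ maps to a point under $\pi_i$, is constructible. Since the generic fiber is good by Step 1, this bad locus is a proper constructible subset of $\bP^1$, hence finite.

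The main obstacle is ensuring dominance on \emph{every} component of the fiber, since a reducible fiber could contain a component on which $\pi_i$ is constant. To handle this, we will consider the Stein factorization of $\Pi$. Because $\Pi$ is dominant onto the surface $\bP^1\times\bP^1$, any component of a general fiber of $g$ on which $\pi_i$ is constant would be a component of a fiber of $\Pi$. Such components have dimension $\dim X-2$, while components of fibers of $g$ have dimension $\dim X-1$, so for general $a$ no such component occurs. This dimension count is the key point. The possibly nonstandard wording ``for $1\le i\le n$'' in the hypothesis is used only to guarantee that $x_i$ is nonconstant on $X$, together with the existence of the independent coordinate $x_j$. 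If needed, the same argument with a generic $g$ works simultaneously for all $i$, because each bad condition on $g$ is a proper Zariski-closed condition on the parameter $t$.
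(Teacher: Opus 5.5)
As literally stated, with $i$ fixed, your argument is correct, and it works the same way as the paper's. Because $\trdeg_\C \C(X)\ge 2$ and the $x_k$ generate $\C(X)$, there is a function algebraically independent of $x_i$. So $(x_i,g)\colon X\dra\bP^1\times\bP^1$ is dominant, and all but finitely many fibers of $g$ dominate the $x_i$-line.

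The gap is uniformity in $i$. The paper's proof produces a \emph{single} $g$ that works for every $i$ at once: it concludes that for $a\in\bigcap_i Y_i$ the slice projects dominantly to every coordinate. The lemma is used exactly this way in the proof of Theorem~\ref{thm:DAS-totally-disconnected}, where every $\pi_i$ must be dominant on the same slices $Z_{disk}\cap g^{-1}(a)$. Your main choice $g=\pi_j$ cannot do this. Since $\pi_j$ is constant on the fibers of $\pi_j$, it fails for the index $j$ itself.

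Your closing remark, that a generic $g$ works for all $i$ simultaneously, is the right repair. But its one substantive claim is left unproved: that for each $i$ the bad parameters form a \emph{proper} closed set. That claim is exactly what the paper proves. Let $L_i$ be the algebraic closure of $\C(x_i)$ in $\C(X)$, and let $V=\operatorname{span}_\C(x_1,\dots,x_n)$.
\begin{itemize}
\item $V\cap L_i$ is a linear subspace, since $L_i$ is a field.
\item It is proper, because otherwise $\trdeg_\C\C(X)=1$.
\item Hence any $g\in V\setminus\bigcup_i (V\cap L_i)$ is transcendental over every $\C(x_i)$.
\end{itemize}
For your two-term version $x_j+tx_k$ this properness can fail unless $x_j$ and $x_k$ are algebraically independent. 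If $x_k$ is algebraic over $\C(x_j)$, then $x_j+tx_k\in L_j$ for every $t$, so no $t$ works for $i=j$. With $x_j,x_k$ independent, each $L_i$ has transcendence degree one and so contains at most one of them. Then each $i$ excludes at most one value of $t$, and a generic $t$ works.

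Finally, the statement and the paper only require $\pi_i$ to be dominant on the whole slice, so your all-components paragraph is extra. If you keep it, the dimension count as written is incomplete: the point $(b,a)$ need not be a general point of $\bP^1\times\bP^1$. You also need that the locus where $\Pi$ has fibers of dimension $\dim X-1$ is a proper closed subset of $X$. By the fiber-dimension inequality its image under $\Pi$ is then finite, so it affects only finitely many $a$.
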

\begin{proof}
Note that since $X$ is not contained in a finite union of fibers we must have that the intersection of $X$ with $\C^n \subset (\bP^1)^n$ is Zariski dense in $X$. From now, we abuse notation and refer to $X \cap \C^n$ as $X$ as well.   We will show that we can take $g$ to be a fibration $g_{a_1,\dots,a_n}:\C^n \lra \C$  of the form
\[
(x_1,\dots,x_n) \mapsto a_1x_1 + \cdots + a_nx_n,
\]
for $(x_1,\dots,x_n) \in \C^n$.

Let \(K=\mathbb C(X)\) be the function field of $X$. Write \(x_i\in K\) for the coordinate functions. Since
\(X\subset \mathbb C^n\) is irreducible of dimension at least $2$, we must have $\trdeg_\C(K) \ge 2$. Moreover, the assumption that \(X\) projects dominantly to every coordinate
means that each \(x_i\) is nonconstant.

For each \(i\), let
\[
L_i=\overline{\mathbb C(x_i)}\cap K.
\]
Consider the
finite-dimensional \(\mathbb C\)-vector space
\[
V=\operatorname{span}_{\mathbb C}\{x_1,\dots,x_n\}\subset K.
\]
For fixed \(i\), define $B_i = V \cap L_i$.
Each $B_i$ is a $\C$-subspace of $V$ satisfying \(B_i\subsetneq V\). Indeed, if \(B_i=V\), then every coordinate
function \(x_j\) would be algebraic over \(\mathbb C(x_i)\). Since the
coordinate functions \(x_1,\dots,x_n\) generate \(K\) as a function field, this
would imply that \(K\) is algebraic over \(\mathbb C(x_i)\). Hence, $\trdeg_\C K = 1$, which contradicts that $X$ has dimension at least 2.

Thus, each \(B_i\) is a proper \(\mathbb C\)-linear subspace of \(V\). Therefore, $\bigcup_{i=1}^n B_i$ does not cover $V$ and we may choose
\[
g=a_1x_1+\cdots+a_nx_n\in V\setminus \bigcup_{i=1}^n B_i.
\]
Then for every \(i\), we have \(g\notin L_i\). Equivalently, \(g\) is
transcendental over \(\mathbb C(x_i)\). Using the fact that $x_i$ is non-constant, we conclude that $g$ and $x_i$ are algebraically
independent over $\mathbb C$. Hence, the morphism
\[
(g,x_i):X\to \mathbb A^2
\]
is dominant for every \(i\). Hence, for every $i$, there is a Zariski open subset $Y_i \subset \C$ such that for every $a \in Y_i$, $X \cap g^{-1}(a)$ maps dominantly to the $x_i$ coordinate. It follows that for all 
\[
a \in \bigcap_{i=1}^n Y_i,
\]
$X \cap g^{-1}(a)$ projects dominantly to every coordinate.
\end{proof}

Finally we prove Theorem \ref{thm:DAS-totally-disconnected} in the case where $(h_0,\dots,h_{n-1})$ is a branch of an arbitrary algebraic subvariety of $\C^n$.
\begin{proof}[Proof of Theorem \ref{thm:DAS-totally-disconnected}]
We prove this by induction on $m$. The base case is $m = 1$ which is done by Proposition \ref{prop:DAS-curve}.

Suppose that the theorem is shown for $r \le m$. Now let $r = m + 1$ and suppose that $V$ is the algebraic closure of $Z$ in $\C^n \times \C^n$. By the hypothesis, we know that $\pi_{1,\dots,n}(Z)$ is a branch of an irreducible algebraic subvariety $Z_{disk}$ of $\C^n$.

If $Z_{disk}$ is contained in a fiber of a projection $\pi_i$ for some $1 \le i \le n$, then $\pi_{n+1,\dots,2n}(Z)$ is also contained in a fiber of $\pi_i$ which is a proper $f$-special subvariety of $\C^n$ and we are done. So, assume this is not the case. Then, by Lemma \ref{lem:dim-red-via-fibration} we can choose a fibration $g: \C^n \lra \C$ such that each $\pi_i$ restricted to $Z_{disk} \cap g^{-1}(a)$ is dominant for all but finitely many $a \in \C$. Let $G: \C^n \times \C^n \lra \C$ be defined by $G = g\circ \pi_{1,\dots,n}$.

Let $D$ be as in the statement of Conjecture \ref{conj:dyn-ax-schanuel} and let $\iota: D\lra \D_R^n$ be the map
\[
(x_1,\dots,x_r) \mapsto (h_0(x_1,\dots,x_r), \dots, h_{n-1}(x_1,\dots,x_r)),
\]
locally parameterizing $Z_{disk}$. We let $V' := g(\iota(V))$ which is a non-empty open subset of $\C$. 

For any given $a \in \C$ and any set $W \subset \D_R^n \times B_\infty(f)^n $, let $W_a$ denote the intersection $W \cap G^{-1}(a)$. By our assumption on $g$ we must have that 
\begin{equation}
\label{eqn:dim-fiber-Z}
\dim(Z_a) = \dim(Z) - 1,
\end{equation}
for all but finitely many $a \in V'$. Now, there are two possible cases that we will handle separately.
\medskip

\textbf{Case 1.} For all but finitely many $a\in V'$, at least one $(r - 1)$-dimensional branch of the analytic set $\pi_{n+1,\dots,2n}(Z_a)$ is contained in an $f$-special subvariety of $(\bP^1)^n$.

By our assumption on $g$, there are only finitely many $a\in\C$ such that some branch of $\pi_{n+1,\dots,2n}(Z_a)$ is contained in a fiber of a coordinate projection $\pi_i$ for some $1\le i\le n$. Discarding these finitely many points, we may therefore assume that for all but finitely many $a\in V'$, the set $\pi_{n+1,\dots,2n}(Z_a)$ is not contained in any such fiber. Hence, for all but finitely many $a\in V'$, there exists an $F_n$-preperiodic subvariety of $\C^n$ containing a branch of $\pi_B(Z_a)$.

By \cite[Theorem~6.24]{Scanlon}, there are only countably many $F_n$-preperiodic hypersurfaces of $\C^n$. Since $V'$ is uncountable, the pigeonhole principle implies that there exists an $F_n$-preperiodic hypersurface $T\subset \C^n$ and an uncountable subset $V''\subset V'$ such that, for every $a\in V''$, some $(r-1)$-dimensional branch of $\pi_{n+1,\dots,2n}(Z_a)$ is contained in $T$. This shows that $\pi_{n+1,\dots,2n}(Z)$ must also be contained in $T$ and we are done.

\medskip
\textbf{Case 2.} For infinitely many $a\in V'$, no $(r-1)$-dimensional branch of $\pi_{n+1,\dots,2n}(Z_a)$ is contained in an $f$-special subvariety of $\C^n$.

In this case, the inductive hypothesis together with \eqref{eqn:dim-fiber-Z} yields, for infinitely many $a\in\C$,
\[
\dim(V_a)\ \ge\ \dim(Z^{\zar}_a)\ \ge\ \dim(Z_a)+n
 = n + r - 1.
\]
In particular, the generic fiber of $G|_V$ has dimension at least $r+n-1$. Hence
\[
\dim(V) \ge \bigl(\dim(Z)+n-1\bigr)+1\\ = r+n,
\]
and we are done.
\end{proof}

\section{Proof of Theorem \ref{thm:classification-of-bialgebraic}}
\label{sec:pf-of-bialg-classification}

In this section we prove Theorem \ref{thm:classification-of-bialgebraic}, which classifies bialgebraic sets under the assumption that $J_f$ is either disconnected or has a non-degenerate locally connected model. We break the proof into a series of Propositions. The first Proposition handles the case of bialgebraic curves. 

\begin{proposition}
\label{prop:bialg-curve-case}
Theorem \ref{thm:classification-of-bialgebraic} holds when $n=2$ and $V_1$ is an irreducible curve. 
\end{proposition}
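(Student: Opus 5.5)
We split according to the two hypotheses of Theorem~\ref{thm:classification-of-bialgebraic}. If $J_f$ is disconnected, the statement follows from Corollary~\ref{cor:DALW-disc}. The Zariski closure $V_2$ of $\bpsi_{f,2}(V_1')$ is $f$-special, and bialgebraicity forces $\dim V_2=1$. So we may assume $J_f$ is connected with a non-degenerate locally connected model, whence $R=1$ and $\Psi=\Psi_f\colon\D\to B_\infty(f)$. Let $Q(\Psi(x),\Psi(h(x)))=0$ hold on a disk $U\subset\D$, where $(x,h(x))$ parametrizes $V_1'$ and $Q$ defines $V_2$. If either projection of $V_1$ is constant, then $V_2$ is a fiber and we are done. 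Otherwise $h$ is a nonconstant algebraic function, and we continue it analytically along paths in $\D$ avoiding its finitely many branch points.

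\textbf{Step 1: choosing $w_0$.} Starting from $U$, we move to the first time a continuation reaches $\partial\D$, in the spirit of the minimality argument in Lemma~\ref{lem:permute+rotate-for-monodromy}. We aim for a point $w_0\in\partial\D$ and a neighborhood $U_0$ with $h(U_0\cap\D)\subset\D$, and with either $h(U_0)\subset\D$ or $h$ mapping $U_0\cap\partial\D$ into $\partial\D$ and the outside to the outside. Since the branch points and their images are finite, we can choose $w_0$ freely among uncountably many boundary points. By Lemma~\ref{lem:model-types} we can therefore take $w_0$ to be a Jordan point, an interval point, or a $J_\sim$-endpoint according to the type of $J_\sim$, avoiding $\iota$-images of the countably many exceptional points.

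\textbf{Step 2: the case $h(U_0)\subset\D$.} Then $\Psi(h(x))$ is holomorphic across $\partial\D$ near $w_0$. Solving $Q(\Psi(x),\Psi(h(x)))=0$ for $\Psi(x)$ off a finite set gives an algebraic continuation of $\Psi$ across an arc of $\partial\D$. Its image would then locally exhibit $J_f$ as an analytic arc, and the extended function would be real-analytic. For a non-exceptional $f$ this contradicts Fatou's theorem that $J_f$ contains no smooth arc.

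\textbf{Step 3: the case $h$ preserves $\partial\D$ near $w_0$ (the main obstacle).} Set $H:=\Psi\circ h\circ\Phi$ on $\Psi(U_0\cap\D)$. Using $Q(z,H(z))=0$, $H$ is a branch of an algebraic function. We continue it over a neighborhood $D_{i,y}$ of $\imp(w_0)$, with $y=\iota(w_0)$, via Construction~\ref{cons:base-at-fibers}. For this, $i$ must be large enough that $D_{i,y}\cap B_\infty(f)\subset\Psi(U_0\cap\D)$ and that $D_{i,y}$ avoids the finitely many branch points of $\{Q=0\}$. The latter needs $\imp(w_0)$ to avoid those points, which the choice of $w_0$ allows. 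Propositions~\ref{prop:D_i-is-base}, \ref{prop:D_i-cap-b_infty-connected} and \ref{prop:D_i-cap-B_infty-2-conn-comps} show that $D_{i,y}$ is simply connected, so the continuation is single-valued, and that every point of $D_{i,y}\cap J_f$ is a limit from $D_{i,y}\cap B_\infty(f)$. For interval points, Corollary~\ref{cor:imps-are-equal} shows the limits come from both sides. By continuity, and since $h$ sends $\partial\D$ to $\partial\D$ and $\Psi$ of boundary approaches lies in $J_f$, we get $H(D_{i,y}\cap J_f)\subset J_f$. Lemma~\ref{lem:h-respects-lamination} gives $\imp(h(w_0))=H(\imp(w_0))$ and shows that $h$ respects $\sim_f$ locally. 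It follows that $h(w_0)$ has the same type as $w_0$: an endpoint goes to an endpoint because singleton classes accumulated by leaves are preserved. We then repeat the argument for $h^{-1}$ near $h(w_0)$ to get $H^{-1}(J_f)\cap N\subset J_f$. Hence $H$ is a local symmetry of $J_f$ on a neighborhood $N$ of $\imp(w_0)$. The delicate points are ensuring $H$ is univalent on a small $N$, so that $h^{-1}$ is available, and controlling biaccessible points; both are handled by shrinking $i$ and by the neighborhood bases.

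\textbf{Step 4: conclusion.} Proposition~\ref{prop:alg-symmetries} shows that the curve $\{Q=0\}$, locally $(z,H(z))$, is $(f,f)$-preperiodic, so $V_2$ is $f$-special. Finally, $\overline{\bpsi_{f,2}(V_1')}^{\zar}=V_2$ because the image is a one-dimensional branch inside the irreducible curve $V_2$.
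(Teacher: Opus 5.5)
Your overall architecture matches the paper's. You handle the disconnected case via the Ax--Schanuel results. In the connected case you take a first boundary hit $w_0$, split into $h(U_0)\subset\D$ versus $h$ preserving $\partial\D$, build a local symmetry $H$ on $D_{i,y}$, and conclude with Proposition~\ref{prop:alg-symmetries}. However, Step 2 has a genuine gap.

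A holomorphic continuation $\widetilde\Psi$ of $\Psi$ across $U_0\cap\partial\D$ only shows that $J_f$ \emph{contains} the analytic arc $\widetilde\Psi(U_0\cap\partial\D)$. That contradicts nothing: for real $c\in(-2,1/4)$ with $K_c$ of empty interior, $J_c=K_c$ contains the real segment $[-\beta,\beta]$, although $z^2+c$ is non-exceptional. The Fatou-type rigidity you invoke needs $\widetilde\Psi(N)\cap J_f=\widetilde\Psi(N\cap\partial\D)$ for a small neighborhood $N$ of $w_0$. That is, you must rule out Julia points on the far side $\widetilde\Psi(N\setminus\overline{\D})$ that are accessed from elsewhere on the circle. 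This is exactly where the type of $w_0$ must be used, and your Step 2 never uses it; your Step 1 even allows interval points.

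The paper's Claim~\ref{lem:extend-psi} closes this gap as follows.
\begin{itemize}
\item For $w\in U_0\cap\partial\D$, points of $\imp(w)$ solve $Q(\,\cdot\,,\Psi(h(w)))=0$, so these connected impressions are singletons.
\item Kiwi's lemma spreads this to all of $\Ss^1$, so $J_f$ is locally connected.
\item Zdunik's result excludes an interval, so $w_0$ is a Jordan point or an endpoint and $[w_0]=\{w_0\}$.
\item Now suppose $\widetilde\Psi(x_j)\in J_f$ with $x_j\to w_0$ and $x_j\notin\partial\D$. Write $\widetilde\Psi(x_j)=\Psi(\lambda_j)$. Injectivity of $\widetilde\Psi$ forces $\lambda_j\in\partial\D\setminus U_0$.
\item A limit point $\lambda\neq w_0$ of the $\lambda_j$ then gives $\Psi(\lambda)=\Psi(w_0)$, contradicting the singleton class.
\end{itemize}

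There are two smaller issues in Step 3.

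First, Lemma~\ref{lem:h-respects-lamination} only gives $\theta\sim_f\theta'\Rightarrow h(\theta)\sim_f h(\theta')$. This shows $h(w_0)$ is accumulated by leaves, but not that $[h(w_0)]$ is a singleton. The converse would need the lemma for $h^{-1}$, whose hypothesis already requires $h(w_0)$ to be an endpoint. The paper instead re-chooses $w_0$ (Claim~\ref{claim:image-also-endpoint}). The image under $h$ of a small arc around $w_0$ contains uncountably many endpoints. By forward class preservation and injectivity of $h$, each of these can only be the image of an endpoint.

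Second, for an interval point the requirement $D_{i,y}\cap B_\infty(f)\subset\Psi(U_0\cap\D)$ cannot hold, since only one component $D^1_{i,y}$ lies there. What you need is Proposition~\ref{prop:D_i-cap-B_infty-2-conn-comps}, which lets you approximate every Julia point of $D_{i,y}$ from that single component.
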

\begin{proof}
When $J_f$ is disconnected the proposition is a consequence of Theorem \ref{thm:DAS-totally-disconnected} as follows. Suppose that $V_1$ is an $f$-bialgebraic curve. Then, there must an irreducible algebraic curve $V_2 \subset \C^2$ such that $\bpsi_{f,2}(V_1) = (\Psi_f,\Psi_f)(V_1) \subset V_2$. If $V_1$ is contained in a fiber of one of the projections $\pi_1$ and $\pi_2$, then so is $V_2$. So, $V_2$ must be $f$-special and we are done. Hence, we assume that $V_1$ projects dominantly to both coordinates. Let $(x, h(x))$ be some local chart of $V_1$ for some analytic function $h$ on an open subset $U \subset \D_R$ satisfying $h(U) \subset \D_R$. Then, 
\[
(x,h(x), \Psi(x), \Psi(h(x))),
\]
is contained in $V_1 \times V_2$ which has dimension 2 in $\C^4$. Therefore, by Theorem \ref{thm:DAS-totally-disconnected} we must have $(\Psi(x), \Psi(h(x))$ is contained in an $(f,f)$-preperiodic curve. This shows that $V_2$ is $(f,f)$-preperiodic and finishes the proof in the disconnected case. From now, we assume that $J_f$ is connected and has a non-degenerate locally connected model.  

Let $V_1$ and $V_2$ be as before and suppose they are defined by irreducible polynomials $P,Q \in \C[X,Y]$, respectively. Suppose that $(z,h(z))$ is a branch of the curve $V_1$ where $h$ is an analytic function defined on some open subset $U \subset \D$ such that $h(U) \subset \D$. Fix a point $z_0 \in U$.  Recall that $h$ is non-constant. This allows us to assume that $\frac{\partial P}{\partial X}(w,z)\frac{\partial P}{\partial Y}(w,z)$ does not vanish identically for $(w,z) \in V_1$. Similarly, we can assume that $\frac{\partial Q}{\partial X}(w,z)\frac{\partial Q}{\partial Y}(w,z)$ does not vanish identically for $(w,z) \in V_2$.

Recall the map $\iota = \iota_f$ defined in equation \eqref{eqn:def-of-iota}. The next claim establishes the existence of a suitable boundary point $w_0 \in \partial\mathbb{D}$, which will serve as the target endpoint for the analytic continuation of $h$ starting at $z_0$.
\begin{claim}
\label{claim:w_0-is-nice}
We can choose $w_0 \in \partial \D$ satisfying the following properties: 
\begin{itemize}
    \item[(a)] $\iota(w_0)$ is either a $J_\sim$-endpoint, an interval point, or a Jordan point; and \label{type-of-iota-w0}
    \item [(b)] For any $(w_0,z_0) \in V_1$, we have $\iota(z_0)$ is not a poly-accessible point of $J_\sim$; and \label{second-coord-not-poly-acc}
    \item[(c)] $\frac{\partial P}{\partial X}(w_0,z)\frac{\partial P}{\partial Y}(w_0,z) \ne 0$ for all $(w_0,z) \in V_1$; and \label{P-partial-prod-non-zero}
    \item[(d)] $\frac{\partial Q}{\partial X}(w',z)\frac{\partial Q}{\partial Y}(w',z) \ne 0$ for all $w' \in \imp(w_0)$ and all $(w',z) \in V_2$. \label{Q-partiao-prod-non-zero}
\end{itemize}  
\end{claim}
\begin{proof}
Since $\frac{\partial P}{\partial X}(w,z)\frac{\partial P}{\partial Y}(w,z)$ is not identically zero, it vanishes at finitely many points of $V_1$. Similarly, $\frac{\partial Q}{\partial X}(w,z)\frac{\partial Q}{\partial Y}(w,z)$ is not identically zero for $(w,z) \in V_2$, so it vanishes at finitely many points of $V_2$. Also, note that if $J_\sim$ is an interval, then all but two points are interval points, if $J_\sim$ is a Jordan curve, then all points are Jordan points. If none of these cases occurs, then by Lemma \ref{lem:model-types}, $J_\sim$ has uncoutably many distinct endpoints and there are only countably many poly-accessible points in $J_\sim$ by \cite[Proposition 2.18]{Pommerenke}. So, there are only countably many points $w_0$ that do not satisfy at least one of the properties (a)-(d). Therefore, regardless of the topological type of $J_\sim$, conditions (a)-(d) can be satisfied simultaneously. 
\end{proof}

Let $\mathfrak{M}$ be the finite set of points $x$ such that 
\begin{equation}
\label{eqn:partials}
\frac{\partial P}{\partial X}(x,y)\frac{\partial P}{\partial Y}(x,y) = 0,
\end{equation}
for some $(x,y) \in V_1$. Then, for any path $\gamma:[0,1] \lra \C \setminus \mathfrak{M}$ with $\gamma(0) \in U$, we can use the algebraic equation $P$ to analytically continue $h$ along $\gamma$. 

We have $w_0 \notin \mathfrak{M}$ by our assumption. So, we can choose a simple path $\gamma:[0,1] \lra \C\setminus \mathfrak{M}$ from $z_0$ to $w_0$ with $\gamma((0,1)) \subset \D \setminus \mathfrak{M}$ and continue $h$ along $\gamma$. By \eqref{eqn:partials}, we must have $h'(z) \ne 0$ for all $z \in \gamma$. 

\begin{claim}
\label{lem:extend-psi}
Suppose that $h(\gamma(s)) \in \D$ for all $s \in I$. Then, $J_f$ must be locally connected and $f$ must be an exceptional polynomial.
\end{claim}
\begin{proof}
Let $w_1 := h(w_0)$ for the analytic continuation of $h$ along $\gamma$. Suppose that $h$ is defined on a small ball $U_0$ around $w_0$. Since $h(w_0) \in \D$ we can also assume that $h(U_0) \subset \D$ after making $U_0$ smaller if necessary. We first show that $J_f$ is locally connected. Note that we have 
\begin{equation}
\label{eqn:rel-with-h}
P(\Psi(z), \Psi(h(z))) = 0,
\end{equation}
for all $z \in U_0 \cap \D$. Let $z_0$ be a point in the impression $\imp(w)$ for some $w \in U_0 \cap \partial\D$. Then, by continuity we conclude that $P(z_0, \Psi(h(w))) = 0$. Therefore, $z_0$ can only take finitely many values. Since $\imp(w)$ is connected (\cite[Lemma 2.4(1)]{kiwi-real-lamination}), it follows that the impression $\imp(w)$ is a singleton. Therefore, the impressions of all points in $I_0 = U_0 \cap \partial\D$ are singletons. As a result of \cite[Lemma 2.4(2)]{kiwi-real-lamination}, we conclude that all elements of $\sigma_d^{n}(I_0)$ also have singleton impressions for every $n \in \Z$. We conclude that all points of $\Ss^1$ have singleton impressions as 
\[
\Ss^1 = \bigcup_{n \in \Z}\sigma_d^{n}(I_0).
\]
This finishes the proof of the fact that $J_f$ is locally connected. As a result, $\Psi$ extends continuously to the boundary $\partial \D$. Moreover, by \cite[Proposition 7]{zdunik-cheb}, $J_f$ cannot be an interval since $f$ is not a Chebyshev polynomial up to sign. So, $w_0$ is either an endpoint or a Jordan point.

We now want to use the algebraic relation \eqref{eqn:rel-with-h} to show that we can analytically extend $\Psi$ to some non-empty open ball contained in $U_0$. Note that because of property (d) in Claim \ref{claim:w_0-is-nice}, we can parameterize the curve $V_2$ near the point $(\Psi(w_0), \Psi(w_1))$ as 
\[
(h_2(w), w),
\]
for all $w$ in a small neighborhood $U_1$ of $\Psi(w_1)$. After replacing $U_0$ with a smaller neighborhood if necessary we can assume that $$\Psi(h(U_0)) \subset U_1.$$ Then, the branches $(h_2(w),w)$ and $(\Psi(z), \Psi(h(z)))$ agree whenever $w = \Psi(h(z))$ and $z \in U_0 \cap \overline\D$. Let $\widetilde{\Psi}(z)$ denote the local restriction of $\Psi(z)$ to $U_0 \cap \D$. We can extend the function $\widetilde{\Psi}(z)$ to $U_0$ by letting
\[
\widetilde{\Psi}(z) = h_2(\Psi(h(z))). 
\]
After replacing $U_0$ with a smaller open subset we assume that $\widetilde{\Psi}$ sends $U_0$ biholomorphically onto its image. 

Let 
\[
N_0 \supset N_1 \supset N_2 \supset \cdots,
\]
be a neighborhood base of open balls at $w_0$ with $N_0 = U_0$. Our goal now is to show that \begin{equation}
\label{eqn:local-extension-sends-boundary-to-Jf}
    \widetilde{\Psi}(N_j \cap \partial \D) = \widetilde{\Psi}(N_j) \cap J_f,
\end{equation} for some $j \ge 0$. By continuity we clearly must have $\widetilde{\Psi}(N_0 \cap \partial \D) \subset \widetilde{\Psi}(N_0) \cap J_f$. If the reverse inclusion does not hold for any $j \ge 0$, then there must exist $x_j \in N_j \setminus \partial \D$ such that $y_j := \widetilde{\Psi}(x_j) \in J_f$. Using the fact that $J_f$ is locally connected and $\Psi$ extends continuously to the boundary of $\D$, there is some $\lambda_j \in \partial \D$ such that $y_j = \Psi(\lambda_j)$. In fact, $\lambda_j \in \partial\D \setminus (\partial \D \cap U_0)$. To see this note that if $\lambda_j \in \partial \D \cap U_0$, then we get 
\begin{equation}
\label{eqn:psi-psi-tilde}
y_j = \widetilde{\Psi}(x_j) = \Psi(\lambda_j) = \widetilde{\Psi}(\lambda_j),
\end{equation}
which contradicts the fact that $\widetilde{\Psi}$ is a biholomorphism on $U_0$ since $x_j$ by definition lies outside of $\partial\D$ and thus is not equal to $\lambda_j$. 

Take a subsequence $\{\lambda_{n_i}\}$ of $\{\lambda_i\}$ converging to some $\lambda \in \partial\D \setminus (\partial \D \cap U_0)$. Equation \eqref{eqn:psi-psi-tilde} shows that
\[
\Psi(\lambda) = \lim_{i \to \infty} \Psi(\lambda_{n_i}) = \lim_{i \to\infty}\widetilde{\Psi}(x_{n_i}) = \widetilde{\Psi}(w_0) = \Psi(w_0).
\]
Since $\lambda \ne w_0$, this contradicts the fact that $w_0$ is either an endpoint or a Jordan point and finishes the proof of the reverse inclusion. 

Finally, we get a contradiction since we have shown that equation \eqref{eqn:local-extension-sends-boundary-to-Jf} holds for some $j\ge 0$ and this proves that an open subset of the Julia set is smooth which contradicts Fatou's theorem \cite[pp. 250]{Fatou} (see also \cite{milnor}).  
\end{proof}
Now, if $h(\gamma) \subset \D$ we are done by Lemma \ref{lem:extend-psi}. Suppose this is not the case and let $0 \le t \le 1$ be the smallest $t$ such that $h(\gamma(t)) \in \partial\D$. If $t < 1$, we can switch the first and second coordinates and conclude the proof by Claim \ref{lem:extend-psi} again. So, we assume that $t = 1$ which means that $h(\gamma(t)) \in \D$ for all $t < 1$.

Now, take a small ball $U_0$ around $w_0$ where $h$ is defined and such that $U_0$ is divided by $\partial \D$ into two connected open pieces $U_0^0 = U_0 \cap \D$ and $U_0^1 = U_0 \cap \overline{\D}^c$. We may assume $h(U_0^0) \subset \D$. This is because otherwise we can find a point $w_0' \in U_0^0$ with $h(w_0') \in \partial \D$. Choose a path $\gamma'$ to $w_0'$ obtained by branching from $\gamma$ inside $U_0^0$ and assume without loss of generality that $w_0'$ is the first point of $\gamma'$ whose image under $h$ lies in $\partial \D$. After perturbing $w_0'$ if necessary, we may assume that $h(w_0')$ satisfies properties (a)-(d) of Claim \ref{claim:w_0-is-nice}. So, after switching the first and second coordinates we can conclude the proof by Claim \ref{lem:extend-psi}.

So, we may assume that $h(U_0^0) \subset \D$. Similarly, we can assume $h(U_0 \cap \partial \D) \subset \partial \D$ as otherwise we can conclude the proof using Claim \ref{lem:extend-psi}. As a result of the Schwartz reflection principle we also conclude $h(U_0^1) \subset \overline{\D}^c$. Therefore, we actually have
\begin{equation}
\label{eqn:h-sends-b-to-b}
h(U_0 \cap \partial\D) = h(U_0) \cap \partial\D,
\end{equation}
and 
\[
h(U_0^0) = h(U_0) \cap \D, \text{ and } h(U^{1}_0) = h(U_0) \cap \overline\D^c.
\]
We may also replace $U_0$ with a smaller open subset of assume without loss of generality that $h|_{U_0}$ is a biholomorphism onto its image.
\begin{claim}
\label{claim:image-also-endpoint}
We may assume that $\iota(w_0)$ and $\iota(h(w_0))$ are the same type of $J_\sim$-point. 
\end{claim}
\begin{proof}
If $\iota(w_0)$ is a Jordan point, this is clear since all points are Jordan points. If $\iota(w_0)$ is an interval point, then all but two points of $J_\sim$ are interval points. Hence, there must exist an interval point $w'_0$ near $w_0$ such that its image $h(w'_0)$ is also an interval point. We can replace $w_0$ with $w'_0$ and assume that $w_0$ has the desired property of the claim.

Now suppose $y_0 := \iota(w_0)$ is an endpoint $J_\sim$ and $\Ss^1$ has infinitely many endpoints. Since $w_0$ is an endpoint, we must have $\phat^{-1}(y_0) = \imp(w_0)$. By Proposition \ref{prop:D_i-is-base}, the sets $\imp(w_0) \subset D_{j,y_0}$ constructed in \ref{cons:base-at-fibers} form a simply connected neighborhood base at $\phat^{-1}(y_0) = \imp(w_0)$. Choose $j \ge 1$ large enough so that 
\[
D_{j, y_0} \cap B_\infty(f) \subset \Psi(U_0 \cap \D).
\]
This is acheivable since $w_0$ is a $J_\sim$-endpoint. Moreover, using property \((d)\) of Claim \ref{claim:w_0-is-nice}, we can choose $j \ge 1$ large enough so that 
\[\frac{\partial Q}{\partial X}(w',z)\frac{\partial Q}{\partial Y}(w',z) \ne 0,
\]
for all $w' \in D_{j,y_0}$ and all $(w',z) \in V_2$. As a result, the branch $(w, \tilde h(w))$ given by 
\[
\tilde h(w) = \Psi(h(\Phi(w))),
\]
for $w \in D_{j,y_0} \cap B_\infty(f)$ has a unique extension to $D_{j,y_0}$ since $D_{j,y_0} \cap B_\infty(f)$ is connected by Proposition \ref{prop:D_i-cap-b_infty-connected} and $D_{j,y_0}$ is simply connected by Proposition \ref{prop:D_i-is-base}. 

By making $U_0$ smaller and enlarging $j$ if necessary, we assume without loss of generality that $U_0$ is equal to one the open sets $U_i$ constructed in Lemma ~\ref{lem:h-respects-lamination}. Thus by the same lemma we have:
\begin{itemize}
\item for any $\sim$-class $[x]\subset J_f$ with $x\in D_{j,y_0}$, one has $\tilde h([x])\subset [\tilde h(x)]$; and
\item for any $\theta, \theta'\in U_0\cap \partial\D$ with $\imp(\theta), \imp(\theta') \subset D_{j,y_0}$ we have
\[
\theta \sim_f \theta' \ \Longrightarrow \ h(\theta) \sim_f h(\theta').
\]
\end{itemize}
Since $w_0$ is an endpoint, there exists a sequence of non-degenerate leaves $\overline{t_jt'_j}$ with $t_j,t'_j\in U_0$ converging to $w_0$ such that $w_0$ lies on the shorter arc of $\Ss^1$ joining $t_j$ to $t'_j$ for every $j\ge 1$. Choose $j_0$ sufficiently large so that $\imp(t_{j_0})$ and $\imp(t'_{j_0})$ are contained in $D_{j, y_0}$. It follows that for all $\theta$ on the arc $\overline{t_{j_0}t'_{j_0}}$ we also have 
\begin{equation}
\label{eqn:all-imps-in-D}
\imp(\theta) \subset D_{i,y_0}.
\end{equation}
Moreover, by Lemma \ref{lem:model-types} the image of the arc $\overline{t_{j_0}t'_{j_0}}$ under $h$ contains uncountably many $J_\sim$-endpoints. Any such endpoint must be an image of some point of $\overline{t_{j_0}t'_{j_0}}$. Using the fact that $h|_{U_0}$ is biholomorphism onto its image along with equation \eqref{eqn:all-imps-in-D} and the two inclusion properties above, every $J_\sim$-endpoint on $\overline{h(t_{j_0})h(t'_{j_0})}$ can only arise as the image under $h$ of a $J_\sim$-endpoint in $\overline{t_{j_0}t'_{j_0}}$. Therefore, after replacing $w_0$ by a different $J_\sim$-endpoint in $\overline{t_{j_0}t'_{j_0}}$ if necessary, we may assume that $h(w_0)$ is also a $J_\sim$-endpoint.
\end{proof}

% \newpage
% \begin{proof}
% Let $E$ be the set of $J_f$-endpoints in $S^1$. Then, the Hausdorff dimension of $S^1 \setminus E$ is less than 1 by \cite[Lemma 4.3]{Meerkamp}. It follows that $\dim_{\text{H}}((U_0 \cap \partial\D)\setminus E) < 1$. If all but finitely many $J_f$-endpoints in $U_0 \cap \partial \D$ are mapped by $h$ to some point in $h(U_0 \cap \partial\D)\setminus E$ we conclude using the fact that $h$ is a local biholomorphism satisfying equation \eqref{eqn:h-sends-b-to-b} that 
% \[
% 1 = \dim_{\text{H}}(U_0 \cap \partial \D \cap E) = \dim_{\text{H}}(h(U_0 \cap \partial \D \cap E)) \le \dim_{\text{H}}(h(U_0 \cap \partial\D)\setminus E) < 1,
% \]
% which is absurd. Therefore, there are infinitely many $w \in U_0 \cap \partial \D \cap E$ such that $h(w) \in E$. Any such $w$ automatically satisfies property (a). Since there are infinitely many such points, properties (b) and (c) can also be ensured by some $w \in U_0 \cap \partial \D \cap E$. We can then replace $w_0$ with this $w$ if necessary and conclude the proof.
% \end{proof}

Define $H(w) := \Psi(h(\Phi(w)))$ for all $w \in \Psi(U_0 \cap \D)$. Recall that by the definition of $V_2$ we must have 
\[
Q(\Psi(z), \Psi(h(z))) = 0,
\]
for all $z \in U_0 \cap \D$. Substituting $z = \Phi(w)$, we get 
\begin{equation}
\label{eqn:Q-H-rel}
Q(w, \Psi(h(\Phi(w)))) = Q(w, H(w)) = 0,
\end{equation}
for all $w \in \Psi(U_0 \cap \D)$. 

Let $y_0 := \iota(w_0)$. Shrink $U_0$ if necessary to ensure $\Psi(U_0 \cap \D) \subset D_{i,y_0}$ for all $i \ge 1$. By property (d) of $w_0$ demonstrated in Claim \ref{claim:w_0-is-nice}, there exists a parametrization $(w,\widetilde{H}(w))$ of $V_2$ defined for $w \in D_{i, y_0}$ for some $i \ge 1$ and agreeing with $(w, H(w))$ for all $w \in \Psi(U_0 \cap \D)$. This allows us to extend the function $H(w)$ to an analytic function on $D_{i,y_0}$ letting 
\begin{equation}
\label{eqn:continuation-of-H}
H(w) = \widetilde{H}(w),
\end{equation}
for all $w \in D_{i,y_0}$. 

Our goal is to show that a restricted version of $H$ gives a local symmetry of the Julia set in the sense of Levin \cite{Levin-symm} i.e. 
\begin{equation}
\label{eqn:H-is_sym}
H^{-1}(H(D_{i,y_0}) \cap J_f) = D_{i,y_0} \cap J_f,
\end{equation}
for all $i \ge i_0$ and some $i_0 \ge 1$. We break the proof into two cases: 1) $y_0$ is either an endpoint or a Jordan point, or 2) $y_0$ is an interval point.

\textbf{Case 1.} $y_0$ is an endpoint or a Jordan point. Choose $i_0$ sufficiently large so that $$D_{i,y_0} \cap B_\infty(f) \subset \Psi(U_0 \cap \D)$$ for all $i \ge i_0$. So, 
\[
H(w) = \Psi(h(\Phi(w))),
\]
for all $w \in D_{i,y_0} \cap B_\infty$. It follows from continuity along with equation \eqref{eqn:h-sends-b-to-b} that 
\[
H(D_{i,y_0} \cap J_f) \subset J_f,
\]
for all $i \ge i_0$. So, 
\[
H^{-1}(H(D_{i,y_0}) \cap J_f) \supset D_{i,y_0} \cap J_f,
\]
for all $i \ge i_0$ where we regard $H$ as a map on $D_{i,y_0}$ and take the inverse image with respect to this restriction. This proves the backward inclusion required for showing $H(D_{i,y_0} \cap J_f) = H(D_{i,y_0}) \cap J_f$. To prove the reverse inclusion, we apply the same argument to the inverse of $h$ defined on $h(U_0)$ as follows. 

Let $U_0' := h(U_0)$ and recall that $h$ has a local inverse defined on $U_0'$ which we will denote by $h^{-1}$. Choose $j_0$ sufficiently large so that $D_{j,y_1} \subset H(D_{i_0,y_0})$ for all $j \ge j_0$. This is possible since $H(D_{i_0,y_0})$ is a neighborhood of $\imp(h(w_0))$ by Lemma \ref{lem:h-respects-lamination} and since $D_{j,y_1}$ is a neighborhood base at $\imp(h(w_0)) = \phat^{-1}(y_1)$. Since $y_1 := \iota(h(w_0))$ is also an endpoint or Jordan point by Claim \ref{claim:image-also-endpoint}, we can run the same argument with $(U_0', h^{-1})$ in place of $(U_0, h)$ to get an analytic function $H_{-1}$ defined on $D_{j,y_1}$ satisfying 
\begin{equation}
\label{eqn:h{-1}-def}
H_{-1}(w) = \Psi(h^{-1}(\Phi(w))),
\end{equation}
for all $w \in D_{j,y_1} \cap B_\infty$ and also 
\begin{equation}
\label{eqn:H-1-sends-J-to-J}
H_{-1}(D_{j,y_1} \cap J_f) \subset J_f,
\end{equation}
for all $j \ge j_0$ and some $j_0 \ge 1$. Choosing $i_0$ to be larger if necessary, we can assume that $H(D_{i_0,y_0}) \subset D_{j_0,y_1}$. Thus, using equation \eqref{eqn:h{-1}-def} we see that 
\begin{equation}
\label{eqn:H-1-is-inverse}
H_{-1}(H(w)) = w,
\end{equation}
for all $w \in D_{i_0, y_0}$.
Now suppose that $H(w) \in J_f$ for some $w \in D_{i_0,y_0}$. Then, by equations \eqref{eqn:H-1-sends-J-to-J} and \eqref{eqn:H-1-is-inverse} we must have $w = H_{-1}(H(w)) \in J_f$. This shows that 
\[
H^{-1}(H(D_{i,y_0}) \cap J_f) \subset D_{i,y_0} \cap J_f,
\]
as desired.

\textbf{Case 2.} $y_0$ is an interval point. Choose $i_0$ sufficiently large so that $D_{i,y_0}^s$ is completely contained in $\Psi(U_0) \cap\D$ for all $i \ge i_0$ and some $s \in \{1,2\}$. Assume without loss of generality that $s = 1$. So, 
\begin{equation}
\label{eqn:def-of-H}
H(w) = \Psi(h(\Phi(w))),
\end{equation}
for all $w \in D_{i,y_0}^1$. By continuity along with equation \eqref{eqn:h-sends-b-to-b} we see that $H(x) \in J_f$ whenever $x \in \partial D_{i,y_0}^1\cap J_f$. But, Proposition \ref{prop:D_i-cap-B_infty-2-conn-comps} implies that 
\[
\partial D_{i,y_0}^1\cap J_f = \partial D_{i,y_0}^2 \cap J_f \supset  D_{i,y_0}\cap J_f.
\]
Thus, 
\[
H(D_{i,y_0} \cap J_f) \subset J_f,
\]
which proves the inclusion
\[
H^{-1}(H(D_{i,y_0}) \cap J_f) \supset D_{i,y_0} \cap J_f,
\]
for all $i \ge i_0$. To prove the reverse inclusion we proceed by analyzing $h^{-1}$, similar to the proof of Case 1. 

Let $U_0' := h(U_0)$ and recall that $h$ has a local inverse defined on $U_0'$ which we will denote by $h^{-1}$. Setting $y_1 := \iota(h(w_0))$ and running the same argument with $(U_0', h^{-1})$ in place of $(U_0, h)$, we get an analytic function $H_{-1}$ defined on $D_{j_0,y_1}$ satisfying 
\begin{equation}
\label{eqn:h{-1}-def-interval-case}
H_{-1}(w) = \Psi(h^{-1}(\Phi(w))),
\end{equation}
for all $w \in D^1_{j_0,y_1} \cap B_\infty$ and also 
\begin{equation}
\label{eqn:H-1-sends-J-to-J-interval-case}
H_{-1}(D_{j_0,y_1} \cap J_f) \subset J_f,
\end{equation}
for some $j_0 \ge 1$. 

Note that the open sets $H(D_{i,y_0})$ define a neighborhood base at $H(\phat^{-1}(y_0))$. Also, any point in $\phat^{-1}(y_0)$ can be approximated by a sequence of points in $D_{i,y_0}^1$ by Proposition \ref{prop:D_i-cap-B_infty-2-conn-comps} which shows using \eqref{eqn:def-of-H} that $H(\phat^{-1}(y_0)) \subset \phat^{-1}(y_1)$. So, we can choose $i_0$ to be larger if necessary and assume $H(D_{i_0,y_0}) \subset D_{j_0,y_1}$. 

Thus, using equation \eqref{eqn:h{-1}-def-interval-case} we see that 
\begin{equation}
\label{eqn:H-1-is-inverse-interval-case}
H_{-1}(H(w)) = w,
\end{equation}
for all $w \in D_{i_0, y_0}$ and the desired reverse inclusion follows as in the proof of Case 1.

Therefore, equation \eqref{eqn:H-is_sym} holds for some $i \ge i_0$ and $H$ is a local symmetry in the sense of Levin. Proposition \ref{prop:alg-symmetries} then shows that $(x, H(x))$ must define an $(f,f)$-preperiodic curve which finishes the proof.  
\end{proof}

With Proposition~\ref{prop:bialg-curve-case} in hand, we now turn to the case where $n \ge 3$ and $V_1$ is a hypersurface in $\mathbb{C}^n$.
\begin{proposition}
\label{prop:bialg-hypersurface-case}
Theorem \ref{thm:classification-of-bialgebraic} holds when  $V_1$ is a hypersurface. 
\end{proposition}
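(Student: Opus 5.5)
We argue by induction on $n$. The base case $n=2$ is Proposition~\ref{prop:bialg-curve-case}. In the disconnected case the statement already follows from Corollary~\ref{cor:DALW-disc}, so we may assume $J_f$ is connected with a non-degenerate locally connected model. Let $V_1\subset\C^n$ be an irreducible hypersurface with $f$-bialgebraic branch $V_1'$, and let $V_2=\overline{\bpsi_{f,n}(V_1')}^{\zar}$, which is then an irreducible hypersurface.

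\textbf{Reduction to the dominant case.} If $V_1$ is contained in a fiber of some $\pi_i$, then $V_1=\{x_i=a\}$. Hence $V_2=\{x_i=\Psi(a)\}$, which is $f$-special, and we are done. We may therefore assume every $\pi_i|_{V_1}$ is dominant, and similarly for $V_2$.

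\textbf{Slicing.} For $a\in\D$ consider the slice $V_1^a:=V_1'\cap\{x_1=a\}$ and its projection $\pi_{2,\dots,n}(V_1^a)\subset\D^{n-1}$. This is an $(n-2)$-dimensional algebraic branch. Its image under $\bpsi_{f,n-1}$ lies in $\pi_{2,\dots,n}(V_2\cap\{x_1=\Psi(a)\})$, which has dimension $n-2$ for all but finitely many $a$, because $V_2$ is an irreducible hypersurface not equal to a fiber of $\pi_1$. So for all but finitely many $a$ the projected slice is $f$-bialgebraic in $\D^{n-1}$. By induction, and by the general-dimension form of the statement in dimension $n-1$ (which we also have inductively), the Zariski closure of its image is an irreducible $f$-special subvariety $T_a\subset\C^{n-1}$ of dimension $n-2$.

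\textbf{Reassembly.} $T_a$ is either contained in a fiber $\{x_j=b\}$ or is $F_{n-1}$-preperiodic. By \cite[Theorem~6.24]{Scanlon} there are only countably many preperiodic subvarieties, and fibers are parametrized by a single constant $b=b(a)$. Countability together with the Baire category theorem yields one of two outcomes.

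\emph{(i)} A single preperiodic hypersurface $T$ satisfies $T_a=T$ for uncountably many $a$. Then $V_2\supset\{\Psi(a)\}\times T$ for uncountably many $a$, so $V_2=\C\times T$. This contradicts dominance of $\pi_1$ only if $T$ depends on all coordinates; in fact $V_2=\C\times T$ is itself $f$-special, since $\C$ is trivially preperiodic, so we are done.

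\emph{(ii)} For uncountably many $a$, and for a fixed index $j$, the slice $T_a$ lies in $\{x_j=b(a)\}$. Then $V_2\cap\{x_1=\Psi(a)\}\subset\{x_j=b(a)\}$, so $V_2$ is contained in the curve-cylinder $\pi_{1,j}^{-1}(\overline{\{(\Psi(a),b(a))\}})$. Since $V_2$ is a hypersurface, $V_2=\pi_{1,j}^{-1}(C)$ for an irreducible curve $C\subset\C^2$. Then $\pi_{1,j}(V_1')$ is a bialgebraic curve branch with image in $C$, and Proposition~\ref{prop:bialg-curve-case} makes $C$ $(f,f)$-preperiodic. Hence $V_2=C\times\C^{n-2}$ is $f$-special.

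\textbf{Main obstacle.} The expected main obstacle is case (i) with a mixed structure. The $T_a$ might vary with $a$ through infinitely many distinct preperiodic subvarieties. In that situation we would use the shape of invariant varieties from Lemma~\ref{lem:form-of-invariant}: each $T_a$ is defined by relations of the form $x_k=g(x_l)$ with $g$ taken from a countable set. We would then argue via Baire that a fixed relation $x_k=g(x_l)$ with $k,l\ge2$ holds on a dense set of slices, and hence on $V_2$. This forces $V_2$ to be a cylinder over an $(f,f)$-preperiodic curve in coordinates $(k,l)$, which is again $f$-special. Finally, the equality $\overline{\bpsi_{f,n}(V_1')}^{\zar}=V_2$ holds by dimension count.
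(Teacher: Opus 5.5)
Your strategy works and differs from the paper's, but step (ii) uses an inclusion that is false as written, and it needs the repair below.

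\textbf{The inclusion in (ii).} $T_a$ is only the irreducible component of $\pi_{2,\dots,n}(V_2\cap\{x_1=\Psi(a)\})$ that contains the image of your slice. The whole slice of $V_2$ need not lie in $\{x_j=b(a)\}$. For example, if $V_2=\pi_{1,j}^{-1}(C)$ with $C$ of degree $\ge 2$ in $x_j$, that slice is a union of several hyperplanes $\{x_j=b\}$. Even granting the inclusion, the Zariski closure of $\{(\Psi(a),b(a))\}$ could be all of $\C^2$, and then ``$V_2$ is contained in the cylinder'' says nothing.

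Use the containment in the other direction, as you already do in (i):
\begin{itemize}
\item $T_a$ is irreducible of dimension $n-2$ and lies in $\{x_j=b(a)\}\subset\C^{n-1}$, so it equals that hyperplane.
\item Hence $V_2\supset\pi_{1,j}^{-1}(\{(\Psi(a),b(a))\})$ for uncountably many $a$, so $V_2\supset\pi_{1,j}^{-1}(\overline{P})$, where $P=\{(\Psi(a),b(a))\}$ is infinite.
\item Since $V_2\neq\C^n$, $\overline{P}$ has dimension one. For any irreducible curve component $C$ of $\overline{P}$, the cylinder $\pi_{1,j}^{-1}(C)\subset V_2$ is irreducible of the same dimension, so $V_2=\pi_{1,j}^{-1}(C)$.
\end{itemize}
From there your appeal to Proposition~\ref{prop:bialg-curve-case} goes through.

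\textbf{The ``main obstacle'' is not an obstacle.} The admissible values of $a$ form an uncountable set. Each $T_a$ is either one of countably many $F_{n-1}$-preperiodic hypersurfaces or a hyperplane $\{x_j=b\}$ for one of finitely many indices $j$. By pigeonhole, some fixed $T$ or some fixed $j$ occurs for uncountably many $a$, so (i) and (ii) are exhaustive. You do not need Lemma~\ref{lem:form-of-invariant}, which in any case describes invariant rather than merely preperiodic varieties. The remark in (i) about contradicting dominance of $\pi_1$ is also spurious: $V_2=\C\times T$ contradicts nothing you assumed, and it is $f$-special, which is all you need.

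\textbf{Comparison with the paper.} The paper splits according to whether every proper coordinate projection $\pi_S|_{V_1}$ is dominant.
\begin{itemize}
\item If all are dominant, it runs your slicing argument. Dominance of the $\pi_{1,j}$ excludes fiber-type slices, and your outcome (i) gives $\bpsi_{f,n}(V_1')\subset\C\times W$. This contradicts dominance of $\pi_{2,\dots,n}$, so this case never occurs.
\item Otherwise $V_1=\pi_S^{-1}(W_1)$. The paper shows $V_2=\pi_S^{-1}(\pi_S(V_2))$ and finishes with the inductive hypothesis in $\C^{|S|}$.
\end{itemize}
Your route draws positive conclusions from both slicing outcomes. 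It therefore needs only single-coordinate dominance, and only the curve case for the cylinder step, at the cost of the closure argument in (ii). The paper's route makes explicit that for $n\ge 3$ a bialgebraic hypersurface is always a coordinate cylinder, and it never has to analyze fiber-type slices.
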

\begin{proof}
We will prove this by induction on $n$. The base case is $n = 2$ which is true by Proposition \ref{prop:bialg-curve-case}. Now assume that the theorem holds for $n - 1 \ge 1$ and let $V_1$ be a hypersurface in $\C^n$ with $V_2$ being the algebraic closure of $\bpsi_{f,n}(V'_1)$ where $V_1'$ is a branch of $V_1$ contained in $\D_R^n$. We break the proof into two cases: 
\begin{enumerate}
    \item For every subset $S \subsetneq \{1,\dots,n\}$, the projection $\pi_S: V_1 \lra (\bP^1)^{|S|}$ is dominant; or
    \item There is a subset $S \subsetneq \{1,\dots, n\}$ where the projection $\pi_S: V_1 \lra (\bP^1)^{|S|}$ is not dominant.
\end{enumerate}
\textbf{Case 1.} The projection of $V_1'$ onto the first $n - 1$ coordinates is dominant in this case. This allows us to find a local parametrization
\[
(x_1,\dots,x_{n-1}, u(x_1,\dots,x_{n-1})),
\]
contained in $V_1'$ where $u$ is a holomorphic function on an open polydisk of the form
\[
\{(x_1,\dots,x_{n-1}): |x_i - a_i| < \epsilon\},
\]
for some $(a_1,\dots,a_{n-1}) \in \D_R^{n-1}$ and some $\epsilon > 0$. 

Let $a \in B_\epsilon(a_1)$ and consider the slice of $V_1'$ given by
\begin{equation}
\label{eqn:form-of-v1}
V_1^{a} := \{(x_2,\dots,x_{n-1}, u(a,x_2,\dots, x_{n-1})): |x_i - a_i| < \epsilon \text{ for all } i \ge 2\}.
\end{equation}
This set is clearly $(n-2)$-dimensional and its image under $\Psi_{f,n-1}$ is contained in $$\pi_{2,\dots, n}(V_2 \cap \pi_1^{-1}(\Psi_f(a)))$$
which is also an $(n-2)$-dimensional subvariety of $\C^{n - 1}$. Hence, for all $a \in B_\epsilon(a_1)$, the branch $V_1^a$ is $f$-bialgebraic. Using the inductive hypothesis, this means that for every $a \in B_\epsilon(a_1)$
$$
\overline{\Psi_{f, n -1}(V_1^a)}^\zar
$$
is $f$-special. 

Since the projection of $V_1$ onto any subset of coordinates is dominant, we may replace $B_\epsilon(a_1)$ by a smaller subset if necessary to assume that, for any fixed $a \in B_\epsilon(a_1)$, the function
\[
u(a,x_2,\dots,x_n),
\]
is non-constant. Indeed, otherwise, the image of $V_1'$ under $\pi_{1,n}$ is contained in an algebraic curve which contradicts our assumption. 
This ensures by using equation \eqref{eqn:form-of-v1}, that $\overline{(V_1^a)}^\zar$ cannot be contained in a fiber of the projections $\pi_1,\dots, \pi_{n-1}$. Hence, $\overline{(V_1^a)}^\zar$ must be an $F_{n-1}$-preperiodic hypersurface. By \cite[Theorem 6.24]{Scanlon}, there are countably many such hypersurfaces. 
It follows that there is a fixed $F_{n-1}$-preperiodic hypersurface $W \subset \C^{n-1}$ such that $$
\Psi_{f, n - 1}(V_1^a) \subset W
$$ 
for all $a \in B_\epsilon(a_1)$. This shows that
$$
\Psi_{f,n}(V_1') \subset  \pi_{2,\dots,n}^{-1}(W).
$$  
But, this means that the projection of $V_1'$ to the last $n-1$ coordinates cannot be dominant which contradicts our assumption. So, this case cannot occur. 

% Now, note that since $W$ is $F_{n-1}$-preperiodic, $\pi_{2,\dots,n}^{-1}(W)$ is an irreducible $F_n$-preperiodic hypersurface of $\C^n$ which finishes the proof. 

\textbf{Case 2.} Assume without loss of generality that the projection $\pi_{1,\dots,r}: V_1 \lra \C^r$ is not dominant for some $r \ge 1$. Let $W_1 := \pi_{1,\dots,r}(V_1)$ which must be an irreducible hypersurface of $\C^r$ since $V_1$ is a hypersurface. Also, note that $V_1 = \pi_{1,\dots,r}^{-1}(W_1)$. We claim that $W_1' := \pi_{1,\dots,r}(V_1')$ is an $f$-bialgebraic branch of $W_1$. Let $$(u_1(x_1,\dots, x_{r-1}), \dots, u_{r}(x_1,\dots,x_{r-1}))$$ be a local chart of $W_1$ contained in the branch $W_1'$. Then, $V'_1$ admits a local parametrization
\[
(u_1(x_1,\dots, x_{r-1}), \dots, u_r(x_1,\dots, x_{r-1}), x_{r},\dots,x_{n - 1}).
\]
So, $V_2$ contains the open complex manifold $\mathcal{M}$ defined by the chart
\[
(\Psi(u_1),\dots, \Psi(u_r), \Psi(x_{r}),\dots, \Psi(x_{n-1})). 
\]
The projection $\pi_{1,\dots,r}$ restricted to this submanifold of $V_2$ clearly maps to an analytic set of dimension $r - 1$. We conclude that $\pi_{1,\dots,r}$ restricted to $V_2$ cannot be dominant. Hence, $\pi_{1,\dots,r}(V_2)$ is also a hypersurface and the claim follows from $$\bpsi_{f,r}(W_1') \subset \pi_{1,\dots, r}(V_2).$$ So, $\pi_{1,\dots,r}(V_2)$ is $f$-special by Proposition \ref{prop:bialg-curve-case}. Thus, $V_2 = \pi_{1,\dots,r}^{-1}(\pi_{1,2}(V_2))$ is also $f$-special, which finishes the proof.    
\end{proof}
We finally prove Theorem \ref{thm:classification-of-bialgebraic} when $V_1$ is an arbitrary subvariety of $\C^n$.
\begin{proof}[Proof of Theorem \ref{thm:classification-of-bialgebraic}]
% First suppose that the Julia set is disconnected. Let $V_1$ be a proper irreducible bialgebraic subvariety of $(\D_R)^n$, and let $V_2$ be the Zariski closure of $\bpsi_n(V_1)$. Since $V_1$ is bialgebraic we must have $\dim(V_2) = \dim(V_1) < n$. Then, $V_1 \times V_2$ must contain the graph $\Gamma(\bpsi_n|_{V_1})$. Since 
% \[
% \dim(\Gamma(\bpsi_n|_{V_1})^{\zar}) \le \dim(V_1 \times V_2) = 2k \le n + k,
% \]
% we conclude by Theorem \ref{thm:DAS-totally-disconnected} that $\Psi_n(V_1)$, and consequently $V_2$, are $F_n$-preperiodic. 

% We assume from this point on that $J_f$ is connected and locally connected. 

Let $V_1$ be a proper irreducible subvariety of $\C^n$ of dimension $k$. Suppose that a branch $V_1' \subset \D_R^n$ of $V_1$ is $f$-bialgebraic and let $V_2$ be the Zariski closure of $\bpsi_{f,n}(V_1)$. After permuting the coordinates if necessary, we may assume that $V_1$ is mapped dominantly onto $\C^k$ by the projection $\pi_{1,\dots,k}$. 

Since $V'_1$ is bialgebraic it follows that $\pi_{1,\dots,k,\ell}(V'_1)$ is a bialgebraic hypersurface in $\C^{k + 1}$ for every $k + 1 \le \ell \le n$. By Proposition \ref{prop:bialg-hypersurface-case} we conclude that $\pi_{1,\dots,k,\ell}(V_1)$ is special for all $k + 1 \le \ell \le n$. Let $S \subset \{k + 1,\dots, n\}$ be the subset of all indices $\ell$ such that $\pi_{1,\dots,k,\ell}(V_1)$ is contained in a fiber of $\pi_{k+1}: (\bP^1)^{k + 1} \lra \bP^1$. Then, for every $\ell \in \{k + 1,\dots,n\} \setminus S$, $\pi_{1,\dots,k,\ell}(V_2)$ is $F_{k + 1}$-preperiodic. We can choose $0 \le N < M$ such that 
\[
F_{k+1}^N(\pi_{1,\dots,k,\ell}(V_2)) = F_{k+1}^M(\pi_{1,\dots,k,\ell}(V_2)),
\]
for all $\ell \in \{k+1,\dots,n\} \setminus S$. It follows that $V_2$ is contained in a fiber of $\pi_S$ and that $\pi_{S^c}(V_2)$ satisfies
\[
F_{n-s}^N(\pi_{S^c}(V_2)) = F_{n-s}^M(\pi_{S^c}(V_2)),
\]
where $s = |S|$. Thus, $V_2$ is an $f$-special subvariety of $\C^n$. 
\end{proof}

\section{Proof of Corollary \ref{cor:shared-basin}}
\label{sec:pf-of-shared-comp}
This final section is devoted to the proof of Corollary~\ref{cor:shared-basin}. Recall that Corollary~\ref{cor:shared-basin} classifies all rational functions that share a periodic basin with a non-exceptional polynomial whose Julia set is either disconnected or admits a non-degenerate locally connected model.
\begin{proof}[Proof of Corollary \ref{cor:shared-basin}]
Suppose that $r$ is a rational function for which $B_\infty(f)$ is a Fatou component and such that
\[
r^b(B_\infty(f)) = B_\infty(f).
\]
for some $b \ge 1$. After replacing $r$ with $r^b$ we may assume that
\[
r(B_\infty(f)) = B_\infty(f).
\]
Then
\[
\tilde r := \Phi_f \circ r \circ \Psi_f
\]
defines a finite proper self-map of the disk $\D$ that sends $\partial \D$ to itself. Hence $\tilde r$ is a finite Blaschke product. It follows that the algebraic curve
\[
\{(x,y)\in \D^2 : y=\tilde r(x)\}
\]
is mapped by $(\Psi_f,\Psi_f)$ onto the curve
\[
\{(x,y)\in \C^2 : y=r(x)\},
\]
and is therefore $f$-bialgebraic. By Theorem~\ref{thm:classification-of-bialgebraic}, the curve
\[
\{(x,y)\in \C^2 : y=r(x)\}
\]
is $(f,f)$-preperiodic. Thus, for some $n \ge 0$, the curve parametrized by
\[
\bigl(f^n(z),\, f^n(r(z))\bigr)
\]
is periodic. By the classification theorem of Medvedev and Scanlon \cite[Theorem~6.24]{Scanlon}, it follows that
\[
f^n \circ r = g \circ f^n
\]
for some polynomial $g$ commuting with an iterate of $f$. In particular, this implies that
\[
B_\infty(g)=B_\infty(f).
\]
Moreover, the relation
\[
f^n \circ r = g \circ f^n
\]
shows that $r$ must in fact be a polynomial, and that
\[
B_\infty(r)=B_\infty(g)=B_\infty(f).
\]
Consequently,
\[
J_r=J_g=J_f.
\]
The desired conclusion now follows from \cite[Theorem~1]{atela} (see also \cite{schmidt-steinmetz}).
\end{proof}
\bibliography{bibfile}
\bibliographystyle{alpha}
\end{document}